\documentclass[10pt]{amsart}
\usepackage{latexsym}
\usepackage{amsmath}
\usepackage{amssymb}
\usepackage{mathrsfs}
\usepackage{graphicx}
\usepackage{color}
\usepackage{pgfpages}
\usepackage{ifthen}
\usepackage{leftidx,tensor}
\usepackage[T1]{fontenc}
\usepackage[latin1]{inputenc}
\usepackage{mathtools}
\usepackage{comment}
\usepackage{dsfont}
\usepackage{wasysym}
\usepackage[compress,sort]{cite}
\usepackage[foot]{amsaddr}

\usepackage[shortlabels]{enumitem}
\usepackage{aliascnt}
\usepackage{esint}
\usepackage[bookmarks=true,pdfstartview=FitH, pdfborder={0 0 0}, colorlinks=true,citecolor=red, linkcolor=blue]{hyperref}
\usepackage{bbm}
\usepackage{nicefrac}

\theoremstyle{plain}
\newtheorem{thm}{Theorem}[section]
\newaliascnt{cor}{thm}
\newaliascnt{prop}{thm}
\newaliascnt{lem}{thm}
\newtheorem{cor}[cor]{Corollary}
\newtheorem{prop}[prop]{Proposition}
\newtheorem{lem}[lem]{Lemma}
\aliascntresetthe{cor}
\aliascntresetthe{prop}
\aliascntresetthe{lem} 
\theoremstyle{definition}
\newaliascnt{defn}{thm}
\newaliascnt{asu}{thm}
\newaliascnt{con}{thm}
\aliascntresetthe{defn}
\aliascntresetthe{asu}
\aliascntresetthe{con}
\newcounter{stp}
\newcounter{stpi}
\newcounter{stpci}
\newcounter{stpiii}

\theoremstyle{thm}
\newaliascnt{rem}{thm}
\newaliascnt{exa}{thm}
\newaliascnt{masu}{thm}
\newaliascnt{nota}{thm}
\newaliascnt{sett}{thm}
\newtheorem{rem}[rem]{Remark}

\aliascntresetthe{rem}
\aliascntresetthe{exa}
\aliascntresetthe{masu}
\aliascntresetthe{nota}
\aliascntresetthe{sett}
\numberwithin{equation}{section}

\setlist[enumerate]{font = \normalfont}

	\newcommand{\N}{\mathbb{N}}

	\newcommand{\R}{\mathbb{R}}
	\newcommand{\C}{\mathbb{C}}
	
	\newcommand{\rC}{\mathrm{C}}
	\newcommand{\rL}{\mathrm{L}}
	\newcommand{\rW}{\mathrm{W}}
	\newcommand{\rH}{\mathrm{H}}
	\newcommand{\rB}{\mathrm{B}}

	\newcommand{\rQ}{\mathrm{Q}}

	\renewcommand{\div}{\mathrm{div} \,}

    \renewcommand{\Re}{\mathrm{Re}}
	
	\title{The $\mathrm{L}^1$-Stokes Semigroup}
	\author[Tim Binz \& Anatole Gaudin]{Tim Binz$^\dagger$ \& Anatole Gaudin$^{\ast}$}
	\address{$^\dagger$ TU Darmstadt\\
		Department of Mathematics \\
        Schlossgartenstr. 7 \\
        64289 Darmstadt \\ Germany.
		}
	\email{binz@mathematik.tu-darmstadt.de}
    
	\address{$^\ast$ Universit\"{a}t Duisburg-Essen\\
		Fakult\"{a}t f\"{u}r Mathematik \\
		Thea-Leymann-Stra{\ss}e 9 \\
		45127 Essen \\ Germany.
		}
	\email{anatole.gaudin@uni-due.de}

	\subjclass{35Q35, 35K90}%
	\keywords{Stokes semigroup, Helmholtz decomposition, resolvent estimates, sun-dual semigroup}
	
\begin{document}

\begin{abstract}
    We study the Stokes operator with no-slip boundary conditions on the spaces $\rL^1_{\sigma,n}(\Omega)$ and $\nicefrac{\rL^1(\Omega,\C^d)}{\nabla \rW^{1,1}(\Omega,\C)}$, where $\Omega\subset\R^d$ is an arbitrary bounded $\rC^{1,\alpha}$-domain. We show that the Stokes operator on $\rL^1_{\sigma,n}(\Omega)$ does not generate a $\rC_0$-semigroup, even though the resolvent problem is uniquely solvable. In stark contrast, its realization on $\nicefrac{\rL^1(\Omega,\C^d)}{\nabla \rW^{1,1}(\Omega)}$ generates a compact, analytic $\rC_0$-semigroup, which leaves $\rL^1_{\sigma,n}(\Omega)$ invariant.

The key point is that these two realizations, which are canonically identified for $1<p<\infty$ through the Helmholtz decomposition, cease to be equivalent at the endpoint $p=1$. This leads to genuinely different functional analytic properties. Our result provides the first positive generation theorem for the Stokes operator with no-slip boundary conditions in a pure $\rL^1$-setting on a bounded domain and settles a problem that had remained open for nearly fifty years; see, e.g., \cite{Koz:01,DHP:01}. In this sense, it completes the theory of the Stokes semigroup across the full scale of solenoidal Lebesgue spaces on (smooth) bounded domains.  As an intermediate step, some results on the space of Radon measures are obtained.

The proof combines the sun-dual construction with a precise analysis of the failure of the Helmholtz decomposition in $\rL^1$, the celebrated result of Abe and Giga \cite{AG:12} on the Stokes semigroup on $\rC_{\sigma,0}(\Omega)$ and the regularity theory refinements for the Stokes operator recently developed by Breit and the second author \cite{BG:25}.

\end{abstract} 
	 
	\maketitle		

\section{Introduction}
\label{sec:intro}
%

On bounded domains, the generation of an analytic $\rC_0$-semigroup by the Stokes operator on the solenoidal vector fields $\rL^p_{\sigma,n}(\Omega)$ is a classical problem. For $p \in (1,\infty)$, it was established independently by Giga \cite{Gig:81} and Solonnikov \cite{Sol:77}.

These results were later extended to unbounded domains by Farwig, Kozono, and Sohr \cite{FKS:05}. 
In this setting, the failure of the Helmholtz decomposition in $\rL^p(\Omega)$ requires replacing $\rL^p(\Omega)$ by $\rL^p(\Omega)\cap \rL^2(\Omega)$ for $p\in(2,\infty)$, and by $\rL^p(\Omega)+\rL^2(\Omega)$ for $p\in(1,2)$. 
 Following these breakthrough results, the investigation of the functional-analytic properties of the Stokes operator on $\rL^p$-spaces emerged as an active research field, see e.g. \cite{Uka:87,Gig:85,GS:91,NS:03,KKW:06,MM:08,KW:13,KW:17,AG:14,FKS:07,FKS:09,She:12,Tol:17,Tol:20,GT:22,GS:25a,GHHSS:10}.

\smallskip 
For the endpoint cases $p \in \{1,\infty\}$, the situation is considerably more delicate due to the absence of a Helmholtz decomposition. In particular, the case $p=\infty$ remained open for nearly forty years. In their celebrated work, Abe and Giga \cite{AG:12} proved that the Stokes operator generates an analytic $\rC_0$-semigroup on $\rC_{\sigma,0}(\Omega)$ and, as a consequence, an analytic but non-$\rC_0$-semigroup on $\rL^\infty_{\sigma,n}(\Omega)$ for bounded $\rC^3$-domains.

An alternative proof was later provided by Abe, Giga, and Hieber \cite{AGH:13}. Very recently, Geng and Shen \cite{GS:25} extended the result to bounded $\rC^1$-domains, and Lipschitz domains if $d=2$.
For subsequent work about the Stokes semigroup on $\rL^\infty_{\sigma,n}(\Omega)$ see e.g. \cite{Abe:14,Abe:16,Abe:20,Abe:21,BvB:16,BH:15}.

\smallskip 

For $p=1$, much less is known. In \cite{DHP:01}, Desch, Hieber, and Pr\"{u}ss showed that, in the case of the half-space~$\R^d_+$, the Stokes operator does not generate an analytic $\rC_0$-semigroup on $\rL^1_{\sigma,n}(\R^d_+)$. In \cite{Saa:07} this result was extended to Robin boundary conditions. 
In \cite{BvB:16} von Below and Bolkart proved that, in the case of an infinite later $\R^{d-1} \times (0,\delta)$, the Stokes operator does generate an analytic $\rC_0$-semigroup on $\rL^1_{\sigma,n}(\R \times (0,\delta))$, whereas for $d \geq 3$ it does not.

Because of its more favourable behaviour with respect to singular integral operators, the space $\rL^1$ has often been replaced by the Hardy space $\rH^1$. In this setting, the Hardy space $\rH^1$ has been used successfully as a surrogate for $\rL^1$-type decay semigroup estimates by Miyakawa \cite{Miy:96}, Kozono \cite{Koz:98, Koz:01}, and Giga, Matsui, and Shimizu \cite{GMS:99}. For another application of $\rL^1$ decay estimates, see also the work of Maremonti \cite{Mare:11}. 
For an exhaustive review of the Stokes operator we refer to the survey of Hieber and Saal \cite{HS:18} and the recent article of Breit and the second author \cite{BG:25}.

Nevertheless, nearly fifty years after the groundbreaking works of Giga \cite{Gig:81} and Solonnikov \cite{Sol:77}, it still  remained a long-standing open problem, whether the Stokes operator on $\rL^1_{\sigma,n}(\Omega)$, provided $\Omega$ is a bounded domain, generates an analytic $\rC_0$-semigroup, see for instance \cite{Koz:98} and \cite[Remark 5.2]{DHP:01}. The following result provides a negative answer to this question.

\begin{thm}[{Simplified version of \autoref{cor:Stokes Operator on L1} and \autoref{cor:L_1-stokes}}]\label{cor:main}
	Let $\Omega \subset \R^d$ be a bounded $\rC^{1,\alpha}$-domain, $\alpha>0$.
	The Stokes operator $A \colon D_1(A) \subset \rL^1_{\sigma,n}(\Omega) \to \rL^1_{\sigma,n}(\Omega)$ with no-slip boundary conditions defined by
	\begin{equation*}
		A u =- \Delta u + \nabla \pi,  \quad \text{ with domain } \quad D_1(A) = \{ u \in \rW^{1,1}_0(\Omega) \cap \rL^1_{\sigma,n}(\Omega) \ \colon \exists \pi\in\rL^1(\Omega),\ \mathrm{ s.t. }\ \Delta u - \nabla \pi \in  \rL^1_{\sigma,n}(\Omega) \}
	\end{equation*}
	is densely defined, closed and invertible with
    \begin{align*}
        \sigma(A)\subset(0,\infty)
    \end{align*}
    but is \textbf{not} the (negative) generator of a $\rC_0$-semigroup on  $\rL^1_{\sigma,n}(\Omega)$. 
\end{thm}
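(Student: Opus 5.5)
We prove the positive statements (density, closedness, invertibility, $\sigma(A)\subset(0,\infty)$) and the negative one (no $\rC_0$-semigroup) separately. Throughout we use duality with the Stokes operator $A_\infty$ on $\rC_{\sigma,0}(\Omega)$ (Abe--Giga, extended to $\rC^1$-domains by Geng--Shen). By that theory, $A_\infty$ generates an analytic $\rC_0$-semigroup, has compact resolvent, and satisfies $\sigma(A_\infty)\subset(0,\infty)$.

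For the positive properties, density of $D_1(A)$ follows because $\rC^\infty_{c,\sigma}(\Omega)\subset D_1(A)$ is dense in $\rL^1_{\sigma,n}(\Omega)$. For invertibility and the spectrum, fix $\lambda\in\C\setminus(0,\infty)$ and $f\in\rL^1_{\sigma,n}(\Omega)$. We solve $\lambda u-\Delta u+\nabla\pi=f$, $\div u=0$, $u|_{\partial\Omega}=0$ by approximating $f$ with smooth solenoidal $f_k$. The $\rL^p$-theory for $p>1$ gives solutions $u_k$. To bound them, we use the $\rW^{1,1}$-type a priori estimates of Breit--Gaudin for the Stokes resolvent with $\rL^1$ data, which hold on $\rC^{1,\alpha}$-domains:
\[
\|u\|_{\rW^{1,1}}+\|\pi\|_{\rL^1}\le C_\lambda\|f\|_{\rL^1}.
\]
This estimate is valid for fixed $\lambda$ and degenerates as $|\lambda|\to\infty$; obtaining it is obtained by duality with $\rW^{1,\infty}$/BMO-type estimates for the dual problem. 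Passing to the limit $k\to\infty$ yields a solution in $D_1(A)$. Uniqueness follows by testing against solutions of the dual problem in $\rC_{\sigma,0}$, whose existence is given by Abe--Giga. Closedness then follows from the bounded inverse. The spectrum is discrete and coincides with that of $A_\infty$ by compactness and duality: $\rL^1_{\sigma,n}(\Omega)$ embeds into $(\rC_{\sigma,0}(\Omega))'$, and the eigenfunctions in both settings are smooth. This gives $\sigma(A)\subset(0,\infty)$.

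For the negative part, suppose that $-A$ generates a $\rC_0$-semigroup $T$ on $\rL^1_{\sigma,n}(\Omega)$. We first identify $T(t)$ with the restriction of the adjoint semigroup $e^{-tA_\infty}{}'$, which acts on $\rC_{\sigma,0}(\Omega)'$, a quotient space of measures. The two semigroups have compatible resolvents, so they agree on $\rL^1_{\sigma,n}$. Hence the adjoint semigroup would leave $\rL^1_{\sigma,n}$ invariant and be strongly continuous there. We derive a contradiction by exhibiting some $f\in\rL^1_{\sigma,n}$ such that $e^{-tA_\infty}{}'f\notin \rL^1_{\sigma,n}$ for small $t>0$; alternatively, we show that the Hille--Yosida bound $\|\lambda(\lambda+A)^{-1}\|_{\mathcal L(\rL^1_\sigma)}\le M$ fails as $\lambda\to\infty$. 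The mechanism is that $\rL^1_{\sigma,n}$ is not the dual-natural space. The semigroup is well behaved on $\rL^1/\nabla\rW^{1,1}$, but projecting back to a solenoidal representative requires the Helmholtz projection, which is unbounded on $\rL^1$.

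Concretely, we take data $f$ concentrating near a boundary point $x_0$, namely $f$ approximating a boundary-layer-type solenoidal field. We flatten the boundary locally using the $\rC^{1,\alpha}$-regularity and compare with the half-space, where Desch--Hieber--Pr\"uss computed that the pressure term $\nabla\pi$ produced by the resolvent creates a logarithmic divergence in $\rL^1$. After scaling, this yields $\|\lambda(\lambda+A)^{-1}f_\lambda\|_{\rL^1}\gtrsim\log\lambda\,\|f_\lambda\|_{\rL^1}$. This localization and perturbation step is the main obstacle: the lower-order errors must be controlled by $o(\log\lambda)$, which we plan to achieve via the Breit--Gaudin regularity estimates with explicit $\lambda$-dependence. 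If the contradiction cannot be forced directly this way, the fallback is to use the sun-dual: a $\rC_0$-semigroup on $\rL^1_{\sigma,n}$ would force the sun-dual of $A_\infty$ to equal $\rL^1_{\sigma,n}$. Instead, the sun-dual is shown to be $\rL^1/\nabla\rW^{1,1}$, which is not isomorphic to $\rL^1_{\sigma,n}$ via the canonical map.
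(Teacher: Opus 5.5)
\textbf{The positive half is fine; the non-generation half has a genuine gap.}

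\textbf{Positive half.} This follows the paper's route. The paper Laplace-transforms the Breit--Gaudin bounds $\|\mathrm{e}^{-zA}u\|_{\rW^{s,1}}\lesssim |z|^{-s/2}\mathrm{e}^{-c\Re z}\|u\|_{\rL^1}$ into $(1+|\lambda|)^{1-\varepsilon/2}\|u\|_{\rL^1}+\dots\lesssim\|f\|_{\rL^1}$. It then takes $A$ to be the closure of the $\rL^2$-operator, and gets the spectrum from the compact resolvent and self-adjointness on $\rL^2_{\sigma,n}(\Omega)$. One sign slip: you solve $(\lambda+A)u=f$, so to obtain $\sigma(A)\subset(0,\infty)$ you must treat all $\lambda\in\C\setminus(-\infty,0)$, not $\lambda\in\C\setminus(0,\infty)$.

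\textbf{Non-generation half.} None of your three routes closes.
\begin{enumerate}[(a)]
\item No $f\in\rL^1_{\sigma,n}(\Omega)$ with $T'(t)f\notin\rL^1_{\sigma,n}(\Omega)$ can exist. The Breit--Gaudin estimate shows that $\mathrm{e}^{-zA}$ maps $\rL^1_{\sigma,n}(\Omega)$ into $\rW^{s,1}\cap\rL^1_{\sigma,n}(\Omega)$, so the dual semigroup \emph{does} leave $\iota(\rL^1_{\sigma,n}(\Omega))$ invariant. Strong continuity already holds on the dense subspace $\rL^2_{\sigma,n}(\Omega)$, so what fails is uniform $\rL^1$-boundedness as $t\downarrow 0$, not invariance.
\item The sun-dual fallback has no content. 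Generation on $\rL^1_{\sigma,n}(\Omega)$ only gives $\iota(\rL^1_{\sigma,n}(\Omega))\subset X^{\odot}$, which holds anyway, and nothing forces equality. The same reasoning would ``disprove'' generation on $\rL^p_{\sigma,n}(\Omega)$, $p\in(1,\infty)$. That space sits in $\nicefrac{\rL^1(\Omega,\C^d)}{\nabla\rW^{1,1}(\Omega)}$ in exactly the same way: continuously, injectively, densely, invariantly, and not surjectively.
\item The direct lower bound $\gtrsim\log\lambda$ stops exactly at its crux. The localization and flattening errors, including the nonlocal pressure term $\pi\nabla\chi$, would have to be pushed through a half-space resolvent that is itself unbounded on $\rL^1$. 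Meanwhile the only available $\rL^1$ bounds lose a factor $|\lambda|^{\varepsilon/2}$. So it is not clear these errors are $o(\log\lambda)$.
\end{enumerate}

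\textbf{The missing idea.} Argue by contradiction, so that only the \emph{assumed upper bound} has to be transported, and let compactness replace all quantitative error control. This is what the paper does.
\begin{enumerate}[1.]
\item Assume $\lambda\|R(\lambda,-A)f\|_{\rL^1}\leqslant C\|f\|_{\rL^1}$ for $\lambda\geqslant\lambda_0$.
\item Fix $x_0\in\partial\Omega$, $h\in\rC^\infty_{\sigma,c}(\R^d_+)$ and $\mu>0$. Set $f_r=r^{-d-2}\rQ h(\rQ^{\mathsf T}(\cdot-x_0)/r)$ and $\lambda_r=\mu/r^2$. Rescale $u_r=R(\lambda_r,-A)f_r$ back to $v_r$ on $\Omega_r=\{y: x_0+r\rQ y\in\Omega\}$. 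This gives $\mu\|v_r\|_{\rL^1(\Omega_r)}\leqslant C\|h\|_{\rL^1}$ uniformly in $r$.
\item The $\rL^2$ energy identity bounds the zero extensions of $v_r$ uniformly in $\rH^1(\R^d)$. A subsequence converges weakly in $\rH^1$ and almost everywhere.
\item The limit vanishes on $\{y_d<0\}$, is divergence free, and satisfies the weak resolvent equation on $\R^d_+$. Hence it equals $R(\mu,-A_+)h$.
\item Fatou's lemma yields $\mu\|R(\mu,-A_+)h\|_{\rL^1(\R^d_+)}\leqslant C\|h\|_{\rL^1}$. This extends by density to all of $\rL^1_{\sigma,n}(\R^d_+)$, with limits identified via $\rL^1\to\rL^q$ bounds for some $q>1$.
\item This contradicts Theorem~5.1 of Desch--Hieber--Pr\"uss.
\end{enumerate}
No logarithmic lower bound and no perturbation estimate on $\Omega$ are ever needed.
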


The main obstruction is the failure of the Helmholtz decomposition in $\rL^1(\Omega,\C^d)$. 
Indeed, in $\rL^p(\Omega,\C^d)$ with $p \in (1,\infty)$ the Helmholtz decomposition is equivalent to the solvability of the weak Neumann problem \cite{SS:92, Gal:11} and, provided $\Omega$ has a smooth enough boundary, the latter one implies bounded $\rH^\infty$-calculus \cite{GK:15} and maximal regularity \cite{GHHS:12} for the Stokes operator on $\rL^p_{\sigma,n}(\Omega)$, and subsequently generation of an analytic $\rC_0$-semigroup.
We therefore briefly recall how the Helmholtz decomposition in $\rL^p(\Omega,\C^d)$ for $p \in (1,\infty)$ is commonly used, often implicitly, to identify different realizations of the Stokes operator. 
%
Starting from the expression
\begin{equation*}
	A u = -\Delta u + \nabla \pi
\end{equation*}
for a pressure term $\pi \in \rW^{1,p}(\Omega)$, we have to restrict it to a subset of $\rL^p(\Omega,\C^d)$ in order to handle the pressure term. One deals with $u$ ``up to the gradient part'', i.e. one considers 
\begin{equation*}
	A (u + \nabla p) = -\Delta u + \nabla \widetilde{\pi} ,
\end{equation*}
where $\nabla\widetilde{\pi}=\nabla[(-\Delta)p + \pi]$ or more precisely
\begin{equation*}
	\tilde{A} [u] = [-\Delta u + \nabla \pi], 
\end{equation*}
where $[ u ] = u + \nabla \rW^{1,p}(\Omega)$ with domain
\begin{equation*}
	D_{{p}}(\tilde{A}) = \{ [u] \in \nicefrac{\rL^p(\Omega,\C^d)}{\nabla \rW^{1,p}(\Omega)} \colon \exists (\widetilde{u},\pi)\in[u]\times\rL^{p}(\Omega) \ \mathrm{ s.t. }\ \widetilde{u}\in\rW^{1,p}_{\sigma,0}(\Omega,\C^d) \ \mathrm{ and }\ \Delta \widetilde{u} - \nabla \pi \in\rL^p(\Omega,\C^d) \} .
\end{equation*}
Strictly speaking this Stokes operator $\tilde{A}$ is an operator on the quotient space $\nicefrac{\rL^p(\Omega,\C^d)}{\nabla \rW^{1,p}(\Omega)}$. The Helmholtz decomposition
\begin{equation}
	\rL^p(\Omega,\C^d) = \rL^p_{\sigma,n}(\Omega) \oplus \nabla \rW^{1,p}(\Omega)
	\label{eq:helmholtz decomposition}
\end{equation}
implies that the quotient space identification
\begin{equation}\label{eq:Helmholtz quotient isomorphism}
    \nicefrac{\rL^p(\Omega,\C^d)}{\nabla \rW^{1,p}(\Omega)} \cong \rL^p_{\sigma,n}(\Omega,\C^d)
\end{equation}
i.e., it can be canonically identified with the solenoidal vector-field. In this way one obtains an operator $A \colon D_p(A) \subset \rL^p_{\sigma,n}(\Omega,\C^d) \to \rL^p_{\sigma,n}(\Omega,\C^d)$ given by
\begin{equation}
	A u = -\mathbb{P} \Delta u \label{Stokes Wrong definition}
\end{equation}
where $\mathbb{P} \colon \rL^p(\Omega,\C^d) \to \rL^p_{\sigma,n}(\Omega,\C^d)$ denotes the Leray projection, i.e. the projection to the first summand in the Helmholtz decomposition \eqref{eq:helmholtz decomposition}. Here it matters that $\Delta$, subject to prescribed boundary conditions, does not commute with gradients on $\Omega$ due to the no-slip boundary conditions. Thus, if $\Omega$ has smooth boundary, $-\mathbb{P} \Delta u$ selects the solenoidal representative of the class $[-\Delta u]$.

\smallskip 

This is well-defined and independent of the choice of representatives for both $[u]$ and $\tilde{A}[u]=[-\Delta u+\nabla \pi]$. Indeed since $\rL^{p'}_{\sigma,n}(\Omega)=\overline{\rC^\infty_{\sigma,c}(\Omega)}^{\lVert \cdot\rVert_{\rL^{p'}(\Omega)}}$, and $\rL^{p'}_{\sigma,n}(\Omega)'\cong \nicefrac{\rL^p(\Omega,\C^d)}{\nabla \rW^{1,p}(\Omega)}$, $p\in(1,\infty)$, if $p_j,\pi_j\in\mathcal{D}'(\Omega)$, $j=0,1$, are such that $u+\nabla p_j \in\rW^{1,p}_{\sigma,0}(\Omega,\C^d)$, $\Delta ({u}+\nabla p_j) - \nabla \pi_j \in\rL^p(\Omega,\C^d)$, we can use $\rC^\infty_{\sigma,c}(\Omega)$ as the space of test functions and
\begin{align*}
     \langle -\Delta ({u}+\nabla p_0) + \nabla \pi_0, \varphi \rangle =  \langle -\Delta {u}, \varphi \rangle=  \langle -\Delta ({u}+\nabla p_1) + \nabla \pi_1, \varphi \rangle 
\end{align*}
so that it is independent of choices of representatives and  taking the supremum over $\varphi \in \rC^\infty_{\sigma,c}(\Omega)$ with $\|\varphi\|_{\rL^{p'}(\Omega)}\leqslant 1$:
\begin{align*}
   \lVert \tilde{A}[u]\rVert_{\nicefrac{\rL^p(\Omega,\C^d)}{\nabla \rW^{1,p}(\Omega)}}&=\lVert [-\Delta ({u}+\nabla p_0) + \nabla \pi_0]\rVert_{\nicefrac{\rL^p(\Omega,\C^d)}{\nabla \rW^{1,p}(\Omega)}}\\&=\lVert [-\Delta ({u}+\nabla p_1) + \nabla \pi_1]\rVert_{\nicefrac{\rL^p(\Omega,\C^d)}{\nabla \rW^{1,p}(\Omega)}}.
\end{align*}
In general, unless one can systematically select a representative $u\in\rW^{2,p}(\Omega,\C^d)$, nothing ensures  that $[-\Delta {u}]\in\nicefrac{\rL^p(\Omega,\C^d)}{\nabla \rW^{1,p}(\Omega)}$   while $[-\Delta ({u}+\nabla p) + \nabla \pi]=[-\Delta u]$ in $\rC^\infty_{\sigma,c}(\Omega)'$. This is due  to the fact that, \textit{a priori}, one only has $\nabla(-\Delta p+\pi)\in[\rW^{-2,p}+\rW^{-1,p}](\Omega,\C^d)\not\subset \rL^p(\Omega,\C^d)$. This is also for the exact same reason as why one cannot usually write the Stokes operator as \eqref{Stokes Wrong definition} when the domain $\Omega$ does not have a smooth enough boundary regularity.

\smallskip
\

However, this entire construction, arising from the identification \eqref{eq:Helmholtz quotient isomorphism}, breaks down for $p = 1$ due to the failure of the Helmholtz decomposition, see \cite{BB:02},
\begin{equation}
	\rL^1(\Omega,\C^d) \not =  \rL^1_{\sigma,n}(\Omega) \oplus \nabla \rW^{1,1}(\Omega) .
	\label{eq:no helmholtz decomposition L1}
\end{equation}
Therefore, we are left with two distinct Stokes operators $A \colon D_1(A) \subset \rL^1_{\sigma,n}(\Omega) \to \rL^1_{\sigma,n}(\Omega)$ and $\tilde{A} \colon D_{1}(\tilde{A}) \subset \nicefrac{\rL^1(\Omega,\C^d)}{\nabla \rW^{1,1}(\Omega)}
\to \nicefrac{\rL^1(\Omega,\C^d)}{\nabla \rW^{1,1}(\Omega)}$. The non-existence of a bounded Leray projection on $\rL^1(\Omega,\C^d)$ is already a red flag for the definition of the first operator and ultimately the reason why it cannot generate a $\rC_0$-semigroup. 
Surprisingly, the quotient  realization  of the Stokes operator  is much better behaved.  

\begin{thm}[Simplified version of {\autoref{thm:sun-stokes}}]\label{thm:main}
	Let $\Omega \subset \R^d$ be a bounded $\rC^{1,\alpha}$-domain, $\alpha > 0$. 
	The Stokes operator $\tilde{A} \colon D_{1}(\tilde{A}) \subset \nicefrac{\rL^1(\Omega,\C^d)}{\nabla \rW^{1,1}(\Omega)} \to \nicefrac{\rL^1(\Omega,\C^d)}{\nabla \rW^{1,1}(\Omega)}$  with no-slip boundary conditions defined~by
	\begin{align*}
		\tilde{A} [u] &= [-\Delta u+\nabla \pi], 
		\quad \text{ with domain } \\
        D_{1}(\tilde{A}) &= \{ [u],\, u\in \rW^{1,1}_{0}(\Omega) \cap \rL^1_{\sigma,n}(\Omega) \colon \exists \pi\in\rL^1(\Omega)\, \textrm{s.t.}\, [\Delta u-\nabla \pi] \in  \nicefrac{\rL^1(\Omega,\C^d)}{\nabla \rW^{1,1}(\Omega)} \}
	\end{align*}
	is the (negative) generator of a bounded, compact and analytic $\rC_0$-semigroup $(\tilde{T}(t))_{t\geqslant 0}$ of angle $\frac{\pi}{2}$ on $\nicefrac{\rL^1(\Omega,\C^d)}{\nabla \rW^{1,1}(\Omega)}$. Moreover, we have the continuous embedding $\iota \colon \rL^1_{\sigma,n}(\Omega) \hookrightarrow \nicefrac{\rL^1(\Omega,\C^d)}{\nabla \rW^{1,1}(\Omega)}$ with dense range and the semigroup $(\tilde{T}(t))_{t \geqslant 0}$ leaves $\iota (\rL^1_{\sigma,n}(\Omega))$ invariant.
\end{thm}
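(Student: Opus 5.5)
The plan is a duality (sun-dual) argument built on the Abe--Giga semigroup on $\rC_{\sigma,0}(\Omega)$. Let $S(t)$ denote the analytic $\rC_0$-semigroup on $X_0:=\rC_{\sigma,0}(\Omega)$ generated by the negative Stokes operator $-B$. For bounded $\rC^{1,\alpha}$-domains this is available through Geng--Shen / Abe--Giga, with the regularity refinements of \cite{BG:25}. By Riesz representation and Hahn--Banach, $X_0'$ is a quotient of the Radon measures $\mathcal{M}(\overline{\Omega},\C^d)$ by the annihilator of $X_0$, so $S(t)'$ is a weak$^*$-continuous analytic semigroup on this measure quotient. The sun-dual $X_0^\odot:=\overline{D(B')}$ is the maximal subspace on which $S(t)'$ is strongly continuous, and $-B^\odot$ generates an analytic $\rC_0$-semigroup there. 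Analyticity of angle $\frac{\pi}{2}$ and boundedness follow from the corresponding sectoriality of $B$ on $X_0$ (spectrum in $(0,\infty)$, resolvent bounds on every sector) together with $\|R(\lambda,B')\|=\|R(\lambda,B)\|$. Compactness follows from compactness of $R(\lambda,B)$, via Schauder's theorem and analyticity.

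The core step is to identify $X_0^\odot$ with $\nicefrac{\rL^1(\Omega,\C^d)}{\nabla \rW^{1,1}(\Omega)}$ and $B^\odot$ with $\tilde A$. I would proceed as follows.

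\begin{enumerate}
\item Compute the annihilator of $X_0$ in $\mathcal{M}$. Using that $X_0=\overline{\rC^\infty_{\sigma,c}(\Omega)}^{\|\cdot\|_\infty}$, it should consist of measures whose restriction to $\Omega$ is a distributional gradient, plus anything supported on $\partial\Omega$. Consequently $\rL^1(\Omega,\C^d)/(\rL^1\cap X_0^\perp)$ embeds isometrically into $X_0'$.
\item Identify $\rL^1\cap X_0^{\perp}$ with $\nabla\rW^{1,1}(\Omega)$, i.e.\ show that an $\rL^1$ field annihilating $\rC^\infty_{\sigma,c}$ is $\nabla p$ with $p\in\rW^{1,1}$. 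This is de Rham's theorem with $\rL^1$ regularity, on a bounded Lipschitz domain, via Bogovski\u{\i}-type arguments.
\item Show $D(B')\subset$ this $\rL^1$ quotient and that it is dense there. For $\mu\in X_0'$ one has $R(\lambda,B')\mu=R(\lambda,B)'\mu$. Using the kernel or regularity estimates of \cite{BG:25} for the Stokes resolvent, the adjoint maps measures into $\rW^{1,1}_0$-type solenoidal fields, modulo gradients. This gives $D(B')\subset \iota$-type quotient classes of $\rW^{1,1}_0\cap\rL^1_{\sigma,n}$. Density of $\rC^\infty_{\sigma,c}$ classes in the $\rL^1$ quotient then yields $X_0^\odot=\nicefrac{\rL^1}{\nabla\rW^{1,1}}$.
\item Identify $B^\odot=\tilde A$ by testing against $\rC^\infty_{\sigma,c}$ as in the introduction, which gives $D(B^\odot)=D_1(\tilde A)$. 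This uses that any resolvent of one is a resolvent of the other, plus uniqueness of weak solutions.
\end{enumerate}

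For the remaining claims, the map $\iota\colon u\mapsto[u]$ is contractive. It is injective since a solenoidal field with vanishing normal trace that is a gradient $\nabla p$ gives a weak Neumann problem with zero data, so $p$ is constant. Its range is dense since it contains $\rC^\infty_{\sigma,c}$. For invariance, for $t>0$ analyticity gives $\tilde T(t)[f]\in D_1(\tilde A)$, whose elements by definition have representatives in $\rW^{1,1}_0\cap\rL^1_{\sigma,n}$. Hence $\tilde T(t)$ maps everything into $\iota(\rL^1_{\sigma,n})$.

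The main obstacle is step 3: the regularity showing that the dual resolvent sends measures, or at least $\rL^1$ classes, to $\rW^{1,1}$ solenoidal representatives with controlled pressure in $\rL^1$. This requires $\rW^{1,q}$ estimates for $q<d'$ by duality with $\rW^{-1,q'}$ and $\rC^{1,\alpha}$ boundary, and I would rely on \cite{BG:25} there.
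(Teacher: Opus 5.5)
Your route is essentially the paper's. It takes the sun-dual of the Abe--Giga/Geng--Shen semigroup on $X_0=\rC_{\sigma,0}(\Omega)$ and describes $X_0'$ as measures modulo gradients via Hahn--Banach and de Rham. It places $D(B')$ inside classes of $\rW^{1,q}_{\sigma,0}(\Omega)$, $q$ close to $1$, by dualizing the regularity theory of \cite{BG:25}; this is the paper's \eqref{eq:domain embeddings}. It then uses density of $\rC^\infty_{\sigma,c}(\Omega)$, which lies in $D(B')$ with $B'[\varphi]=[-\Delta\varphi]$, identifies $B^\odot$ with $\tilde A$, and transfers boundedness, compactness and analyticity.

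The one genuinely different step is the invariance of $\iota(\rL^1_{\sigma,n}(\Omega))$. The paper takes it from the $\rL^1$-smoothing estimates of \autoref{lem:almost L1}. You instead observe that for $t>0$ the analytic semigroup maps the whole quotient into $D(B^\odot)\subset D(B')$, whose classes have solenoidal $\rW^{1,1}_0$ representatives. This argument is correct and needs nothing beyond the domain inclusion you already prove. It also gives more, namely smoothing of the entire quotient into $\iota(\rL^1_{\sigma,n}(\Omega))$. The paper's route additionally ties the restricted semigroup to the $\rL^1$-semigroup of \cite{BG:25} and its $\rW^{s,1}$ bounds, which the statement does not require.

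The step you pass over too quickly is the isometry claimed in your step 1; it does not follow from the annihilator computation. For $u\in\rL^1(\Omega,\C^d)$, the $X_0'$-norm of $[u]$ is an infimum over $\pi\in\mathrm{BV}(\Omega)$ and boundary measures. The $\rL^1$-quotient norm is an infimum over $\pi\in\rW^{1,1}(\Omega)$ only, so a priori the first is smaller. You need at least norm equivalence: otherwise the $\rL^1$-quotient need not be closed in $X_0'$, and $\overline{D(B')}$ could be strictly larger than it.

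Boundary measures can be dropped, since adding a measure carried by $\partial\Omega$ only increases total variation. For the $\mathrm{BV}$ part, proceed as in Anzellotti--Giaquinta, which the paper invokes at this point:
\begin{itemize}
\item Take a locally finite partition of unity $(\psi_j)_j$ on $\Omega$ and set $\pi_\varepsilon:=\sum_j\rho_{\delta_j}*(\psi_j\pi)$ with the $\delta_j$ small.
\item Using $\sum_j\nabla\psi_j=0$ and $u=\sum_j\psi_j u$, write $u+\nabla\pi_\varepsilon=\sum_j\rho_{\delta_j}*\big(\psi_j(u+\nabla\pi)\big)+E_\varepsilon$ with $\|E_\varepsilon\|_{\rL^1(\Omega)}\leqslant\varepsilon$.
\item Since mollification does not increase total variation, this gives $\pi_\varepsilon\in\rW^{1,1}(\Omega)$ and $\|u+\nabla\pi_\varepsilon\|_{\rL^1(\Omega)}\leqslant\|u+\nabla\pi\|_{M(\Omega)}+\varepsilon$.
\end{itemize}
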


Note that \autoref{thm:main}, together with the closed graph theorem, implies that the restrictions of $(\tilde{T}(t)){t \geqslant 0}$ to $\iota(\rL^1{\sigma,n}(\Omega))$ form a semigroup of $\rL^1$-bounded operators on $\iota(\rL^1_{\sigma,n}(\Omega))$. By \autoref{cor:main}, this semigroup is neither strongly continuous nor analytic with respect to the $\rL^1$-topology. However, it is strongly continuous and analytic with respect to the finer topology induced by $\nicefrac{\rL^1(\Omega,\C^d)}{\nabla \rW^{1,1}(\Omega)}$.

\smallskip 

Most of the difficulties arise from the failure of the Helmholtz decomposition and the absence of a bounded Leray projection, due to the fact that $\nabla\rW^{1,1}(\Omega)$ is not complemented in $\rL^{1}(\Omega,\C^d)$, see Bourgain and Brezis \cite{BB:02}.
The classical approach to generation results on domains relies on localization together with a careful analysis of the half-space problem. However, this strategy is ruled out by the results of Desch, Hieber, and Pr\"{u}ss \cite{DHP:01} and those of Uka\"{i} \cite{Uka:87}.

Another major difficulty is that operators on quotient spaces are generally awkward to handle. In particular, the semigroup generated by 
$-\tilde{A}$
cannot be realized as the quotient semigroup associated with the heat semigroup on $\rL^1(\Omega,\C^d)$, since the latter does not preserve the subspace of gradients $\nabla \rW^{1,1}(\Omega)$. This failure ultimately stems from the incompatibility with the no-slip boundary conditions.

This is not merely a technical obstruction, but rather reflects an intrinsic feature of the Stokes semigroup on domains: unlike in the whole-space or perfect slip boundary conditions settings, it cannot simply be viewed as a restriction or quotient of the heat semigroup.

\smallskip

Rather than relying on localization arguments or quotient semigroup techniques, our approach is based on a refined duality argument.

The starting point is the Stokes operator $A$ on $X=\rC_{\sigma,0}(\Omega)$. By results of Abe, Giga \cite{AG:12} and Geng, Shen \cite{GS:25}, the operator $A$ generates a compact analytic $\rC_0$-semigroup on $X$. We then consider the associated sun-dual operator \(A^{\odot}\) and the sun-dual space $X^{\odot}$. Recall that $X^{\odot}\subset X'$ is the largest subspace on which the dual semigroup acts strongly continuously.

Since $A$ has compact resolvent, a classic result of Phillips, see \cite{Phi:55} or \autoref{lem:sun dual reflexiv}, implies that the sun-bidual satisfies
$
X^{\odot\odot}\cong X.
$
In particular, the sun-predual and sun-dual constructions coincide in this setting. This suggests that, despite the fact that the full dual space $X'$ is very large, the sun-dual space $X^{\odot}$ associated with the Stokes semigroup may still admit an $\rL^1$-type description.

Indeed, in \autoref{lem:X'} we prove that
$X'
\cong
\nicefrac{M(\Omega,\C^d)}{\nabla \mathrm{BV}(\Omega)},$
where $M(\Omega,\C^d)$ denotes the space of vector-valued Radon measures and $\mathrm{BV}(\Omega)$ the space of functions of bounded variation. Combining this representation with the identity
$X^{\odot}=\overline{D(A')}^{\,X'},$
see \autoref{lem:sun dual closure}, and regularity theory for the Stokes operator on $\rL^p_{\sigma,n}(\Omega)$, we identify the sun-dual space as
$X^{\odot}\cong
\nicefrac{\rL^1(\Omega,\C^d)}{\nabla \rW^{1,1}(\Omega)},$
see \autoref{prop:sun dual stokes}.

Having established this characterization, we show that the corresponding quotient realization of the Stokes operator coincides with the sun-dual operator $A^{\odot}$. The abstract theory of sun-dual operators, recalled in \autoref{prop:sun dual}, then yields \autoref{thm:main}. The invariance of $\iota (\rL^1_{\sigma,n}(\Omega))$ follows from the theory developed in \cite{BG:25}. Finally, we obtain the non-generation result stated in \autoref{cor:main} by using scaling techniques and \cite[Theorem 5.1]{DHP:01}.

\smallskip 

Despite sun-dual semigroups being a standard construction in semigroup theory, we are not aware of any previous use of this concept in the context of Stokes operators. This seems to be one of the first applications to a ``concrete non-trivial'' operator. Likewise, the explicit distinction between the Stokes operator defined on solenoidal vector fields and the Stokes operator viewed as an operator on the quotient space modulo gradient fields appears to be new.
Identifying the latter as a dual operator is common in $\rL^p$-spaces for $p \in (1,\infty)$, at least implicitly, when one identifies it with the operator on solenoidal vector fields, but to the best of our knowledge this perspective is novel in the endpoint cases.

The fact that the Stokes operator on solenoidal vector fields and the corresponding operator on the quotient space modulo gradient fields may differ, and can in fact exhibit completely different properties, was entirely unexpected to us and, at least from our perspective, rather surprising.

\smallskip 

While \autoref{cor:main} completes the operator-theoretic picture of the Stokes operator across the full scale of classical solenoidal Lebesgue spaces $\rL^p_{\sigma,n}(\Omega)$ for $p \in [1,\infty]$, \autoref{thm:main} establishes the sun-dual space of $\rC_{\sigma,0}(\Omega)$ as the canonical, well-behaved alternative at the $p=1$ endpoint. Although this abstract construction is highly robust, its specific $\rL^1$-characterization cannot be seamlessly generalized to unbounded domains.
In such geometries, even for $p \in (1,\infty)$, Farwig, Kozono, and Sohr \cite{FKS:05} already demonstrated that the Helmholtz decomposition fails and $\rL^p_{\sigma,n}(\Omega)$ must be replaced by appropriate intersections or sums with $\rL^2$.
Furthermore, the loss of compactness of the resolvent on $\rC_{\sigma,0}(\Omega)$ prevents the use of Phillips' sun-reflexivity theorem \cite{Phi:55}. This suggests that in the unbounded setting, the sun-dual construction may not reduce to a simple $\rL^1$-type quotient, but may instead yield a more exotic topological space. We plan to investigate this in the future.   

\smallskip 

Direct applications of our main \autoref{thm:main} are a new convergence result towards initial data in the space $\nicefrac{\rL^1(\Omega,\C^d)}{\nabla \rW^{1,1}(\Omega)}$ for solutions of the incompressible Navier--Stokes equations by using an iterative scheme, and new decay estimates for solutions. 
We plan to investigate these in the future works.  

\smallskip 

The present article is organized as follows. In \autoref{sec:prelim} we introduce useful notations and recall and comment on known results which we require later in the proof. In particular, in \autoref{ssec:sun dual} we recall the construction and theory of the sun-dual semigroup. The \autoref{ssec:Radon Meas. and BV} recalls elementary facts on Radon measures and the space of functions of Bounded Variations. Furthermore, in \autoref{ssec:stokes lp} we recall operator-theoretic results about the Stokes operator on $\rL^p_{\sigma,n}(\Omega)$ and $\rC_{\sigma,0}(\Omega)$.
Finally, \autoref{sec:main part} is the main section of the article. In \autoref{ssec:L1 no helmholtz} we discuss the failure of the Helmholtz decomposition in $\rL^1(\Omega,\C^d)$ and its consequences, 
in \autoref{ssec:sun dual space} we determine the sun-dual space of the Stokes operator on $\rC_{\sigma,0}(\Omega)$. Finally, the \autoref{ssec:sun dual semigroup} is dedicated to the proof of our main result, \autoref{thm:main}, whereas we prove \autoref{cor:main} in \autoref{ssec:L1-stokes}.

\newpage 

\section{Preliminaries}
\label{sec:prelim}

In this section we recall some notations and definitions, as well as some useful tools we need later. Throughout this whole work, $d\geqslant 2$ is a given fixed integer. We denote by $\Omega$ an open and bounded subset of $\R^d$.

\subsection{Sun Dual semigroups}
\label{ssec:sun dual}

\

Let $A \colon D(A) \subset X \to X$ be the  generator of a $\rC_0$-semigroup $(T(t))_{t \geqslant 0}$ on a Banach space $X$. The dual space of $X$ is denoted by $X^\prime$ and the dual operator of $A$ by $A^\prime$, and the dual operators of $T(t)$ by $T(t)'$. 
We define its \emph{sun-dual space} as the largest subspace of its dual space $X'$, where its dual semigroup $(T(t)')_{t \geqslant 0}$ is strongly continuous, i.e.
\begin{equation*}
	X^{\odot} := \{ x' \in X^\prime \colon \lim_{t \downarrow 0} \| T(t)' x' - x' \|_{X^\prime} = 0 \}
\end{equation*}
and call the semigroup formed by the restricted dual operators
\begin{equation*}
	T^{\odot}(t) := T(t)'|_{X^{\odot}}, \quad t \geqslant 0
\end{equation*}
the \emph{sun-dual semigroup}. It is by construction strongly continuous on $X^{\odot}$ and its generator is $A^{\odot} \colon D(A^{\odot}) \subset X^{\odot} \to X^{\odot}$, the part of the adjoint operator $A'$ in $X^{\odot}$, i.e.
\begin{equation*}
	A^{\odot} x' = A' x', \quad D(A^{\odot}) = \{ x' \in D(A') \colon A' x' \in X^{\odot} \},
\end{equation*} 
see \cite[Section II.2.6]{EN:00}. Moreover, we have the following characterization.

\begin{lem}\label{lem:sun dual closure}
	Let $A \colon D(A) \subset X \to X$ be the generator of a $\rC_0$-semigroup $(T(t))_{t \geqslant 0}$ on a Banach space $X$.
	Then $X^{\odot} = \overline{D(A^\prime)}^{X^\prime}$.
\end{lem}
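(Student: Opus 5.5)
We prove the two inclusions $\overline{D(A')}^{X'}\subset X^{\odot}$ and $X^{\odot}\subset\overline{D(A')}^{X'}$ separately, using only standard facts on $\rC_0$-semigroups and their adjoints: $T(t)'$ leaves $D(A')$ invariant, $A'T(t)'x'=T(t)'A'x'$ for $x'\in D(A')$, and for every $x'\in X'$ the weak$^*$ integral $\int_0^t T(s)'x'\,ds$ lies in $D(A')$ with
\begin{equation*}
A'\int_0^t T(s)'x'\,ds = T(t)'x'-x'.
\end{equation*}
These identities follow by dualizing the corresponding ones for $T(t)$ on $D(A)$ and on $X$, pairing with $x\in X$ and using $\langle \int_0^t T(s)x\,ds, x'\rangle$.

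\emph{Step 1: $D(A')\subset X^{\odot}$.} Let $x'\in D(A')$. For $x\in X$ with $\|x\|\leqslant 1$ we have $T(t)x-x=A\int_0^tT(s)x\,ds$, so
\begin{equation*}
\langle x, T(t)'x'-x'\rangle=\Big\langle \int_0^t T(s)x\,ds, A'x'\Big\rangle .
\end{equation*}
Taking the supremum over such $x$ gives $\|T(t)'x'-x'\|\leqslant tM\|A'x'\|$, where $M:=\sup_{s\leqslant 1}\|T(s)\|<\infty$ and $t\leqslant 1$. The right-hand side tends to $0$ as $t\downarrow 0$, so $x'\in X^{\odot}$.

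\emph{Step 2: $X^{\odot}$ is closed in $X'$.} Let $x'_n\in X^{\odot}$ with $x'_n\to x'$ in $X'$. Then
\begin{equation*}
\|T(t)'x'-x'\|\leqslant (M+1)\|x'-x'_n\|+\|T(t)'x'_n-x'_n\| \qquad (t\leqslant 1),
\end{equation*}
which uses that $\|T(t)'\|=\|T(t)\|\leqslant M$. Choosing $n$ large and then $t$ small shows $x'\in X^{\odot}$. Combining Steps 1 and 2 gives $\overline{D(A')}^{X'}\subset X^{\odot}$.

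\emph{Step 3: $X^{\odot}\subset\overline{D(A')}^{X'}$.} Let $x'\in X^{\odot}$ and set
\begin{equation*}
x'_t:=\frac1t\int_0^tT(s)'x'\,ds\in D(A').
\end{equation*}
Pairing with $x$, $\|x\|\leqslant 1$, gives
\begin{equation*}
|\langle x,x'_t-x'\rangle|\leqslant \sup_{s\leqslant t}\|T(s)'x'-x'\|,
\end{equation*}
and the right-hand side tends to $0$ by the definition of $X^{\odot}$. Hence $x'_t\to x'$ in norm, so $x'\in\overline{D(A')}^{X'}$.

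The only delicate point is the adjoint integral identity: the map $s\mapsto T(s)'x'$ is merely weak$^*$-continuous, so the integral must be understood in the weak$^*$ sense. The membership in $D(A')$ is then verified by testing the identity against $x\in D(A)$ and using $\langle Ax, \int_0^t T(s)'x'\,ds\rangle=\langle T(t)x-x,x'\rangle$, which is the definition of the adjoint operator.
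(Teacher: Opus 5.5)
Your proof is correct and complete. The ingredients are the estimate $\|T(t)'x'-x'\|\leqslant tM\|A'x'\|$ on $D(A')$, the norm-closedness of $X^{\odot}$, and the approximation of $x'\in X^{\odot}$ by the weak$^*$ means $\frac1t\int_0^tT(s)'x'\,\mathrm{d}s\in D(A')$; together they form the standard argument. The paper gives no proof of its own here and only refers to the sun-dual theory in Engel--Nagel (Section II.2.6), so your proposal supplies the usual proof behind that citation. The invariance and commutation facts for $T(t)'$ that you list at the outset are never used.
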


Let us now summarize the semigroup properties of the sun-dual operator. We refer the reader to \cite[Section II.2.6]{EN:00}.

\begin{prop}\label{prop:sun dual}
	Let $A \colon D(A) \subset X \to X$ be the generator of a $\rC_0$-semigroup $(T(t))_{t \geqslant 0}$ on a Banach space $X$. Then $A^{\odot}$ generates a $\rC_0$-semigroup $(T^{\odot}(t))_{t \geqslant 0}$ on a Banach space $X^{\odot}$. If the semigroup $(T(t))_{t \geqslant 0}$ is analytic/ bounded/ compact, then the semigroup $(T^{\odot}(t))_{t \geqslant 0}$ is so. If so, the angles of analyticity coincide. 
\end{prop}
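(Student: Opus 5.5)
The plan is to reduce everything to the dual semigroup $(T(t)')_{t\geqslant 0}$ on $X'$ and to check each property in turn.

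\emph{Generation.} First, $T(t)'$ is a semigroup of bounded operators on $X'$ with $\|T(t)'\| = \|T(t)\|$. It leaves $X^{\odot}$ invariant: if $x'\in X^{\odot}$, then
\[
\|T(s)'T(t)'x'-T(t)'x'\|\leqslant \|T(t)\|\,\|T(s)'x'-x'\|\to 0 .
\]
Next, $X^{\odot}$ is closed in $X'$. This follows from local boundedness $\sup_{t\leqslant 1}\|T(t)\|<\infty$ and an $\varepsilon/3$ argument. Hence $(T^{\odot}(t))_{t\geqslant0}$ is a $\rC_0$-semigroup on the Banach space $X^{\odot}$. To identify its generator with the part of $A'$ in $X^{\odot}$, I would use resolvents. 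For $\lambda$ large, $R(\lambda,A)'=R(\lambda,A')$ is the Laplace transform of $T(t)'$ in the weak$^*$ sense. On $X^{\odot}$ this Laplace integral converges in norm, so it equals $R(\lambda, A^{\odot}_{\mathrm{gen}})$, where $A^{\odot}_{\mathrm{gen}}$ denotes the generator. Since $R(\lambda,A')$ maps $X^{\odot}$ into $D(A')$ and also into $X^{\odot}$, the two operators coincide with the part of $A'$ in $X^{\odot}$. This is the content of \cite[II.2.6]{EN:00}, which I would cite for the details.

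\emph{Boundedness.} This is immediate from $\|T^{\odot}(t)\|\leqslant\|T(t)'\|=\|T(t)\|$.

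\emph{Compactness.} I would use Schauder's theorem: $T(t)$ compact implies $T(t)'$ compact on $X'$. The restriction of a compact operator to an invariant closed subspace is again compact.

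\emph{Analyticity.} Suppose $T$ extends analytically to a sector $\Sigma_\theta$ and is bounded on its proper subsectors. Then $z\mapsto T(z)'$ is analytic in operator norm, because adjoints preserve norm and analyticity in $\mathcal{L}(X)$ transfers to $\mathcal{L}(X')$ through the isometry $S\mapsto S'$. It is also bounded on subsectors. For $z\in\Sigma_\theta$, the operator $T(z)'$ leaves $X^{\odot}$ invariant by the same estimate as in the first paragraph. Strong continuity as $z\to0$ within closed subsectors then follows from boundedness together with convergence on the dense set $D(A^{\odot})$, using the standard resolvent/Cauchy-integral representation. Alternatively, I would argue through the resolvent characterization: the sectorial estimates $\|\lambda R(\lambda,A)\|\leqslant M$ on $\Sigma_{\pi/2+\theta}$ pass verbatim to $R(\lambda,A')$ and hence to its restriction $R(\lambda,A^{\odot})$. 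This gives analyticity of the same angle $\theta$ for $T^{\odot}$.

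\emph{Equality of angles.} For the converse inequality, I would use that $X$ embeds isometrically into $X^{\odot\prime}$; this holds since $X^{\odot}$ is weak$^*$-dense in $X'$, because it contains $D(A')$, which is weak$^*$-dense. The resolvent bounds of $A^{\odot}$ then give bounds for $A$, via $\|R(\lambda,A)x\| = \sup_{x^{\odot}}|\langle x, R(\lambda,A^{\odot})x^{\odot}\rangle|$ up to the equivalent norm constant. The main subtle point is precisely this norming property: $X^{\odot}$ is in general only an equivalent norming subspace of $X'$, not an isometric one. It only affects constants, not the sector, so the angles coincide.
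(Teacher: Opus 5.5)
Up to your last paragraph the argument is sound. It is the standard one behind the paper's bare reference to Engel--Nagel, Section II.2.6, and it covers:
\begin{itemize}
\item invariance and closedness of $X^{\odot}$;
\item identification of the generator with the part of $A'$ via $R(\lambda,A')|_{X^{\odot}}$;
\item boundedness;
\item compactness via Schauder's theorem and restriction to a closed invariant subspace;
\item analyticity via norm-analyticity of $z\mapsto T(z)'$, or by restricting the sectorial bounds of $R(\lambda,A')=R(\lambda,A)'$.
\end{itemize}
This shows that the angle of $(T^{\odot}(t))_{t\geqslant 0}$ is at least that of $(T(t))_{t\geqslant 0}$. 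That is all the paper uses later, since the Stokes angle is already the maximal value $\frac{\pi}{2}$.

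The reverse inequality for the angles is not established by what you wrote, and there are two problems.

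\emph{The norming estimate.} Weak$^*$-density of $X^{\odot}$ only makes $j\colon X\to (X^{\odot})'$ injective. It gives no lower bound, and $j$ is not isometric in general, contrary to your first sentence there. The lower bound needs a quantitative approximation. For $x'\in X'$, the elements $\lambda R(\lambda,A')x'$ lie in $D(A')\subset X^{\odot}$ (\autoref{lem:sun dual closure}). They satisfy $\|\lambda R(\lambda,A')x'\|\leqslant M\|x'\|$ for all large $\lambda>0$, and $\langle x,\lambda R(\lambda,A')x'\rangle=\langle \lambda R(\lambda,A)x,x'\rangle\to\langle x,x'\rangle$. Hence $\|x\|\leqslant M\sup_{\|x^{\odot}\|\leqslant 1}|\langle x,x^{\odot}\rangle|$.

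\emph{The spectral inclusion.} This is the more serious gap. Your formula $\|R(\lambda,A)x\|\approx\sup_{x^{\odot}}|\langle x,R(\lambda,A^{\odot})x^{\odot}\rangle|$ presupposes $\lambda\in\rho(A)$. On the larger sector you only know $\lambda\in\rho(A^{\odot})$, so the inclusion $\rho(A^{\odot})\subset\rho(A)$ is exactly what must be shown. It follows from the norming estimate:
\begin{itemize}
\item For $y\in D(A)$ and $x^{\odot}\in X^{\odot}$ one has $\langle y,x^{\odot}\rangle=\langle(\lambda-A)y,R(\lambda,A^{\odot})x^{\odot}\rangle$. Hence $\|y\|\leqslant M\|R(\lambda,A^{\odot})\|\,\|(\lambda-A)y\|$, so $\lambda-A$ is injective with closed range.
\item If $x'\in X'$ annihilates $\mathrm{rg}(\lambda-A)$, then $x'\in D(A')\subset X^{\odot}$ with $A'x'=\lambda x'\in X^{\odot}$. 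So $x'\in\ker(\lambda-A^{\odot})=\{0\}$, and the range is dense.
\end{itemize}
Thus $\lambda\in\rho(A)$ with $\|R(\lambda,A)\|\leqslant M\|R(\lambda,A^{\odot})\|$. Only then do the sectorial bounds of $A^{\odot}$ transfer back to $A$ and give equality of the angles.
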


By repeating the sun-dual construction we define the \emph{sun bi-dual space} by 
\begin{equation*}
	X^{\odot \odot} := (X^{\odot})^{\odot},
\end{equation*}
where the sun-dual spaces of $X^{\odot}$ are taken with respect to the sun-dual semigroup $(T^{\odot}(t))_{t\geqslant 0}$. We call a space $X$ \emph{sun-dual reflexive} if $X^{\odot \odot} \cong X$. 
A classic result of Phillips, see \cite{Phi:55}, provides a sufficient criterion for sun-dual reflexivity.

\begin{lem}\label{lem:sun dual reflexiv}
	Let $A \colon D(A) \subset X \to X$ be the generator of a $\rC_0$-semigroup $(T(t))_{t \geqslant 0}$ on a Banach space $X$. Assume that $A$ has compact resolvent, then $X$ is sun-dual reflexive. 
\end{lem}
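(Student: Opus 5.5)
The strategy is to reduce sun-dual reflexivity to a statement about compact operators. We use two facts: the resolvent of the sun-dual operator is the restriction of the adjoint resolvent, and $X^{\odot\odot}$ is the closure of the domain of the second adjoint, in analogy with \autoref{lem:sun dual closure}. Fix $\lambda$ in the resolvent set and write $R=R(\lambda,A)$. Then $R$ is compact on $X$, and by Schauder's theorem $R'$ is compact on $X'$. Since $X^{\odot}$ is invariant under $R'$, the restriction $R^{\odot}=R'|_{X^{\odot}}=R(\lambda,A^{\odot})$ is compact as well.

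The canonical embedding $j\colon X\to X^{\odot\prime}$, $\langle jx,x^\odot\rangle=\langle x^\odot,x\rangle$, is an isomorphism onto its range; this is the standard fact that $X^{\odot}$ is weak$^*$-dense and norming up to equivalence. Moreover, $j(X)\subset X^{\odot\odot}$ because $T^{\odot}(t)'jx=jT(t)x\to jx$. It therefore remains to prove $X^{\odot\odot}\subset j(X)$. By \autoref{lem:sun dual closure} applied to $A^{\odot}$, we have
\[
X^{\odot\odot}=\overline{D(A^{\odot\prime})}=\overline{\operatorname{ran}(R^{\odot})'} .
\]
Since $j(X)$ is closed, it suffices to show $(R^{\odot})'x^{\odot\prime}\in j(X)$ for every $x^{\odot\prime}\in X^{\odot\prime}$.

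This is the main step. Take $x^{\odot\prime}$ and, by Goldstine-type density, a bounded net $(x_\alpha)\subset X$ with $jx_\alpha\to x^{\odot\prime}$ weak$^*$ in $X^{\odot\prime}$. Boundedness follows because the unit ball of $j(X)$ is weak$^*$-dense in a multiple of the unit ball of $X^{\odot\prime}$, which in turn holds because $X^{\odot}$ is norming for $X$ up to a constant. Since $R$ is compact, after passing to a subnet $Rx_\alpha\to y$ in norm in $X$. Then, for every $x^\odot\in X^{\odot}$,
\[
\langle (R^{\odot})'x^{\odot\prime},x^\odot\rangle=\lim_\alpha\langle x^\odot,Rx_\alpha\rangle=\langle x^\odot,y\rangle ,
\]
so $(R^{\odot})'x^{\odot\prime}=jy\in j(X)$.

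I expect the main obstacle to be justifying the norming property and the resulting bounded approximating net. I would derive it from the estimate $\|x\|\le \limsup_{t\downarrow 0}\sup_{\|x'\|\le1}|\langle T(t)'x',x\rangle|$, using that $T(t)'x'$ is close to an element of $X^{\odot}$, for instance $\frac1t\int_0^tT(s)'x'\,ds\in X^{\odot}$ with norm at most $M\|x'\|$. The separation (bipolar) theorem then gives weak$^*$ density of $j(B_X)$ in $M^{-1}B_{X^{\odot\prime}}$.
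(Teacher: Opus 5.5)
Your proof is correct. The paper gives no argument for this lemma; it only cites Phillips \cite{Phi:55}. So your proposal is a self-contained version of the classical proof, not a competing route.

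By \autoref{lem:sun dual closure} applied to $A^{\odot}$, sun-reflexivity is equivalent to $\operatorname{ran} R(\lambda,A^{\odot})' = D(A^{\odot\prime})\subset j(X)$. Your net argument shows that compactness of $R$ forces this inclusion. An equivalent formulation: $R''(X'')\subset X$ for compact $R$, pushed through the restriction map $X''\to X^{\odot\prime}$, $x''\mapsto x''|_{X^{\odot}}$. This map is onto by Hahn--Banach and intertwines $R''$ with $R(\lambda,A^{\odot})'$.

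Some streamlining remarks:
\begin{enumerate}[(i)]
\item The compactness of $R^{\odot}$, obtained via Schauder in your first paragraph, is never used.
\item The norming constant $M$ is needed only to make $j(X)$ closed, not to get the bounded approximating net. Since $X^{\odot}$ carries the norm of $X'$, the polar of $j(B_X)$ in $X^{\odot}$ is exactly the unit ball of $X^{\odot}$. The bipolar theorem then says the weak$^*$ closure of $j(B_X)$ is the whole unit ball of $X^{\odot\prime}$; this is Goldstine composed with the quotient map above.
\item Your final identity only uses $\langle x^{\odot},Rx_\beta\rangle\to\langle x^{\odot},y\rangle$, that is, weak convergence of a subnet of $(Rx_\alpha)$. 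So the argument goes through verbatim when $R(\lambda,A)$ is merely weakly compact. That is the sufficiency half of de Pagter's characterization of sun-reflexivity.
\end{enumerate}

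The citation buys brevity. Your proof buys self-containedness and shows exactly which property of the resolvent is used.
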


\subsection{Radon measures, function of bounded variations}
\label{ssec:Radon Meas. and BV}

\

The space of $\C$-valued Radon measures on $\bar\Omega$, where $\Omega\subset \R^d$ is an open set with $\rC^1$ boundary, $d\geqslant 2$, is denoted $M(\bar{\Omega},\C)$ and can be identified via the  Riesz-Markov-Kakutani representation theorem  
\begin{align*}
    M(\bar{\Omega},\C) = \rC(\bar\Omega,\C)',
\end{align*}
where $\rC(\bar\Omega,\C)$ denotes the vector space of $\C$-valued continuous functions on $\Omega$, up to the boundary.

\smallskip 
\

In that regard, $M({\Omega},\C): =\{ \mu|_\Omega,\ \mu\in M(\bar{\Omega},\C)\}$, can be canonically identified via the dual identity
\begin{align*}
    M({\Omega},\C) = \rC_0(\Omega,\C)',
\end{align*}
where $\rC_0(\Omega,\C):=\{ u \in \rC(\bar{\Omega},\C) \colon  \ u|_{\partial \Omega} = 0 \}$.

\smallskip 
\

The space of functions of bounded variations is defined to be
\begin{align*}
    \mathrm{BV}(\Omega):=\{ u\in \rL^1({\Omega},\C): \nabla u \in M({\Omega},\C^d) \}.
\end{align*}
This is a Banach space when endowed with its natural norm. By the Radon--Nikodym Theorem, we have the canonical isometric embeddings
\begin{align*}
    \rL^1(\Omega,\C^d)\hookrightarrow M(\Omega,\C^d),\quad\text{ and }\quad \rW^{1,1}(\Omega,\C)\hookrightarrow \mathrm{BV}(\Omega,\C).
\end{align*}

Furthermore, the trace operator
\begin{align*}
    \mathrm{BV}(\Omega,\C) &\longrightarrow \rL^1(\partial\Omega,\C)\\
    u\quad&\longmapsto\quad u|_{\partial\Omega}
\end{align*}
is well-defined, bounded and onto, since the trace operator from $\rW^{1,1}(\Omega)\subset\mathrm{BV}(\Omega)\longrightarrow \rL^1(\partial\Omega)$ is onto.

The range of the gradient is denoted by space $$\nabla\mathrm{BV}(\Omega)=\{ \nabla u,\  u \in \mathrm{BV}({\Omega},\C) \}\subset M(\Omega,\C^d).$$ The extended gradient range is  $$\nabla\mathrm{BV}(\bar\Omega)=\{ \nabla u|_{\Omega} - \nu\, u|_{\partial\Omega}\,\mathrm{d}\sigma,\  u \in \mathrm{BV}({\Omega},\C) \}\subset  \nabla\mathrm{BV}(\Omega,\C) + \nu M(\partial\Omega,\C)\subset M(\bar\Omega,\C^d),$$ and acts on $\rC(\bar\Omega,\C^d)$ by the formula
\begin{align*}
    \langle \nabla u|_{\Omega} - \nu\, u|_{\partial\Omega}\mathrm{d}\sigma,\, \varphi\rangle &= \langle \nabla u|_{\Omega},\, \varphi\rangle - \int_{\partial\Omega} \varphi \cdot\nu\, u|_{\partial\Omega} \, \mathrm{d} \sigma  ,\quad\forall \varphi\in\rC(\bar\Omega,\C^d)\\
    &=-\int_{\Omega} \div \varphi(x)\, u(x) \, \mathrm{d} x  ,\quad\quad\quad\quad\forall \varphi\in\rC^1(\bar\Omega,\C^d).
\end{align*}
Here, $\nu\in\rC(\partial\Omega,\R^d)$ stands for the outward unit normal of $\Omega$ on the boundary.

\subsection{The Stokes semigroup on $\rL^p_{\sigma,n}(\Omega)$ and $\rC_{\sigma,0}(\Omega)$}
\label{ssec:stokes lp}

\

In this section we recall the $\rL^p$ and $\rC$-theory of the Stokes operator. First, we define the spaces as
\begin{align*}
	\rL^p_{\sigma,n}(\Omega)
	&:= \{ u \in \rL^p(\Omega,\C^d) \colon \div u = 0, \ u \cdot \nu |_{\partial \Omega} = 0 \}, \\
	\rC_{\sigma,0}(\Omega)
	&:= \{ u \in \rC(\bar{\Omega},\C^d) \colon \div u = 0, \ u|_{\partial \Omega} = 0 \}, \\
	\rW^{1,1}_{\sigma,0}(\Omega)
	&:= \{ u \in \rW^{1,1}(\Omega,\C^d) \colon \div u = 0, \ u|_{\partial \Omega} = 0 \}, 
\end{align*}
for $p \in [1,\infty]$,
and more generally for  Sobolev-Slobodeckij space, Bessel potential spaces and Besov spaces $X^{s,p} \in \{ \rW^{s,p}, \rH^{s,p}, \rB_{pq}^s \}$
with $s \in \R$ and $p \in (1,\infty)$, $q\in[1,\infty]$, we have
\begin{equation*}
	X^{s,p}_{\sigma,n}(\Omega)
	:= \{ u \in X^{s,p}(\Omega,\C^d) \colon \div u = 0, \ u \cdot \nu |_{\partial \Omega} = 0 \},
\end{equation*}
if $s \in (-1+\tfrac{1}{p},\tfrac{1}{p})$ and
\begin{align*}
	X^{s,p}_{\sigma,0}(\Omega)
	:= \{ u \in X^{s,p}(\Omega,\C^d) \colon \div u = 0, \ u|_{\partial \Omega} = 0 \},
\end{align*}
if $s \in (\tfrac{1}{p},1+\tfrac{1}{p})$. If $s\in(1+1/p,2+1/p)$, we set $X^{s,p}_{\sigma,D}(\Omega):=X^{s-1,p}_{\sigma,0}(\Omega)\cap X^{s,p}(\Omega,\C^d)$.
We recall the following facts about these spaces from \cite[Chap.~4]{BG:25}.

\begin{lem}\label{lem:density}
	Let $p \in (1,\infty)$ and $s \in (-1+\tfrac{1}{p},\tfrac{1}{p})$ and $\Omega$ be a bounded Lipschitz domain. Then $\rC^\infty_{\sigma,c}(\Omega)$ is strongly dense in $X^{s,p}_{\sigma,n}(\Omega), \rC_{\sigma,0}(\Omega),
	X^{s+1,p}_{\sigma,0}(\Omega)$ and $\rL^1_{\sigma,n}(\Omega)$, $\rW^{1,1}_{\sigma,0}(\Omega)$. 
\end{lem}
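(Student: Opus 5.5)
The plan is to obtain every density statement from one building block: smooth, compactly supported, divergence-free approximants of a single compactly supported solenoidal field. We treat the five target spaces one after another. The preliminary reduction is a \emph{localization plus divergence correction} (Bogovski\u{\i}) argument.

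\textbf{Step 1 (localization).} Since $\Omega$ is a bounded Lipschitz domain, we cover $\overline{\Omega}$ by finitely many open sets $U_0\Subset\Omega$ and $U_1,\dots,U_N$, with each $\Omega\cap U_j$ ($j\geqslant1$) star-shaped with respect to a ball. Take a subordinate partition of unity $(\chi_j)$. For $u$ in any of the listed spaces, $u_j:=\chi_j u$ is no longer solenoidal: $\div u_j=\nabla\chi_j\cdot u$. This function has zero mean on $\Omega\cap U_j$ after the usual redistribution of means between overlapping patches, which uses $\int_\Omega \nabla\chi_j\cdot u=-\int_\Omega\chi_j\div u+\int_{\partial\Omega}\chi_j u\cdot\nu=0$ together with $\sum_j\nabla\chi_j=0$. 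We then subtract $B_j(\nabla\chi_j\cdot u)$, where $B_j$ is the Bogovski\u{\i} operator on $\Omega\cap U_j$. The key input is that $B_j$ is bounded $X^{s,p}_0\to X^{s+1,p}_0$ for all $s>-2+1/p$ in the Lipschitz case, and also $\rL^1\to \rL^1$ and $\rW^{1,1}_0\to$ (at least) $\rL^1$ with a gain. Here $\nabla\chi_j\cdot u$ is one derivative smoother than needed, so the correction lives in the right space. Note that $u\cdot\nu=0$, respectively $u|_{\partial\Omega}=0$, is exactly what makes $\nabla\chi_j\cdot u$ an admissible (extension-by-zero) datum for $B_j$.

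\textbf{Step 2 (local approximation).} On a star-shaped piece, we approximate the compactly supported-in-$\overline{\Omega\cap U_j}$ solenoidal field by the classical dilation/translation trick. We shift inward, i.e.\ set $u_\lambda(x)=u(x_0+\lambda(x-x_0))$ with $\lambda>1$, which is still divergence free and now has support compactly inside $\Omega\cap U_j$, and then mollify. Both operations preserve $\div=0$ and converge in $X^{s,p}$, $\rL^1$, $\rW^{1,1}$ and $\rC$. For $\rC_{\sigma,0}$ and $\rW^{1,1}_{\sigma,0}$ the vanishing trace lets us extend $u$ by zero to a field that is solenoidal on $\R^d$, so the dilation converges uniformly, respectively in $\rW^{1,1}$. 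For $\rL^p_{\sigma,n}$-type spaces, $u\cdot\nu=0$ is precisely the condition that the zero extension is divergence free in $\mathcal{D}'(\R^d)$. For negative $s\in(-1+1/p,0)$, extension by zero is bounded on $X^{s,p}$ in this range, so the same argument applies.

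\textbf{Step 3 (gluing) and main obstacle.} Summing the local approximants and applying the Bogovski\u{\i} correction from Step 1 to the smooth compactly supported pieces again gives elements of $\rC^\infty_{\sigma,c}(\Omega)$, because $B_j$ maps $\rC^\infty_c$ with mean zero into $\rC^\infty_c$. The corrections converge by the boundedness of $B_j$.

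The delicate cases are $p=1$ and $\rC_{\sigma,0}$. There $B_j$ is not bounded $\rL^1\to\rW^{1,1}$, nor $\rC\to\rC^1$. However, we only need $B_j\colon \rL^1\to \rL^1$, respectively $\rW^{1,1}_0\to\rW^{1,1}_0$ for the $\rW^{1,1}_{\sigma,0}$ case (the latter holds since $\nabla B_j$ is a Calder\'on--Zygmund operator applied to $\nabla\chi_j\cdot u\in\rW^{1,1}_0$, and $\nabla B_j f$ with $f\in\rW^{1,1}_0$ is controlled via $\nabla^2B_j f= $ CZ$(\nabla f)$ and Sobolev embedding). For $\rC_{\sigma,0}$ we use $B_j\colon\rW^{1,q}_0\to\rW^{2,q}_0\hookrightarrow\rC$ for $q>d$, which applies after first approximating $u$ by fields whose $\nabla\chi_j\cdot u$ is in $\rW^{1,q}_0$, i.e.\ by ordering the steps so that mollification precedes the correction. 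This reordering is where I expect the main technical care to be needed.
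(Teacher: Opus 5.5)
The paper gives no argument for this lemma; it is quoted from \cite[Chap.~4]{BG:25}. Judged on its own, your proposal has a genuine gap in the gluing of Step~3. Whatever order you use (correct, then dilate and mollify, or mollify, then correct), the $j$-th local object converges to $\chi_j u-B_j(\nabla\chi_j\cdot u)$. Hence the sum of your approximants converges to
\[
u-W,\qquad W:=\sum_j B_j(\nabla\chi_j\cdot u),
\]
and not to $u$. Although $\sum_j\nabla\chi_j\cdot u=0$, the operators $B_j$ differ from patch to patch, so linearity does not make $W$ vanish. In general $W$ is a nonzero solenoidal field, and nothing in your argument approximates it. The ``redistribution of means'' does not address this: by your own computation, each $\nabla\chi_j\cdot u$ already has mean zero on $\Omega\cap U_j$.

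The fix is to drop the patchwise corrections and correct once, globally. Let $\phi_j\in\rC^\infty_c(\Omega\cap U_j)$ be the dilated and mollified versions of the non-solenoidal pieces $\chi_j\tilde u$, where $\tilde u$ is the zero extension, and set $\Phi:=\sum_j\phi_j$. Since $\div(\chi_j\tilde u)=\nabla\chi_j\cdot\tilde u$ on $\R^d$ is as regular as $u$ itself, $\div\phi_j\to\nabla\chi_j\cdot\tilde u$ in the norm of the space of $u$. Hence $\div\Phi\to\sum_j\nabla\chi_j\cdot\tilde u=0$ in $\rL^1$, in $\rW^{1,1}$, uniformly, or in $X^{s,p}$, as the case may be. This is where $\sum_j\nabla\chi_j=0$ near $\overline\Omega$ is really used.

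Then $\Phi-B(\div\Phi)\in\rC^\infty_{\sigma,c}(\Omega)$ converges to $u$, where $B$ is a single Bogovski\u{\i} operator on $\Omega$, assembled from the $B_j$ via your mean redistribution. The endpoint bounds you need are the following.
\begin{enumerate}[(i)]
\item For $\rL^1_{\sigma,n}(\Omega)$: $B\colon\rL^1\to\rL^1$, since the kernel is $O(|x-y|^{1-d})$.
\item For $\rW^{1,1}_{\sigma,0}(\Omega)$: the embedding $\rW^{1,1}_0\hookrightarrow\rL^{\frac{d}{d-1}}$ followed by $B\colon\rL^{\frac{d}{d-1}}\to\rW^{1,\frac{d}{d-1}}_0$. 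A Calder\'on--Zygmund argument gives no $\rL^1$ bound, so your justification does not work as stated.
\item For $\rC_{\sigma,0}(\Omega)$: $B\colon\rL^q\to\rW^{1,q}_0\hookrightarrow\rC(\overline\Omega)$ with $q>d$, so no reordering is needed. Your $\rW^{1,q}_0\to\rW^{2,q}_0$ route would require the data to converge in $\rW^{1,q}$, which a merely continuous $u$ does not provide.
\item For the spaces with $1<p<\infty$: $B\colon X^{t,p}_0\to X^{t+1,p}_0$.
\end{enumerate}

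Finally, strong density fails in $\rB^s_{p,\infty}$. So the $X^{s,p}$ claims, and the convergence in your Step~2, need $q<\infty$.
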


For $p \in (1,\infty)$ and $s \in (-2+\tfrac{1}{p},-1+\tfrac{1}{p})$ we define
\begin{equation*}
	\rH^{s,p}_{\sigma}(\Omega)
	:= (\rH^{-s,p'}_{\sigma,0}(\Omega))'
\end{equation*}
with $\tfrac{1}{p} + \tfrac{1}{p'} = 1$. 
We recall the following result about the Stokes operator with no-slip boundary conditions on $\rL^p_{\sigma,n}(\Omega)$.

\begin{prop}\label{prop:stokes Lp}
	Let $\Omega \subset \R^d$ be a bounded $\rC^{1,\alpha}$-domain with $\alpha > 0$. Let $p\in(1,\infty)$, the Stokes operator $A$ on $\rL^p_{\sigma,n}(\Omega)$ is invertible and admits a bounded $\rH^\infty$-calculus of angle $0$ on 
	\begin{equation*}
		D_p(A^{\frac{s}{2}})
		= 
		\begin{cases}
			\rH^{s,p}_{\sigma}(\Omega), &\text{ if } s \in (-2+\tfrac{1}{p},-1+\tfrac{1}{p}),\\
			\rH^{s,p}_{\sigma,n}(\Omega), &\text{ if } s \in (-1+\tfrac{1}{p},\tfrac{1}{p}),\\
			\rH^{s,p}_{\sigma,0}(\Omega), &\text{ if } s \in (\tfrac{1}{p},1+\tfrac{1}{p}),\\
			\rH^{s,p}_{\sigma,D}(\Omega), &\text{ if } s \in (1+\tfrac{1}{p},1+\alpha+\tfrac{1}{p}) ,
		\end{cases}
	\end{equation*}
accompanied by the underlying isomorphisms. 
\end{prop}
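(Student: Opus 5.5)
The proof proceeds in three layers: the Hilbert space case $p=2$, the extension to $p\in(1,\infty)$ at the energy levels $s=\pm 1$, and the identification of the whole scale of fractional domains by interpolation and elliptic regularity.

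\emph{The case $p=2$.} We use the sesquilinear form $\mathfrak{a}(u,v)=\int_\Omega \nabla u : \overline{\nabla v}\,\mathrm{d}x$ on $\rH^{1,2}_{\sigma,0}(\Omega)$.
\begin{itemize}
\item[(i)] By Poincaré's inequality $\mathfrak{a}$ is closed, symmetric and coercive. The associated operator is the Stokes operator $A$ on $\rL^2_{\sigma,n}(\Omega)$: the pressure is recovered from the vanishing of the functional $\Delta u+f$ on solenoidal test fields, via de Rham/Ne\v{c}as.
\item[(ii)] Hence $A$ is self-adjoint and positive, and $0\in\rho(A)$.
\item[(iii)] Self-adjointness gives a bounded $\rH^\infty$-calculus of angle $0$.
\item[(iv)] Kato's square root property for symmetric forms gives $D_2(A^{1/2})=\rH^{1,2}_{\sigma,0}(\Omega)$, and by duality $D_2(A^{-1/2})=\rH^{-1,2}_\sigma(\Omega)$.
\end{itemize}

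\emph{The case $p\in(1,\infty)$.} The key analytic input is the $\rW^{1,p}$-theory of the stationary and resolvent Stokes problem on $\rC^{1,\alpha}$ (even $\rC^1$) domains. For $f\in\rH^{-1,p}_\sigma(\Omega)$ and $\lambda$ in any sector $\Sigma_\theta$, $\theta<\pi$, this theory provides a unique weak solution $u\in\rH^{1,p}_{\sigma,0}(\Omega)$ of $\lambda u-\Delta u+\nabla\pi=f$ with
\[
|\lambda|\,\|u\|_{\rH^{-1,p}}+|\lambda|^{1/2}\|u\|_{\rL^p}+\|u\|_{\rH^{1,p}}\lesssim \|f\|_{\rH^{-1,p}} .
\]
Such estimates follow from the Geng--Shen estimates quoted for $\rC_{\sigma,0}(\Omega)$, or from perturbation of the $\rL^2$ theory by localization and freezing of coefficients in $\rC^{1,\alpha}$ charts.
\begin{itemize}
\item[(i)] These estimates give sectoriality of $A$ on $\rH^{-1,p}_\sigma(\Omega)$ and on $\rL^p_{\sigma,n}(\Omega)$ for every angle, together with invertibility.
\item[(ii)] The part of the operator in $\rL^p_{\sigma,n}(\Omega)$ has adjoint equal to the Stokes operator on $\rL^{p'}_{\sigma,n}(\Omega)$.
\item[(iii)] For the $\rH^\infty$-calculus we use extrapolation from $\rL^2$. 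The resolvent satisfies $\rL^2$ off-diagonal (Davies--Gaffney type) estimates together with the $\rL^p\to\rL^q$ bounds just obtained, so the Blunck--Kunstmann criterion yields a bounded $\rH^\infty$-calculus on $\rL^p_{\sigma,n}(\Omega)$ for all $p\in(1,\infty)$. The angle can be taken to be $0$ because sectoriality holds in every sector, via Kalton--Weis.
\end{itemize}

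\emph{Fractional domains.}
\begin{itemize}
\item[(i)] Boundedness of the $\rH^\infty$-calculus gives $D_p(A^{s/2})=[\,D_p(A^{s_0/2}),D_p(A^{s_1/2})\,]_\theta$ by complex interpolation.
\item[(ii)] The endpoints are known: $D_p(A^{-1/2})=\rH^{-1,p}_\sigma$ by duality with $D_{p'}(A^{1/2})$, and $D_p(A^{1/2})=\rH^{1,p}_{\sigma,0}$. The latter combines the $\rW^{1,p}$ resolvent estimate, giving $\supseteq$ via $A^{-1/2}=A^{1/2}A^{-1}$, with the dual estimate, which amounts to boundedness of the Riesz transform $\nabla A^{-1/2}$.
\item[(iii)] The upper range $s\in(1+\tfrac1p,1+\alpha+\tfrac1p)$ requires the higher regularity $A^{-1}\colon \rH^{s-2,p}_{\sigma}\to \rH^{s,p}_{\sigma,D}$ for $s<1+\alpha+\frac1p$. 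This is the optimal regularity available on $\rC^{1,\alpha}$ domains, obtained by flattening the boundary and treating the metric error as a perturbation controlled by $\alpha$. We then interpolate between $s=1$ and such $s$.
\item[(iv)] Finally, we identify the complex interpolation spaces of the constrained spaces $\rH^{s,p}_{\sigma,n}$, $\rH^{s,p}_{\sigma,0}$ with the expected ones, excluding the critical values $s\in\{-1+\tfrac1p,\tfrac1p,1+\tfrac1p\}$ where trace conditions switch on. This uses the density result \autoref{lem:density} and a bounded projection onto solenoidal fields commuting with the Bessel scale, such as a Bogovski\u{\i}-type corrector.
\end{itemize}

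\emph{Main obstacle.} I expect the hardest steps to be the endpoint identification $D_p(A^{1/2})=\rH^{1,p}_{\sigma,0}(\Omega)$ for all $p$ on merely $\rC^{1,\alpha}$ domains, i.e.\ the Riesz transform bound, and the interpolation of the constrained solenoidal spaces. This is where the boundary regularity enters sharply. I would attack the Riesz transform bound first via the $\rW^{1,p}$ estimates for the stationary Stokes system, combined with the reverse inequality by duality.
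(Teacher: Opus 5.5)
\paragraph{Verdict.} Your proposal has a genuine gap at the $\rH^\infty$-calculus step, and everything after it depends on that step.

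\paragraph{Comparison with the paper.} The paper proves almost none of this itself. For $s>-1+\tfrac1p$ it cites \cite[Chap.~6]{BG:25} wholesale, and its only argument is duality for the lowest range. You instead rebuild the $\rL^p$ theory from scratch.

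\paragraph{The main gap.} The Stokes resolvent does \emph{not} satisfy Davies--Gaffney (exponential $\rL^2$ off-diagonal) estimates, so the Blunck--Kunstmann extrapolation cannot be invoked.
\begin{itemize}
\item That criterion tests the semigroup on localized data $\mathbf{1}_E f$. Such data is no longer solenoidal, so you must work with an extension to all of $\rL^2(\Omega,\C^d)$, such as $e^{-tA}\mathbb{P}$. This extension inherits the nonlocality of $\mathbb{P}$ and of the pressure.
\item Concretely, $e^{-tA}\mathbb{P}\to\mathbb{P}$ strongly on $\rL^2$ as $t\downarrow 0$. Hence for disjoint $E,F$ one has $\liminf_{t\downarrow0}\|\mathbf{1}_F e^{-tA}\mathbb{P}\mathbf{1}_E\|_{2\to2}\geqslant\|\mathbf{1}_F\mathbb{P}\mathbf{1}_E\|_{2\to 2}$, which is in general nonzero. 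A Davies--Gaffney bound would instead force this quantity to be $O(e^{-c\operatorname{dist}(E,F)^2/t})$.
\item Already in $\R^d$, the kernel of $(\lambda-\Delta)^{-1}\mathbb{P}$ decays only like $|x-y|^{-d}$.
\end{itemize}
This is why $\rH^\infty$-calculus for the Stokes operator is obtained by other routes: localization and perturbation from the half-space (Noll--Saal), or the Kunstmann--Weis criteria designed to avoid off-diagonal bounds. On $\rC^{1,\alpha}$ domains it is the content of \cite{BG:25}. Your identification of $D_p(A^{s/2})$ by complex interpolation rests on this calculus, so it stays unsupported until that step is replaced.

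\paragraph{The lowest range.} Your interpolation scheme starts at $s=-1$ and never reaches $s\in(-2+\tfrac1p,-1+\tfrac1p)$. This is exactly the range the paper proves, by duality:
\begin{itemize}
\item For such $s$ one has $-s\in(\tfrac1{p'},1+\tfrac1{p'})$.
\item The $\rL^p$ and $\rL^{p'}$ Stokes operators are adjoint to each other, because both are consistent with the self-adjoint operator on $\rL^2_{\sigma,n}(\Omega)$.
\item Hence $D_p(A^{s/2})=\big(D_{p'}(A^{-s/2})\big)'=\big(\rH^{-s,p'}_{\sigma,0}(\Omega)\big)'=\rH^{s,p}_{\sigma}(\Omega)$, where the last equality is the definition of $\rH^{s,p}_\sigma$.
\end{itemize}

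\paragraph{Other inputs you treat as routine.} Two further ingredients are substantial results that you would have to prove or cite.
\begin{itemize}
\item The estimates in \autoref{thm:stokes Linfty} concern $\rL^\infty$ data. They do not by themselves give the $\rH^{-1,p}\to\rH^{1,p}$ resolvent bounds you need.
\item Regularity up to $s<1+\alpha+\tfrac1p$ does not come from naive flattening. A $\rC^{1,\alpha}$ chart only yields $\rC^{\alpha}$ coefficients, and gaining the extra $\tfrac1p$ requires a finer argument.
\end{itemize}
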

\begin{proof}
	By \cite[Chap.~6]{BG:25} we already know the cases $s > -1 + \tfrac{1}{p}$. The other case holds by duality and self-adjointness on $\rL^2_{\sigma,n}(\Omega)$.
\end{proof}

Next, we recall the breakthrough result of Abe--Giga \cite{AG:12} and its refinements by Abe--Giga--Hieber \cite{AGH:13}, Geng--Shen \cite{GS:25} and Breit--Gaudin \cite{BG:25}. It will play a crucial role in our proof. 

\begin{thm}\label{thm:stokes Linfty}
	Let $\Omega \subset \R^d$ be a bounded $\rC^{1,\alpha}$-domain with $\alpha\in(0,1)$ and small $(1,\alpha)$-H\"{o}lder constant\footnote{This means, that up to increase the number of local charts, if $\varphi\,:\,\R^{d-1}\longrightarrow\R$ is a local description of the boundary of $\partial\Omega$, the quantity $\lVert \nabla'\varphi\rVert_{\rC^{0,\alpha}(\R^{d-1})}$ can be chosen small enough.}. Further, let $\theta \in [0,\pi)$. For all $\lambda \in \Sigma_{\theta}$ and all $f \in \rL^\infty_{\sigma,n}(\Omega)$ there exists a unique solution $(u,p) \in \rC^{1,\alpha}(\bar{\Omega},\C^d) \times \rC^{\alpha}_{\mathrm{mean-free}}(\bar{\Omega})$ of 
	\begin{equation*}
		\left\{
			\begin{aligned}
				\lambda u - \Delta u + \nabla p &= f, \\
				\div u &= 0, \\
				u|_{\partial \Omega} &= 0, 
			\end{aligned}
		\right.
	\end{equation*}
	satisfying the estimate
	\begin{equation*}
		(1+|\lambda|) \| u \|_{\rL^\infty(\Omega)}
		+ 
		(1+|\lambda|)^{\frac{1-\alpha}{2}} 
		\| \nabla u, p \|_{\rC^\alpha(\Omega)}
		\leqslant C_{\Omega} \cdot \| f \|_{\rL^\infty(\Omega)} .
	\end{equation*}
	Furthermore, the resolvent of the Stokes operator $A$ is compact. In particular, the Stokes operator $A$ is such that $-A$ generates a bounded, compact and analytic semigroup of angle $\frac{\pi}{2}$ on $\rL^\infty_{\sigma,n}(\Omega)$. 
    Furthermore, its restriction to $\rC_{\sigma,0}(\Omega)$ is a strongly continuous analytic $\rC_0$-semigroup with the same analyticity angle.
\end{thm}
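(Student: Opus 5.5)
The statement to be proved is \autoref{thm:stokes Linfty}. It is largely a compilation of known results, so the proof assembles them rather than building new machinery.

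\textbf{Step 1: the resolvent estimate.} For $\lambda\in\Sigma_\theta$ and $f\in\rL^\infty_{\sigma,n}(\Omega)$, I first solve the problem in $\rL^p_{\sigma,n}(\Omega)$ for some large $p<\infty$. This uses \autoref{prop:stokes Lp}, since $\rL^\infty_{\sigma,n}(\Omega)\subset \rL^p_{\sigma,n}(\Omega)$. This gives a unique $u\in D_p(A)$ and a pressure $p$, normalized to be mean-free. Uniqueness in the Hölder class then follows from uniqueness in $\rL^p$.

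The estimate $(1+|\lambda|)\|u\|_{\rL^\infty}\lesssim\|f\|_{\rL^\infty}$ is the Abe--Giga bound. On $\rC^{1,\alpha}$ domains I would invoke it in the form of Geng--Shen \cite{GS:25}, which requires only $\rC^1$ boundaries and gives uniformity for $\lambda\in\Sigma_\theta$.

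The Hölder part $(1+|\lambda|)^{\frac{1-\alpha}{2}}\|\nabla u,p\|_{\rC^\alpha}$ I would obtain from \autoref{prop:stokes Lp} with $s$ slightly below $1+\alpha+\frac1p$ and $p$ large. By the $\rH^\infty$-calculus and the moment inequality,
\[
\|A^{s/2}(\lambda+A)^{-1}f\|_{\rL^p}\lesssim |\lambda|^{s/2-1}\|f\|_{\rL^p}.
\]
Combining this with Sobolev embedding $\rH^{s,p}\hookrightarrow\rC^{1,\alpha'}$ gives the gradient bound. The exponent $\frac{1-\alpha}{2}$ matches $1-\frac{s}{2}$ with $s=1+\alpha$. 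The pressure is recovered from $\nabla p=f-\lambda u+\Delta u$ via a Bogovskii/Nečas-type estimate in negative Hölder spaces, or from the regularity statement in \cite{BG:25}.

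The delicate point is the small Hölder constant assumption. I expect it is needed exactly for the perturbation argument in \cite{BG:25} that gives $\rC^{1,\alpha}$ up to the boundary.

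\textbf{Step 2: compactness.} The resolvent maps $\rL^\infty_{\sigma,n}$ boundedly into $\rC^{1,\alpha}(\bar\Omega)$. This space embeds compactly into $\rC(\bar\Omega)$ by Arzelà--Ascoli, so the resolvent is compact.

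\textbf{Step 3: the semigroup.} The sectorial bound on every $\Sigma_\theta$, $\theta<\pi$, together with $0\in\rho(A)$, yields a bounded analytic semigroup of angle $\frac\pi2$ via the Dunford integral. The semigroup is compact because analytic semigroups with compact resolvent are compact for $t>0$.

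It is not $\rC_0$ on $\rL^\infty_{\sigma,n}$, since the domain is not dense there. On $\rC_{\sigma,0}(\Omega)$ the resolvent maps into $\rC^{1,\alpha}\cap\rC_{\sigma,0}$, so the space is invariant. Strong continuity follows from density of the domain in $\rC_{\sigma,0}$. To see density, $\lambda(\lambda+A)^{-1}\varphi\to\varphi$ for $\varphi\in\rC^\infty_{\sigma,c}$ by the estimate $\|(\lambda+A)^{-1}A\varphi\|_\infty\lesssim|\lambda|^{-1}$. We then conclude with \autoref{lem:density} and uniform boundedness.

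The main obstacle is the uniform $\rL^\infty$ resolvent bound. It is taken from \cite{AG:12,GS:25}, and a self-contained proof would require their blow-up argument.
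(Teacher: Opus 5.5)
The paper gives no proof of this theorem; it records it as a compilation of \cite{AG:12,AGH:13,GS:25,BG:25}. Your plan is in the same spirit, and Steps 2 and 3 are fine. The one estimate you try to derive yourself, the H\"older bound in Step 1, does not give what is stated.

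The scale in \autoref{prop:stokes Lp} stops at $s<1+\alpha+\frac1p$, a range dictated by the $\rC^{1,\alpha}$ boundary, so you cannot push $s$ further. Meanwhile $\rH^{s,p}\hookrightarrow\rC^{1,\beta}$ needs $\beta\leqslant s-1-\frac dp$. Hence you only reach $\nabla u\in\rC^{\beta}$ with $\beta<\alpha-\frac{d-1}{p}<\alpha$. With $\beta=s-1-\frac dp$, the weight you obtain is $(1+|\lambda|)^{1-\frac s2}=(1+|\lambda|)^{\frac{1-\beta}{2}-\frac{d}{2p}}$.

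Your remark that $\frac{1-\alpha}{2}$ matches $1-\frac s2$ for $s=1+\alpha$ overlooks the Sobolev loss: $\rH^{1+\alpha,p}$ only embeds into $\rC^{1,\alpha-d/p}$. Letting $p\to\infty$ gives every $\beta<\alpha$ and every power below $\frac{1-\beta}{2}$. It never gives the endpoint $\rC^{1,\alpha}$ with the power $\frac{1-\alpha}{2}$. That requires an $\rL^\infty$/Schauder-type theory for the Stokes resolvent and has to be imported, not extracted from the $\rL^p$ scale.

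The pressure step has the same kind of defect. Apply a Ne\v{c}as-type bound to $\nabla p=f-\lambda u+\Delta u$ and estimate $f-\lambda u$ termwise in $\rC^{\alpha-1}$. That term is then only $O(\|f\|_{\rL^\infty})$, so the weight $(1+|\lambda|)^{\frac{1-\alpha}{2}}$ is lost. You must use that $f-\lambda u\in\rL^\infty_{\sigma,n}(\Omega)$ annihilates gradients. Then $p$ is determined by $\Delta u$ alone through a Neumann problem, as in Abe--Giga.

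That said, the weaker bound you do obtain ($\rC^{1,\beta}$ for every $\beta<\alpha$, with a positive power of $1+|\lambda|$) already suffices for the rest. It gives compactness of the resolvent, invariance of $\rC_{\sigma,0}(\Omega)$, and all the semigroup assertions, which is all the paper uses later.

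One small point in Step 3: you use $\varphi\in D(A)$ for $\varphi\in\rC^\infty_{\sigma,c}(\Omega)$ without comment. This holds with $A\varphi=-\Delta\varphi$ and zero pressure, since $\Delta\varphi$ is itself divergence-free with compact support. It also makes the density of the domain of the part of $A$ in $\rC_{\sigma,0}(\Omega)$ immediate.
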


We finish this section with a partial $\rL^1$-result of Breit-Gaudin \cite{BG:25} and several consequences. Since the resolvent is compact  and the operator has its spectrum contained in the positive real line and is invertible, one even obtains an additional exponential decay.

\begin{lem}\label{lem:almost L1}
	Let $\Omega \subset \R^d$ be a bounded $\rC^{1,\alpha}$-domain with $\alpha > 0$. Let $\theta \in (0,\tfrac{\pi}{2})$. For all $z \in \Sigma_{\tfrac{\pi}{2}-\theta}$ and all $u \in \rL^1_{\sigma,n}(\Omega)$ we have for all $s \in (0,2+\alpha)$
	\begin{equation*}
		\mathrm{e}^{- z A} u \in \rW^{s,1}(\Omega,\C^d) \cap \rL^1_{\sigma,n}(\Omega) .
	\end{equation*}
	Furthermore, for all $s \in (0,2+\alpha)$, we have the estimate
	\begin{equation*}
		\| \mathrm{e}^{- z A} u \|_{\rW^{s,1}(\Omega)}
		\leqslant \frac{C \mathrm{e}^{-c\Re(z)}}{|z|^{\frac{s}{2}}} \| u \|_{\rL^1(\Omega)} .
	\end{equation*}
\end{lem}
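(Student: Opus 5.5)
We read the lemma as the statement that the Stokes semigroup, a priori defined on $\rL^p_{\sigma,n}(\Omega)$ for $p\in(1,\infty)$ through \autoref{prop:stokes Lp}, maps $\rL^1_{\sigma,n}(\Omega)$ into $\rW^{s,1}$ with the parabolic rate and an exponential factor. The plan is to factor the semigroup through an $\rL^p$ space with $p>1$ close to $1$, using the embedding $\rL^1 \hookrightarrow \rH^{-\varepsilon,p}$ together with duality. We split $\mathrm{e}^{-zA} = \mathrm{e}^{-\frac z2 A}\mathrm{e}^{-\frac z2 A}$. The first factor carries $u\in\rL^1_{\sigma,n}(\Omega)$ into a negative-order Stokes space. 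The second factor lifts regularity up to order $s<2+\alpha$ inside $\rL^p$. Finally, $\rH^{s',p}(\Omega)\hookrightarrow\rW^{s,1}(\Omega)$ on the bounded set $\Omega$ whenever $s'>s$. Here $\Sigma_{\pi/2-\theta}$ is the closed subsector on which $A$ is analytic with uniform bounds.

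\textbf{Step 1 ($\rL^1\to\rH^{-t,p}_\sigma$).} Choose $p$ close to $1$ and $t\in(d/p',\, 2-1/p')$. This interval is nonempty, since $d/p'\to 0$ as $p\to1$. Sobolev embedding gives $\rH^{t,p'}_{0}(\Omega)\hookrightarrow \rC(\bar\Omega)$, so $\rL^1_{\sigma,n}(\Omega)\hookrightarrow (\rH^{t,p'}_{\sigma,0}(\Omega))'$ for $t\in(1/p',1+1/p')$, or into the $\sigma,D$ version if $t>1+1/p'$. By \autoref{prop:stokes Lp}, the dual space is $D_p(A^{-t/2})$, realized as the extrapolation space $\rH^{-t,p}_\sigma(\Omega)$; the embedding is compatible because $\rC^\infty_{\sigma,c}$ is dense (\autoref{lem:density}). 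In this space the semigroup acts as an analytic semigroup consistent with the $\rL^2$ one.

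\textbf{Step 2 (smoothing).} The bounded $\rH^\infty$-calculus gives
$\|A^{\gamma}\mathrm{e}^{-zA}\|_{\rL^p_{\sigma,n}\to \rL^p_{\sigma,n}}\lesssim |z|^{-\gamma}\mathrm{e}^{-c\Re z}$ uniformly on the sector, with the exponential factor coming from invertibility ($0\notin\sigma(A)$). Taking $\gamma=(s'+t)/2$ with $s'\in(s,2+\alpha+1/p)$ yields
\[
\|\mathrm{e}^{-zA}u\|_{\rH^{s',p}}\lesssim |z|^{-(s'+t)/2}\mathrm{e}^{-c\Re z}\|u\|_{\rL^1}.
\]
Here we shift between the domain scales $D_p(A^{-t/2})$ and $D_p(A^{s'/2})$ and identify these with Sobolev spaces via \autoref{prop:stokes Lp}; this requires $s'<1+\alpha+1/p$, which holds after enlarging the range slightly by choosing $p$ near $1$. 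Since $\Omega$ is bounded, $\rH^{s',p}\hookrightarrow\rW^{s,1}$, and the solenoidal and boundary conditions persist. This gives membership in $\rW^{s,1}\cap\rL^1_{\sigma,n}$.

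\textbf{Step 3 (sharp rate; main obstacle).} The exponent $(s'+t)/2$ exceeds the claimed $s/2$ by roughly $(t+(s'-s))/2$. Since $t>d/p'$ can be made small, this loss is not zero, and removing it is the hardest step. To recover $s/2$ exactly, we would exploit the fact that $\mathrm{e}^{-zA}$ acts boundedly on $\rL^1_{\sigma,n}$ uniformly for $|z|\le1$, which follows by duality from \autoref{thm:stokes Linfty}, since $\rL^1_{\sigma,n}$ sits in the dual of $\rC_{\sigma,0}$ with $\rL^\infty$ bounds. We then interpolate, via real interpolation between $\rL^1$ and $\rW^{2+\alpha-\delta,1}$-type bounds, against a sharp high-order estimate. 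The high-order estimate would itself be obtained by iterating the resolvent: the $\rL^1$-resolvent bound $\|\lambda(\lambda+A)^{-1}\|\lesssim1$ combined with $\rW^{2,1}$ elliptic estimates for the Stokes resolvent in the $\rW^{s,1}$ scale, via $\rC^{1,\alpha}$ regularity and duality. If that fails, we fall back on the statement with an arbitrarily small loss in the exponent, which is what the $\rL^p$-factorization gives directly. For large $\Re z$, the factor $\mathrm{e}^{-c\Re z}$ is obtained by splitting off $\mathrm{e}^{-A}$ and using the spectral gap of the compact-resolvent operator on $\rL^p_{\sigma,n}$.
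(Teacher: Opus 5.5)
There is a genuine gap: you never obtain the sharp rate $|z|^{-s/2}$, and the mechanism you propose for it in Step 3 is false. Note first that the paper does not prove this lemma; it imports it as a known $\rL^1$-result of Breit--Gaudin. Your Steps 1--2 are essentially sound, once $t$ is restricted to $(d/p',1+1/p')$, the range in which \autoref{prop:stokes Lp} identifies $D_p(A^{-t/2})$. But they only yield $\|\mathrm{e}^{-zA}u\|_{\rW^{s,1}(\Omega)}\leqslant C_\delta|z|^{-\frac{s}{2}-\delta}\mathrm{e}^{-c\Re(z)}\|u\|_{\rL^1(\Omega)}$ for every $\delta>0$. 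For fixed $p>1$ this loss is intrinsic to factoring through $\rL^p$. Take a unit-mass datum in $\rC^\infty_{\sigma,c}(\Omega)$ concentrated at scale $\sqrt{|z|}$ around an interior point. Then $\mathrm{e}^{-zA}u$ is locally close to a heat evolution and has $\rH^{s',p}$-size of order $|z|^{-\frac{s'}{2}-\frac{d}{2p'}}$. So no chain $\rL^1\to\rH^{s',p}(\Omega)\hookrightarrow\rW^{s,1}(\Omega)$ can produce the exponent $s/2$. (This weaker bound would still suffice for the resolvent corollary that follows and for the invariance claim in \autoref{thm:sun-stokes}, but it is not the lemma.)

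Step 3 rests on two claims that the paper itself refutes. The first is that $\mathrm{e}^{-zA}$ is uniformly bounded on $\rL^1_{\sigma,n}(\Omega)$ for $|z|\leqslant 1$. Combined with strong continuity on the dense subspace $\rL^2_{\sigma,n}(\Omega)$, this would make $-A$ the generator of a $\rC_0$-semigroup on $\rL^1_{\sigma,n}(\Omega)$. The second is that $\|\lambda(\lambda+A)^{-1}\|_{\mathcal{L}(\rL^1_{\sigma,n}(\Omega))}\lesssim 1$, which is exactly the estimate assumed and contradicted in the proof of \autoref{cor:L_1-stokes}. For the same reason, the constant $C=C_s$ in the lemma cannot stay bounded as $s\downarrow 0$.

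Your duality justification fails because $\rC_{\sigma,0}(\Omega)'=\nicefrac{M(\Omega,\C^d)}{\nabla\mathrm{BV}(\Omega)}$ by \autoref{lem:X'}. Dualizing the $\rC_{\sigma,0}$-semigroup therefore only controls the quotient norm $\|[\mathrm{e}^{-zA}u]\|_{\nicefrac{\rL^1(\Omega,\C^d)}{\nabla\rW^{1,1}(\Omega)}}$; this is precisely the sun-dual semigroup of \autoref{thm:sun-stokes}. That quotient norm is not equivalent to $\|\cdot\|_{\rL^1}$ on $\rL^1_{\sigma,n}(\Omega)$. If it were, the dense image of $\rL^1_{\sigma,n}(\Omega)$ would be closed, hence all of the quotient. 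That would give the $\rL^1$-Helmholtz decomposition excluded by \autoref{prop:L1 Helmholtz always fails}.

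Consequently, the real interpolation you sketch has no valid $\rL^1$ endpoint. The high-order $\rW^{s,1}$ bound you intend to build from an $\rL^1$-resolvent estimate is unfounded as well. The exact exponent $s/2$ requires estimates that work genuinely at the $\rL^1$ scale while staying away from the false $s=0$ endpoint, and this is what the paper takes from Breit--Gaudin. Your fallback is a correct but strictly weaker statement.
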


In particular, we already know that $\mathrm{e}^{- z A}$ is well-defined and uniquely determined on $\rL^1_{\sigma,n}(\Omega)$. The near equivalence between holomorphic semigroup and resolvent estimates leads to the following result

\begin{cor}Let $\Omega \subset \R^d$ be a bounded $\rC^{1,\alpha}$-domain with $\alpha > 0$. Let $\theta \in (0,\pi)$. For all $\lambda \in \Sigma_{\theta}$ and all $f \in \rL^1_{\sigma,n}(\Omega)$ there exists a unique solution $(u,p) \in \rW^{2^-,1}({\Omega},\C^d) \times \rW^{1^-,1}_{\mathrm{mean-free}}({\Omega})$ of 
	\begin{equation*}
		\left\{
			\begin{aligned}
				\lambda u - \Delta u + \nabla p &= f, \\
				\div u &= 0, \\
				u|_{\partial \Omega} &= 0, 
			\end{aligned}
		\right.
	\end{equation*}
	satisfying the estimate, for any $\varepsilon\in(0,1]$,
	\begin{equation*}
		(1+|\lambda|)^{1-\frac{\varepsilon}{2}} \| u \|_{\rL^1(\Omega)}+
		(1+|\lambda|)^{\frac{\varepsilon}{2}} 
		\| \nabla u\|_{\rW^{1-\varepsilon,1}(\Omega)} + (1+|\lambda|)^{-\frac{\varepsilon}{2}}\| p\|_{\rW^{1-\varepsilon,1}(\Omega)}
		\leqslant C_{\Omega}^{\varepsilon} \cdot \| f \|_{\rL^1(\Omega)} .
	\end{equation*}
\end{cor}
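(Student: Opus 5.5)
The plan is to obtain the resolvent as the Laplace transform of the holomorphic semigroup from \autoref{lem:almost L1}, and then to identify it with the solution operator of the Stokes resolvent problem. Fix $\theta\in(0,\pi)$ and $\lambda\in\Sigma_\theta$, and choose a ray $\arg z=\phi$ with $|\phi|<\tfrac{\pi}{2}$ such that $\Re(\lambda e^{i\phi})>0$. This is possible because $|\arg\lambda|<\theta<\pi$. For $f\in\rL^1_{\sigma,n}(\Omega)$ we set
\begin{equation*}
u:=\int_0^{\infty e^{i\phi}} e^{-\lambda z}e^{-zA}f\,\mathrm{d}z .
\end{equation*}
By \autoref{lem:almost L1}, the integrand in $\rW^{s,1}$ is bounded by $C e^{-\Re(\lambda z)-c\Re z}|z|^{-s/2}\|f\|_{\rL^1}$. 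For $s<2$ this is integrable at $0$, so $u\in\rW^{s,1}(\Omega)$ for every $s<2$. The scaling $t=|\lambda| |z|$ then gives
\begin{equation*}
\|u\|_{\rW^{s,1}}\lesssim (1+|\lambda|)^{\frac{s}{2}-1}\|f\|_{\rL^1}.
\end{equation*}
To absorb the small $|\lambda|$ range into the $(1+|\lambda|)$ powers I will use the exponential factor $e^{-c\Re z}$. Taking $s=0$ (or $s$ small and then embedding) and $s=2-\varepsilon$ yields the bounds for $u$ and for $\nabla u\in\rW^{1-\varepsilon,1}$. The constants are uniform on $\Sigma_\theta$ after a finite choice of rays $\phi$.

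Next I will show that $u$ solves the system with a suitable pressure. First take $f\in\rC^\infty_{\sigma,c}(\Omega)$. Then $e^{-zA}f$ coincides with the $\rL^p$ Stokes semigroup for every $p\in(1,\infty)$, since the semigroups are consistent. By \autoref{prop:stokes Lp}, $u=(\lambda+A)^{-1}f$ in $\rL^p_{\sigma,n}$, so there is a mean-free pressure $p$ with $\lambda u-\Delta u+\nabla p=f$, $\div u=0$ and $u|_{\partial\Omega}=0$.

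The pressure estimate is the hard part. The formula $\nabla p=f-\lambda u+\Delta u$ only places $\nabla p$ in $\rW^{-\varepsilon,1}$ with norm $\lesssim(1+|\lambda|)^{-\varepsilon/2}\ldots$. This is not immediate, because $\|\lambda u\|_{\rL^1}$ alone is $O(1)\|f\|$. What I will do instead is estimate the pressure in $\rW^{-\varepsilon,1}$ through $\|\lambda u\|_{\rW^{-\varepsilon,1}}\lesssim|\lambda|(1+|\lambda|)^{-1-\varepsilon/2}\|f\|$. The spaces $\rW^{-\varepsilon,1}$ are understood via the $\rW^{s,1}$ version of \autoref{lem:almost L1} extended to negative $s$ by duality with the $\rC_{\sigma,0}$ theory of \autoref{thm:stokes Linfty}. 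With this, $\Delta u\in\rW^{-\varepsilon,1}$ is bounded by $(1+|\lambda|)^{\varepsilon/2}$.

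The gradient $\nabla p$ then has a bound in $\rW^{-\varepsilon,1}$, and a Ne\v{c}as-type inequality $\|p\|_{\rW^{1-\varepsilon,1}}\lesssim\|\nabla p\|_{\rW^{-\varepsilon,1}}$ for mean-free $p$ gives the pressure term. This inequality holds on Lipschitz domains for these fractional orders, by the Bogovski\u{\i} operator on the dual side. I expect this step to be the main obstacle: the weight $(1+|\lambda|)^{-\varepsilon/2}$ is only a weak decay, so the pressure estimate essentially reduces to $\|\nabla p\|_{\rW^{-\varepsilon,1}}\lesssim(1+|\lambda|)^{\varepsilon/2}\|f\|$, controlled by the $\Delta u$ term. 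For general $f$, density from \autoref{lem:density} and the uniform estimates extend both the solution and the estimate.

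For uniqueness, let $(u,p)$ in the stated class solve the problem with $f=0$. I will test against $(\bar\lambda+A)^{-1}\varphi$ for $\varphi\in\rC^\infty_{\sigma,c}(\Omega)$. By \autoref{thm:stokes Linfty} this function lies in $\rC^{1,\alpha}$ with a $\rC^\alpha$ pressure, so the pairing with $u\in\rW^{2^-,1}$ and $p\in\rW^{1^-,1}$ is justified by integration by parts; the boundary terms vanish by the no-slip conditions and $\div=0$. This gives $\langle u,\varphi\rangle=0$, hence $u=0$ and then $\nabla p=0$, so $p=0$ because it is mean-free.
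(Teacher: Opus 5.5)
Your overall route is the paper's: the Laplace transform of the holomorphic semigroup of \autoref{lem:almost L1} along a rotated ray, the bounds with $s=\varepsilon$ (followed by $\rW^{\varepsilon,1}\hookrightarrow\rL^1$) and with $s=2-\varepsilon$, smooth data first and then density, and the pressure read off from the equation via a Ne\v{c}as-type inequality. However, the pressure step as you wrote it rests on a false estimate. Extending \autoref{lem:almost L1} to $s=-\varepsilon$ would mean $\|e^{-zA}f\|_{\rW^{-\varepsilon,1}}\lesssim|z|^{\varepsilon/2}\|f\|_{\rL^1}$, and this is what produces your bound $\|\lambda u\|_{\rW^{-\varepsilon,1}}\lesssim|\lambda|(1+|\lambda|)^{-1-\varepsilon/2}\|f\|_{\rL^1}$. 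That bound cannot hold. For $0\neq f\in\rC^\infty_{\sigma,c}(\Omega)$ one has $\lambda(\lambda+A)^{-1}f\to f$ in $\rL^2$, hence in $\rW^{-\varepsilon,1}$, as $\lambda\to+\infty$, while your right-hand side tends to $0$. Dualizing the $\rC_{\sigma,0}$-estimates of \autoref{thm:stokes Linfty} only gives bounds with nonpositive powers of $|z|$ (smoothing), never a positive one. The same misreading shows up in two other places. One is your option $s=0$. The other is your claim that $\|\lambda u\|_{\rL^1}$ is $O(1)\|f\|_{\rL^1}$. A uniform bound $\lambda\|R(\lambda,-A)f\|_{\rL^1}\leqslant C\|f\|_{\rL^1}$ is exactly what the proof of \autoref{cor:L_1-stokes} refutes, and it is why the statement carries the $\varepsilon$-loss.

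The detour is also unnecessary. Your own $s=\varepsilon$ bound gives $|\lambda|\,\|u\|_{\rL^1}\lesssim|\lambda|(1+|\lambda|)^{-1+\frac{\varepsilon}{2}}\|f\|_{\rL^1}\leqslant(1+|\lambda|)^{\frac{\varepsilon}{2}}\|f\|_{\rL^1}$. So, exactly as in the paper,
\[
\|\nabla p\|_{\rW^{-\varepsilon,1}(\Omega)}\lesssim|\lambda|\,\|u\|_{\rL^1(\Omega)}+\|\nabla u\|_{\rW^{1-\varepsilon,1}(\Omega)}+\|f\|_{\rL^1(\Omega)}\lesssim(1+|\lambda|)^{\frac{\varepsilon}{2}}\|f\|_{\rL^1(\Omega)},
\]
and the Ne\v{c}as inequality for mean-free $p$ finishes the estimate. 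The dominant term is $\lambda u$, not $\Delta u$; by the $s=2-\varepsilon$ bound, $\Delta u$ is even $O((1+|\lambda|)^{-\varepsilon/2})$.

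The paper does not argue uniqueness separately, and your duality argument is a sensible addition, but it needs three repairs.
\begin{enumerate}[(i)]
\item Do not integrate by parts twice: $\nabla u\in\rW^{1-\varepsilon,1}$ has no boundary trace, and $w=(\bar\lambda+A)^{-1}\varphi$ is only $\rC^{1,\beta}$ up to $\partial\Omega$. Instead combine two weak identities. First, $\lambda\int_\Omega u\cdot\bar w+\int_\Omega\nabla u:\nabla\bar w=0$, obtained by testing the $u$-equation with cut-offs $\chi_k w$; these have bounded gradients since $|w|\lesssim\mathrm{dist}(\cdot,\partial\Omega)$. Second, $\int_\Omega u\cdot\bar\varphi=\lambda\int_\Omega u\cdot\bar w+\int_\Omega\nabla u:\nabla\bar w$, obtained by testing the $w$-equation with $u\in\rW^{1,1}_0$, $\div u=0$.
\item Vanishing of $\int_\Omega u\cdot\bar\varphi$ for all solenoidal $\varphi$ only makes $u$ a gradient. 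You then need $\rL^1_{\sigma,n}(\Omega)\cap\nabla\rW^{1,1}(\Omega)=\{0\}$ to conclude $u=0$.
\item \autoref{thm:stokes Linfty} assumes a small H\"older constant. Take the $\rC^{1,\beta}$-regularity of $w$ and the boundedness of its pressure from \autoref{prop:stokes Lp} with $p$ large instead.
\end{enumerate}
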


\begin{proof} Since the semigroup is holomorphic of angle $\frac{\pi}{2}$, up to multiplying $A$ by $e^{i\pm\phi}$, $\phi\in[0,\pi/2)$, and since the semigroup has exponential decay, up to a small translation by the identity, we can assume w.l.o.g. $\Re(\lambda) \geqslant 0$. Let $f\in \rL^2_{\sigma,n}(\Omega)\subset \rL^1_{\sigma,n}(\Omega)$, then for $\lambda\in\C$ such that $\Re (\lambda) \geqslant 0$, it holds
\begin{align*}
    R(\lambda,-A)f =\int^{\infty}_0 \mathrm{e}^{-\lambda t}\mathrm{e}^{-t A}f \,\mathrm{d}t.
\end{align*}

Notice that for $u:=R(\lambda,-A)f\in\rH^{1,2}_{0,\sigma}(\Omega)$, there exists a unique $p\in\rL^2(\Omega)$, mean-free, such that
\begin{equation*}
		\left\{
			\begin{aligned}
				\lambda u - \Delta u + \nabla p &= f, \\
				\div u &= 0, \\
				u|_{\partial \Omega} &= 0.
			\end{aligned}
		\right.
\end{equation*}

Let $\varepsilon\in(0,1]$, by the embedding $\rW^{\varepsilon,1}(\Omega)\hookrightarrow\rL^1(\Omega)$ and \autoref{lem:almost L1},
\begin{align*}
    \lVert u\rVert_{\rL^1(\Omega)} &\leqslant \int_{0}^{\infty}\lVert \mathrm{e}^{-t\lambda }\mathrm{e}^{-t A}f\rVert_{\rL^1(\Omega)} \, \mathrm{d}t\leqslant C^{\varepsilon}_{\Omega} \int_{0}^{\infty}\lVert \mathrm{e}^{-t\lambda }\mathrm{e}^{-t A}f\rVert_{\rW^{\varepsilon,1}(\Omega)}\,  \mathrm{d}t\\
    &\leqslant C^{\varepsilon}_{\Omega} \int_{0}^{\infty} \mathrm{e}^{-t \Re(\lambda) } \frac{e^{-ct}}{t^{\varepsilon/2}}\lVert f\rVert_{\rL^{1}(\Omega)} \, \mathrm{d}t \leqslant C^{\varepsilon}_{\Omega} \frac{1}{(1 + \Re (\lambda))^{1-\frac{\varepsilon}{2}}}\lVert f\rVert_{\rL^{1}(\Omega)}.
\end{align*}
Similarly,
\begin{align*}
    \lVert u\rVert_{\rW^{2-\varepsilon,1}(\Omega)} &\leqslant C^{\varepsilon}_{\Omega} \frac{1}{(1 + \Re (\lambda))^{\frac{\varepsilon}{2}}}\lVert f\rVert_{\rL^{1}(\Omega)}.
\end{align*}
Finally, we have obtained
\begin{align*}
    (1+|\lambda|)^{1-\frac{\varepsilon}{2}} \| u \|_{\rL^1(\Omega)}+
		(1+|\lambda|)^{\frac{\varepsilon}{2}} 
		\| \nabla u\|_{\rW^{1-\varepsilon,1}(\Omega)}
		\leqslant C_{\Omega}^{\varepsilon} \cdot \| f \|_{\rL^1(\Omega)} .
\end{align*}

Now in order to estimate the pressure $p$, we use the equation and the previous estimates and we obtain
\begin{align*}
    \lVert \nabla p\rVert_{\rW^{-\varepsilon,1}(\Omega)} &\leqslant C^{\varepsilon}_{\Omega} (|\lambda|\| u\|_{\rW^{-\varepsilon,1}(\Omega)}+\| \nabla u\|_{\rW^{1-\varepsilon,1}(\Omega)}+ \lVert f\rVert_{\rW^{-\varepsilon,1}(\Omega)})\\
     &\leqslant C^{\varepsilon}_{\Omega} (|\lambda|^{\varepsilon/2}+1)\lVert f\rVert_{\rL^{1}(\Omega)}.
\end{align*}
This finishes the proof, since $\rL^2_{\sigma,n}(\Omega)\subset \rL^1_{\sigma,n}(\Omega)$ is strongly dense.
\end{proof}

The structure of the proof allows us to consider the closure of the $\rL^2_{\sigma,n}(\Omega)$--operator to define the Stokes operator on $\rL^1_{\sigma,n}(\Omega)$ with an explicit description of the domain as an immediate consequence.

\begin{cor}\label{cor:Stokes Operator on L1}Let $\Omega \subset \R^d$ be a bounded $\rC^{1,\alpha}$-domain with $\alpha > 0$. The Stokes operator ${A} \,\colon\, D_1({A}) \subset \rL^1_{\sigma,n}(\Omega) \to \rL^1_{\sigma,n}(\Omega)$  with no-slip boundary conditions defined by
	\begin{align*}
		{A} u &= -\Delta u+\nabla p,\quad  \text{ with domain }\\
		 D_1({A}) &= \{ u \in  \rL^1_{\sigma,n}\cap\rW^{1,1}_0\cap\rW^{2^-,1}(\Omega,\C^d)\, \colon\, \exists p\in\rW^{1^-,1}(\Omega), \text{ s.t. }\Delta u-\nabla p \in \rL^1_{\sigma,n}(\Omega)  \},
	\end{align*}
    yields a densely defined closed and invertible operator on $\rL^1_{\sigma,n}(\Omega)$. Furthermore, there exists $a_\ast>0$ such that $\sigma(A)\subset[a_\ast,\infty)$. 
\end{cor}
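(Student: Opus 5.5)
Here the operator $A$ on $\rL^1_{\sigma,n}(\Omega)$ is defined as the closure of the $\rL^2_{\sigma,n}(\Omega)$-Stokes operator, regarded as an operator in $\rL^1_{\sigma,n}(\Omega)$. The task is then to identify this closure with the explicit domain $D_1(A)$ and to read off invertibility and the spectral bound from the resolvent estimate of the preceding corollary.

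\textbf{Step 1: the resolvent operator.} For $\lambda\in\Sigma_\theta\cup\{0\}$, the solution map $f\mapsto u$ of the preceding corollary is first defined on $\rL^2_{\sigma,n}(\Omega)$ and extends by density to a bounded operator $R_\lambda$ on $\rL^1_{\sigma,n}(\Omega)$. For $\lambda=0$ one uses the exponential decay in \autoref{lem:almost L1}, namely $\int_0^\infty\mathrm{e}^{-tA}\,\mathrm{d}t$. This extension maps into $\rW^{2-\varepsilon,1}(\Omega,\C^d)\cap\rW^{1,1}_0(\Omega,\C^d)$. The associated pressure map $f\mapsto p$ extends into $\rW^{1-\varepsilon,1}(\Omega)$. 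All equations, including $\div u=0$ and $u|_{\partial\Omega}=0$, pass to the limit in distributions and traces. The resolvent identity and injectivity of $R_\lambda$ also pass to the limit. Injectivity follows by testing against $\rC^\infty_{\sigma,c}(\Omega)$ and using the $\rL^\infty$-theory of \autoref{thm:stokes Linfty} as the adjoint: if $R_0f=0$, then $\langle f,\varphi\rangle=\langle R_0f,A_\infty\varphi\rangle=0$ for $\varphi$ in a dense class.

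\textbf{Step 2: closure and domain identification.} Set $A:=R_0^{-1}$ on $D:=\mathrm{Ran}(R_0)$. This operator is closed because $R_0$ is bounded and injective. It is densely defined because $\mathrm{Ran}(R_0)\supset D_2(A)$, which is dense in $\rL^2_{\sigma,n}(\Omega)$ and hence in $\rL^1_{\sigma,n}(\Omega)$ by \autoref{lem:density}. Since $A_2$ has a dense range, $D$ coincides with the closure of the graph of $A_2$.

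The main obstacle is showing $D=D_1(A)$. The inclusion $\subset$ follows from Step 1. For $\supset$, take $u$ in the displayed set and put $f:=-\Delta u+\nabla p\in\rL^1_{\sigma,n}(\Omega)$. Then set $w:=u-R_0f$. The pair $(w,q)$ solves the homogeneous Stokes system with $w\in\rW^{2-\varepsilon,1}\cap\rW^{1,1}_0$ and $q\in\rW^{1-\varepsilon,1}$. Uniqueness requires a duality argument: test with $v=R_0^{\infty}\psi$ for $\psi\in\rC^\infty_{\sigma,c}(\Omega)$, where $v\in\rC^{1,\alpha}$ with pressure in $\rC^\alpha$ by \autoref{thm:stokes Linfty}. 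Integrating by parts twice gives $\langle w,\psi\rangle=0$. This step is delicate, because the boundary terms involve the traces of $\nabla w$ and $q$, which exist only in a weak sense. I would justify the integration by parts through the $\rW^{2-\varepsilon,1}$ and $\rW^{1-\varepsilon,1}$ regularity: pair $\rW^{1-\varepsilon,1}$ with $\rC^\alpha$ via $\varepsilon<\alpha$, or approximate $(w,q)$. Alternatively, one can appeal to the uniqueness statement already contained in the preceding corollary.

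\textbf{Step 3: spectrum.} For $\lambda\in\Sigma_\theta$ with $\theta\in(0,\pi)$ arbitrary, $R_\lambda=(\lambda+A)^{-1}$, so $\sigma(A)\subset[0,\infty)$, and $0\in\rho(A)$. Since $\rho(A)$ is open, a neighbourhood of $0$ is resolvent, which gives $a_\ast>0$. More precisely, the exponential decay $\mathrm{e}^{-c\Re z}$ in \autoref{lem:almost L1} shows that $\int_0^\infty \mathrm{e}^{\mu t}\mathrm{e}^{-tA}\,\mathrm{d}t$ converges for $\mu<c$. Hence $\sigma(A)\subset[c,\infty)$, and one can take $a_\ast=c$.
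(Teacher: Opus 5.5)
Your proposal is correct and follows the paper's route. There, too, $A$ on $\rL^1_{\sigma,n}(\Omega)$ is the $\rL^1$-closure of the $\rL^2$-Stokes operator, obtained by extending the resolvent bounds of the preceding corollary by density, and $D_1(A)$ is read off from the $\rW^{2-\varepsilon,1}\times\rW^{1-\varepsilon,1}$ regularity; the paper simply calls this an immediate consequence.

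You supply details the paper leaves implicit: injectivity of the extended resolvent, and the uniqueness needed for $D_1(A)\subset\mathrm{Ran}(R_0)$, both by duality. The uniqueness step only needs the weak formulation, pairing $\nabla w,q\in\rL^1$ against $\nabla v,\pi_v\in\rL^\infty$ after a boundary cut-off, so no traces of $\nabla w$ or $q$ arise. For $a_\ast>0$ you use openness of $\rho(A)$ or the exponential decay, whereas the paper's subsequent remark uses compactness of the resolvent and $p$-independence of the spectrum.
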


\begin{rem}
    Since the Stokes operator $A$ has compact resolvent on $\rL^p_{\sigma,n}(\Omega)$ for $p \in [1,\infty)$, due to the boundedness of $\Omega$ and self-adjointness, their spectra consist only of discrete eigenvalues and they coincide, i.e. the spectrum of the Stokes operator $A$ is independent of $p \in [1,\infty)$. Since the Stokes operator $A$ on  $\rL^2_{\sigma,n}(\Omega)$ is self-adjoint all eigenvalues are real. This implies that the spectral gap $a_\ast>0$ from \autoref{cor:Stokes Operator on L1} is the smallest eigenvalue of $A$ and can be determined from the $\rL^2$-theory of the Stokes operator.
\end{rem}

\begin{rem}\label{SchaftingenRefinement}
    Actually, there is a slightly better amount of integrability available for the gradient of the solution. It holds
    \begin{align*}
        D_1({A}) \subset\rW^{1,\frac{d}{d-1}}_{\sigma,0}(\Omega),
    \end{align*}
    which is unreachable via the description above and standard Sobolev embeddings.
    Indeed, any $f\in\rL^1_{\sigma,n}(\Omega)$ can be extended by $0$ to the whole $\R^d$ as a divergence-free vector field, so that by \cite[Corollary 1.4]{VanScha:04},
    \begin{align*}
        \Bigg|\int_{\Omega} f\cdot\varphi \Bigg| = \Bigg|\int_{\R^d} f\cdot\varphi \Bigg| \leqslant C \cdot \| f\|_{\rL^1(\Omega)} \cdot \| \nabla \varphi\|_{\rL^{d}(\Omega)},\quad\forall \varphi\in\rC_{c}^\infty(\Omega,\C^d).
    \end{align*}
    Hence, $f\in \rL^1_{\sigma,n}(\Omega) \hookrightarrow \rW^{1,d}_{0}(\Omega,\C^d)' = \rW^{-1,\frac{d}{d-1}}(\Omega,\C^d)$. A duality argument, writing $f=\div F$ for $F\in\rL^{\frac{d}{d-1}}(\Omega,\C^{d\times d})$, and  $u = A^{-1}\mathbb{P} \div F = A^{-1/2} (\nabla A^{-1/2})'  F $ in combination with \autoref{prop:stokes Lp}, yields $u\in \rW^{1,\frac{d}{d-1}}_{\sigma,0}(\Omega)$. By linearity, one can recover $p\in\rL^{\frac{d}{d-1}}(\Omega)$.
\end{rem}

\section{The $\rL^1$-Stokes semigroup}
\label{sec:main part}

In this section we study the Stokes operator on $\rL^1(\Omega,\C^d)$. We start with a discussion of the failure of the Helmholtz decomposition in $\rL^1(\Omega,\C^d)$.

\subsection{The Failure of the Helmholtz decomposition in $\rL^1(\Omega,\C^d)$ revisited}
\label{ssec:L1 no helmholtz}

\

It is a well-known fact, see \cite{BB:02}, that the Helmholtz decomposition fails in $\rL^1(\Omega,\C^d)$, i.e.
\begin{equation}
	\rL^1(\Omega,\C^d) \not = \rL^1_{\sigma,n}(\Omega) + \nabla \rW^{1,1}(\Omega) . \label{eq:no L1 helmholtz decomposition}
\end{equation}
In this section we will analyse this failure in more detail. We first note that both summands are closed and disjoint. More precisely, we obtain the following result. 

\begin{lem}\label{lem:L1,gradients subspace M}
	Let $\Omega \subset \R^d$ be a bounded Lipschitz domain. Then $\rL^1_{\sigma,n}(\Omega), \nabla \rW^{1,1}(\Omega)$ and $\nabla \mathrm{BV}(\Omega)$ are proper closed subspaces of $M(\Omega,\C^d)$ and, if additionally $\Omega$ has a $\rC^{1,\alpha}$ boundary, $\alpha>0$, one has
	\begin{equation*}
		\rL^1_{\sigma,n}(\Omega) \cap \nabla \mathrm{BV}(\Omega)
		= 
		\rL^1_{\sigma,n}(\Omega) \cap \nabla \rW^{1,1}(\Omega)
		= \{ 0 \} .
	\end{equation*}
\end{lem}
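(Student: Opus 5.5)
We split the statement into closedness/properness and the trivial-intersection claim.

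\emph{Closedness.} Since $\rL^1(\Omega,\C^d)\hookrightarrow M(\Omega,\C^d)$ is isometric, $\rL^1(\Omega,\C^d)$ is closed in $M(\Omega,\C^d)$. Moreover, $\rL^1_{\sigma,n}(\Omega)$ is closed in $\rL^1$, because it is the annihilator of $\{\nabla\phi:\phi\in\rC^1(\bar\Omega)\}$: the conditions $\div u=0$ and $u\cdot\nu=0$ are encoded weakly by $\int_\Omega u\cdot\nabla\phi=0$ for all $\phi\in\rC^1(\bar\Omega)$. Hence $\rL^1_{\sigma,n}(\Omega)$ is closed in $M(\Omega,\C^d)$.

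For $\nabla\mathrm{BV}(\Omega)$ we use the Poincaré inequality on the bounded Lipschitz (connected) domain, $\|u-\bar u\|_{\rL^1}\leqslant C|\nabla u|(\Omega)$. Thus $\nabla$ is an isomorphism from mean-free $\mathrm{BV}$ onto its range, and this range is closed because mean-free $\mathrm{BV}$ is complete. The same argument applied to $\rW^{1,1}$ gives closedness of $\nabla\rW^{1,1}(\Omega)$ in $\rL^1$, and hence in $M$.

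\emph{Properness.} A Dirac mass $\delta_{x_0}e_1$ lies in $M$ but not in $\rL^1$, so $\rL^1_{\sigma,n}$ and $\nabla\rW^{1,1}$ are proper. For $\nabla\mathrm{BV}$, take any smooth compactly supported $\psi$ with nonzero curl. Then $\psi\notin\nabla\mathrm{BV}$, since a gradient measure is curl-free in the sense of distributions.

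\emph{Trivial intersection.} Since $\rL^1_{\sigma,n}\subset\rL^1$, any element of $\rL^1_{\sigma,n}\cap\nabla\mathrm{BV}$ is an absolutely continuous gradient. So if $\nabla w\in \rL^1$ with $w\in\mathrm{BV}$, then $w\in\rW^{1,1}$, and the two intersections coincide.

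Now let $u=\nabla w\in\rL^1_{\sigma,n}$ with $w\in\rW^{1,1}(\Omega)$. Then $\Delta w=0$ in $\mathcal D'(\Omega)$, and $\partial_\nu w=0$ holds weakly, meaning
\[
\int_\Omega\nabla w\cdot\nabla\phi=0\qquad\text{for all }\phi\in\rC^1(\bar\Omega).
\]
So $w$ is a very weak $\rW^{1,1}$ solution of the homogeneous Neumann problem, and we must show it is constant. This is where the $\rC^{1,\alpha}$ regularity is needed; it is the main obstacle, since an energy argument is unavailable when $\nabla w$ is only $\rL^1$.

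The plan is a duality argument against the $\rL^{p'}$ Neumann theory, with $p'$ large. Fix $g\in\rC^\infty_c(\Omega,\C^d)$. On a bounded $\rC^{1,\alpha}$ domain, solve the weak Neumann problem
\[
\int_\Omega\nabla\phi\cdot\nabla\eta=\int_\Omega g\cdot\nabla\eta
\]
with $\phi\in\rW^{1,q}$ for every $q<\infty$. Schauder theory for $\rC^{1,\alpha}$ domains gives in fact $\phi\in\rC^{1,\beta}(\bar\Omega)$; alternatively, invoke the $\rL^q$-Helmholtz decomposition for $q>d$, which is available for $\rC^1$ domains. Since $\phi\in\rC^{1}(\bar\Omega)$ is an admissible test function, $\int_\Omega\nabla w\cdot\nabla\phi=0$.

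The symmetric identity should give $\int_\Omega\nabla w\cdot g=\int_\Omega\nabla w\cdot\nabla\phi$, i.e. we want to test the equation for $\phi$ with $\eta=w$. To justify this, approximate $w$ in $\rW^{1,1}$ by functions $w_k\in\rC^1(\bar\Omega)$, which is possible on Lipschitz domains. For each $k$,
\[
\int_\Omega\nabla\phi\cdot\nabla w_k=\int_\Omega g\cdot\nabla w_k .
\]
Since $\nabla\phi\in\rL^\infty$, we can pass to the limit $k\to\infty$ and obtain
\[
\int_\Omega g\cdot\nabla w=\int_\Omega\nabla\phi\cdot\nabla w=0 .
\]
As $g$ is arbitrary, $\nabla w=0$, so $u=0$.

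The key step is the $\rC^1(\bar\Omega)$, or $\rW^{1,\infty}$, regularity of the Neumann solution $\phi$, which I would take from known Schauder estimates on $\rC^{1,\alpha}$ domains.
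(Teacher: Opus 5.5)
Your proof is correct. It follows the same overall structure as the paper: closedness via the Poincar\'e--Wirtinger inequality, and the trivial intersection via uniqueness for the homogeneous Neumann problem. Where it differs is in how you justify that uniqueness step.

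The paper simply asserts uniqueness ``by Sobolev embeddings''. Implicitly, it moves $p$ into a space with integrability exponent $r>1$ and invokes $\rL^r$-type Neumann theory there. You instead prove uniqueness directly in the class $\rW^{1,1}$ by $\rL^1$--$\rL^\infty$ duality. You test $\nabla w$ against Neumann solutions $\phi\in\rC^{1,\beta}(\bar\Omega)$ with divergence-form data $g\in\rC^\infty_c(\Omega,\C^d)$, and extend the weak formulation of $\phi$ to $\eta=w$ by density of $\rC^1(\bar\Omega)$ in $\rW^{1,1}(\Omega)$.

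This makes your argument self-contained, and it pinpoints exactly where the $\rC^{1,\alpha}$ boundary enters: Schauder regularity of the Neumann problem. That is the same ingredient the paper itself uses when it shows that $\mathbb{P}$ is densely defined on $\rC(\bar\Omega,\C^d)$. The paper's version is shorter but leaves the function-class bookkeeping implicit.

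You also make explicit two points the paper glosses over:
\begin{itemize}
\item the properness of $\nabla\mathrm{BV}(\Omega)$ in $M(\Omega,\C^d)$, via the curl;
\item the coincidence of the two intersections, since an absolutely continuous $Dw$ forces $w\in\rW^{1,1}(\Omega)$.
\end{itemize}

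One correction: drop the remark ``alternatively, invoke the $\rL^q$-Helmholtz decomposition for $q>d$''. For $q<\infty$, that theory only gives $\nabla\phi\in\bigcap_{q<\infty}\rL^q(\Omega)$, not $\nabla\phi\in\rL^\infty(\Omega)$. You need $\nabla\phi\in\rL^\infty$ (indeed $\phi\in\rC^1(\bar\Omega)$) for two reasons:
\begin{itemize}
\item to use $\phi$ as an admissible test function in $\int_\Omega\nabla w\cdot\nabla\phi=0$ against $\nabla w\in\rL^1$;
\item to pass to the limit $k\to\infty$ in $\int_\Omega\nabla\phi\cdot\nabla w_k$.
\end{itemize}
The Schauder route is the one that actually works.
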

\begin{proof}
	Note that $\rL^1_{\sigma,n}(\Omega)$ is a proper closed subspace of $\rL^1(\Omega,\C^d)$ and $\rL^1(\Omega,\C^d)$ is a proper closed subspace of $M(\Omega,\C^d)$ which implies the first claim. For the two other results, we obtain using the Poincar\'{e}-Wirtinger inequality that 
	\begin{equation*}
		\left\| f - \fint_\Omega f \right\|_{\rL^1(\Omega)}
		\leqslant C_{\Omega} \cdot \| \nabla f \|_{M(\Omega)}
	\end{equation*}
	and conclude that the gradient $\nabla \colon \rW^{1,1}(\Omega) \subset \rL^1(\Omega) \to \rL^1(\Omega,\C^d)$ and $\nabla \colon \mathrm{BV}(\Omega) \subset M(\Omega) \to M(\Omega,\C^d)$ have closed ranges. 

    Now, let $u\in \rL^1_{\sigma,n}(\Omega) \cap \nabla \mathrm{BV}(\Omega)$. Then $u=\nabla p\in\rL^1_{\sigma,n}(\Omega)$, such that $\nabla p\cdot \nu =u\cdot\nu =0$ on $\partial\Omega$ and $\Delta p = \div u =0$. By Sobolev embeddings, the Neumann Laplace problem admits at most one solution (up to a constant), this implies $u=0$.
\end{proof}

Even if both summands are closed subspaces the direct sum is not closed in $\rL^1(\Omega,\C^d)$. As the next result shows this is exactly what makes the Helmholtz decomposition fail in $\rL^1(\Omega,\C^d)$.

\begin{lem}\label{lem:helmholtz l1 dense}
	Let $\Omega \subset \R^d$ be a bounded Lipschitz domain. Then the sum 
	\begin{equation*}
		\rL^1_{\sigma,n}(\Omega) + \nabla \rW^{1,1}(\Omega)
	\end{equation*}	
	is dense in $\rL^1(\Omega,\C^d)$.
\end{lem}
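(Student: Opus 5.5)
The natural plan is to prove density directly, using the Helmholtz decomposition in $\rL^p$ for some $p\in(1,\infty)$ together with the density of $\rL^p$ in $\rL^1$. A duality (Hahn--Banach) argument would also work, but the direct route avoids identifying the annihilator.

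Let $f\in\rL^1(\Omega,\C^d)$ and $\varepsilon>0$. Since $\Omega$ is bounded, $\rL^2(\Omega,\C^d)$ is dense in $\rL^1(\Omega,\C^d)$, so pick $g\in\rL^2(\Omega,\C^d)$ with $\|f-g\|_{\rL^1(\Omega)}<\varepsilon$. On a bounded Lipschitz domain the Helmholtz decomposition holds in $\rL^2$ for any Lipschitz domain. It follows from solving the weak Neumann problem
\begin{equation*}
	\int_\Omega \nabla q\cdot\overline{\nabla\varphi}=\int_\Omega g\cdot\overline{\nabla\varphi},\qquad \varphi\in\rW^{1,2}(\Omega),
\end{equation*}
by Lax--Milgram on $\rW^{1,2}(\Omega)/\C$, which uses Poincar\'e--Wirtinger. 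This gives $g=g_\sigma+\nabla q$ with $g_\sigma\in\rL^2_{\sigma,n}(\Omega)$ and $q\in\rW^{1,2}(\Omega)$.

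Since $\Omega$ is bounded, $\rL^2_{\sigma,n}(\Omega)\subset\rL^1_{\sigma,n}(\Omega)$ and $\rW^{1,2}(\Omega)\subset\rW^{1,1}(\Omega)$. Hence $g\in\rL^1_{\sigma,n}(\Omega)+\nabla\rW^{1,1}(\Omega)$, and this element lies within $\varepsilon$ of $f$, which proves density.

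The only point needing care is that the $\rL^2$-decomposition really lands in the $\rL^1$-spaces as defined. The divergence-free condition and the vanishing normal trace $g_\sigma\cdot\nu|_{\partial\Omega}=0$ must be understood in the weak sense, namely $\int_\Omega g_\sigma\cdot\nabla\varphi=0$ for all $\varphi\in\rW^{1,\infty}(\Omega)$. This condition is preserved under the inclusion $\rL^2\subset\rL^1$, so the step is essentially free. No $\rL^p$ theory beyond $p=2$ is needed.
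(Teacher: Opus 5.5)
Your proof is correct and follows essentially the same route as the paper: approximate $f$ in $\rL^1$ by $\rL^2$ fields, apply the $\rL^2$-Helmholtz decomposition, and use the inclusions $\rL^2_{\sigma,n}(\Omega)\subset\rL^1_{\sigma,n}(\Omega)$ and $\nabla\rW^{1,2}(\Omega)\subset\nabla\rW^{1,1}(\Omega)$ on the bounded domain. You add two details the paper leaves implicit: the Lax--Milgram construction of the $\rL^2$-decomposition, and the remark that the weak conditions $\div g_\sigma=0$, $g_\sigma\cdot\nu|_{\partial\Omega}=0$ survive the passage to $\rL^1$.
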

\begin{proof}
	Consider $u \in \rL^1(\Omega,\C^d)$. Since $\Omega \subset \R^d$ is bounded, there exists a sequence $(u_k)_{k \in \N} \subset \rL^2(\Omega,\C^d)$ which converges strongly to $u$ in $\rL^1(\Omega,\C^d)$. Furthermore, the Helmholtz decomposition in $\rL^2(\Omega,\C^d)$ implies the existence of $v_k \in \rL^2_{\sigma,n}(\Omega)$ and $p_k \in \rW^{1,2}(\Omega)$ such that
	\begin{equation*}
		u_k = v_k + \nabla p_k \in \rL^2_{\sigma,n}(\Omega) \oplus \nabla \rW^{1,2}(\Omega) \hookrightarrow \rL^1_{\sigma,n}(\Omega) + \nabla \rW^{1,1}(\Omega). \qedhere 
	\end{equation*}
\end{proof}

\begin{prop}Let $\Omega\subset\R^d$ be a bounded Lipschitz domain. Then the Helmholtz projection $\mathbb{P}$ on $\rC(\bar{\Omega},\C^d)$, with domain
\begin{align*}
    D_0(\mathbb{P})=\{ u\in\rC(\bar{\Omega},\C^d):\mathbb{P}u\in\rC(\bar{\Omega},\C^d)\}
\end{align*}
defines a closed projection operator such that
\begin{enumerate}[(i)]
    \item $\mathrm{rg}_0(\mathbb{P})=\rC_{\sigma,n}(\Omega)\subset D_0(\mathbb{P})$;
    \item $\ker_0(\mathbb{P})=\nabla \rC^1(\bar{\Omega})\subset D_0(\mathbb{P})$;
    \item $D_0(\mathbb{P}) = \ker_0(\mathbb{P})\oplus \mathrm{rg}_0(\mathbb{P})$;
    \item $\mathbb{P}^2=\mathbb{P}$ on $\mathrm{rg}_0(\mathbb{P})$.
\end{enumerate}
Furthermore, if $\Omega$ is a bounded $\rC^{1,\alpha}$-domain, $\alpha>0$, it is densely defined.
\end{prop}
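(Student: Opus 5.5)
The plan is to define $\mathbb{P}$ on $\rC(\bar\Omega,\C^d)$ as the restriction of the $\rL^p$-Leray projection for some fixed (indeed any) $p\in(1,\infty)$. Since $\Omega$ is bounded Lipschitz, the Helmholtz decomposition holds in $\rL^2$; this is all we need, because $\rC(\bar\Omega,\C^d)\subset \rL^2(\Omega,\C^d)$. We set $D_0(\mathbb{P})=\{u\in\rC(\bar\Omega,\C^d): \mathbb{P}_2 u\in \rC(\bar\Omega,\C^d)\}$ and $\mathbb{P}u:=\mathbb{P}_2u$.

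\emph{Closedness.} Take $u_k\to u$ and $\mathbb{P}u_k\to v$ uniformly. Uniform convergence on the bounded set $\Omega$ implies $\rL^2$-convergence. Since $\mathbb{P}_2$ is bounded on $\rL^2$, we get $\mathbb{P}_2u_k\to\mathbb{P}_2u$ in $\rL^2$, and hence $v=\mathbb{P}_2u$ a.e. As $v$ is continuous, $u\in D_0(\mathbb{P})$ and $\mathbb{P}u=v$.

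\emph{Range and kernel.} If $u\in\rC_{\sigma,n}(\Omega)$, meaning continuous, divergence free, with $u\cdot\nu=0$, then $u\in\rL^2_{\sigma,n}$. So $\mathbb{P}_2u=u$ is continuous, which gives $\rC_{\sigma,n}(\Omega)\subset D_0(\mathbb{P})$ and $\mathbb{P}u=u$. Conversely, any $\mathbb{P}u$ with $u\in D_0(\mathbb{P})$ is continuous and lies in $\rL^2_{\sigma,n}$, so it belongs to $\rC_{\sigma,n}(\Omega)$. Here $\rC_{\sigma,n}$ is read with distributional divergence and weak normal trace. This proves (i). For (ii), if $u=\nabla q$ with $q\in\rC^1(\bar\Omega)$, then $u\in\nabla\rW^{1,2}$, so $\mathbb{P}_2u=0$, which is continuous. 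Conversely, if $\mathbb{P}u=0$, then $u=\nabla q$ with $q\in\rW^{1,2}$ and $\nabla q=u$ continuous, so $q\in\rC^1(\bar\Omega)$. On a Lipschitz domain a function with continuous gradient up to the boundary is $\rC^1(\bar\Omega)$, since Lipschitz domains are locally quasiconvex.

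\emph{Direct sum and idempotence.} For (iii), every $u\in D_0(\mathbb{P})$ splits as $u=\mathbb{P}u+(u-\mathbb{P}u)$. Both parts are continuous: the first is in $\mathrm{rg}_0$, and the second is annihilated by $\mathbb{P}_2$, so it lies in $\ker_0$. The intersection of $\ker_0$ and $\mathrm{rg}_0$ is trivial by the $\rL^2$ direct sum. Statement (iv) is immediate from $\mathbb{P}_2^2=\mathbb{P}_2$.

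\emph{Density.} For a $\rC^{1,\alpha}$-domain I would show that $\rC^{\alpha}(\bar\Omega,\C^d)\subset D_0(\mathbb{P})$, which is dense in $\rC(\bar\Omega,\C^d)$. Given $u\in\rC^\alpha$, write $\mathbb{P}u=u-\nabla q$, where $q$ solves the weak Neumann problem $\Delta q=\div u$, $\partial_\nu q=u\cdot\nu$, in the form $\int\nabla q\cdot\nabla\phi=\int u\cdot\nabla\phi$. This is a divergence-form problem with $\rC^\alpha$ data on a $\rC^{1,\alpha}$-domain. Schauder-type $\rC^{1,\alpha}$-regularity for such problems (Giaquinta/Lieberman, or the regularity results in \cite{BG:25}) gives $\nabla q\in\rC^\alpha(\bar\Omega)$, and hence $\mathbb{P}u$ is continuous.

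The main obstacle I expect is this density step: it needs boundary Schauder estimates for the conormal problem with only $\rC^{1,\alpha}$ boundary. A fallback is to use smooth $u$ together with $\rW^{1,p}$-regularity of the Neumann problem for large $p>d$, followed by the Sobolev embedding $\rW^{1,p}\hookrightarrow\rC$. That route requires $\nabla q\in\rW^{1,p}$, which is again available in $\rC^{1,\alpha}$-domains.
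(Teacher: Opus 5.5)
Your proposal is correct and follows the paper's route. Closedness and (i)--(iv) come from the embedding $\rC(\bar\Omega,\C^d)\hookrightarrow\rL^2(\Omega,\C^d)$ and the $\rL^2$-Leray projection. Density comes from showing that H\"older fields lie in $D_0(\mathbb{P})$ via H\"older regularity of the Neumann problem; the paper takes $\rC^\beta$, $\beta<\alpha$, and phrases this as $(-\Delta_N)^{-1}\colon\rB^{\beta-1}_{\infty,\infty,0}(\Omega)\to\rB^{\beta+1}_{\infty,\infty}(\Omega)$, which is your conormal Schauder estimate in Besov language.

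The only inaccuracy is in your fallback, which you do not need. The claim $\nabla q\in\rW^{1,p}$ for large $p>d$ does not hold on general $\rC^{1,\alpha}$-domains: for $p>d/(1-\alpha)$ it would force $\nabla q\in\rC^{1-d/p}$ with $1-d/p>\alpha$, more than a merely $\rC^{1,\alpha}$ boundary permits. That route would have to use fractional regularity $\nabla q\in\rH^{s,p}$ with $d/p<s<\alpha$ instead.
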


\begin{proof} Well-definedness, closedness as well as validity of points (i), (ii), (iii) and (iv) are direct consequences from the fact that, since $\Omega$ is bounded, one has  the continuous embedding $\rC(\bar{\Omega},\C^d)\hookrightarrow \rL^2(\Omega,\C^d)$.

It remains to show that it is densely defined when $\Omega$ is a bounded $\rC^{1,\alpha}$-domain. It is sufficient to show that $\rC^{\beta}(\bar\Omega,\C^d)\subset D_0(\mathbb{P})$, $0<\beta<\alpha$. Since $\mathbb{P}u=u+\nabla(-\Delta_N)^{-1}\div u$, it suffices to check that $\nabla(-\Delta_N)^{-1}\div u\in\rC^{\beta}(\bar\Omega,\C^d)$ whenever $u\in\rC^{\beta}(\bar\Omega,\C^d)$. Here, $\Delta_N$ stands for the Laplace operator subject to Neumann boundary conditions. However,  $\div\,:\,\rC^{\beta}(\bar\Omega,\C^d)=\rB^{\beta}_{\infty,\infty}(\Omega,\C^d)\longrightarrow \rB^{\beta-1}_{\infty,\infty,0}(\Omega,\C)$ and, up to considering quotient by constant distributions, $(-\Delta_N)^{-1}\,:\,\rB^{\beta-1}_{\infty,\infty,0}(\Omega,\C)\longrightarrow\rB^{\beta+1}_{\infty,\infty}(\Omega,\C)$ are well-defined and bounded. The latter is true since $\Omega$ is bounded with $\rC^{1,\alpha}$-boundary. This ends the proof.
\end{proof}

By duality, we obtain the following straightforward consequence.

\begin{cor}\label{cor:Helmholtz Proj on Radon Meas./L1}Let $\Omega\subset\R^d$ be a bounded $\rC^{1,\alpha}$-domain, $\alpha>0$. Then the dual Helmholtz projection $\mathbb{P}'$ on $M(\bar{\Omega},\C^d)$, with dual domain
\begin{align*}
    D_{M}(\mathbb{P}')=\{ u\in M(\bar{\Omega},\C^d):\mathbb{P}'u\in M(\bar{\Omega},\C^d)\}
\end{align*}
defines a closed projection operator such that
\begin{enumerate}[(i)]
    \item $\mathrm{rg}_M(\mathbb{P}')=M_{\sigma,n}(\bar\Omega)\subset D_M(\mathbb{P}')$;
    \item $\ker_M(\mathbb{P}')=\nabla \mathrm{BV}({\Omega})\oplus\nu M(\partial\Omega)\subset D_M(\mathbb{P}')$;
    \item $D_M(\mathbb{P}') = \ker_M(\mathbb{P}')\oplus \mathrm{rg}_M(\mathbb{P}')$;
    \item $(\mathbb{P}')^2=\mathbb{P}'$ on $\mathrm{rg}_M(\mathbb{P}')$.
    \item $\mathbb{P}' = \mathbb{P}$ on $\rL^2(\Omega,\C^d)\subset M(\bar{\Omega},\C^d)$.
\end{enumerate}
Furthermore,  its natural part in $\rL^1(\Omega,\C^d)\subset M(\bar{\Omega},\C^d)$ yields a closed densely defined operator with the corresponding properties.
\end{cor}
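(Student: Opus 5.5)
The plan is to obtain $\mathbb{P}'$ as the Banach-space adjoint of the closed, densely defined operator $\mathbb{P}$ on $\rC(\bar\Omega,\C^d)$ from the preceding proposition. Since $D_0(\mathbb{P})$ is dense, whenever $\Omega$ is $\rC^{1,\alpha}$, the adjoint $\mathbb{P}'$ is a well-defined closed operator on $\rC(\bar\Omega,\C^d)'=M(\bar\Omega,\C^d)$, by the Riesz--Markov--Kakutani theorem. Its domain $D_M(\mathbb{P}')$ consists of those $\mu$ for which $\varphi\mapsto\langle\mu,\mathbb{P}\varphi\rangle$ extends boundedly to all of $\rC(\bar\Omega,\C^d)$. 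Closedness is then automatic. Item (v) follows from the symmetry of $\mathbb{P}$ on $\rL^2$: for $\mu=f\in\rL^2$ and $\varphi\in D_0(\mathbb{P})$ we have $\langle f,\mathbb{P}\varphi\rangle=\langle\mathbb{P}f,\varphi\rangle$. As $\mathbb{P}f\in\rL^2\subset M$, this gives $f\in D_M(\mathbb{P}')$ with $\mathbb{P}'f=\mathbb{P}f$.

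For the kernel (ii) I would use annihilators: $\ker\mathbb{P}'=\mathrm{rg}_0(\mathbb{P})^\perp=\rC_{\sigma,n}(\Omega)^\perp$. One inclusion is the integration-by-parts formula for the extended gradient range from \autoref{ssec:Radon Meas. and BV}. Both $\nabla u$ with $u\in\mathrm{BV}(\Omega)$ and $\nu g$ with $g\in M(\partial\Omega)$ annihilate divergence-free fields with vanishing normal trace. Here I would check this first on $\rC^1$ solenoidal fields and then pass to the limit using density in $\rC_{\sigma,n}$. The reverse inclusion is the main obstacle: a measure $\mu$ annihilating all of $\rC_{\sigma,n}(\Omega)$ must be shown to have the form $\nabla u+\nu g$. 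I would try a de Rham-type argument. Restricted to $\Omega$, $\mu$ annihilates $\rC^\infty_{\sigma,c}$, so $\mu|_\Omega=\nabla u$ for some distribution $u$, and $u\in\mathrm{BV}$ because $\mu|_\Omega\in M$ and the Poincaré inequality holds on the Lipschitz domain. It then remains to show that the boundary part of $\mu-\nabla u$ is normal. For this I would test against solenoidal fields whose trace is a prescribed tangential continuous field, which exist by a Bogovskii-type correction on $\rC^{1,\alpha}$ domains. That the sum is direct follows as in \autoref{lem:L1,gradients subspace M}.

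For (i), I would show that $\mathrm{rg}_M(\mathbb{P}')=\ker_0(\mathbb{P})^\perp=(\nabla\rC^1(\bar\Omega))^\perp$. These are exactly the measures that are weakly divergence-free with zero normal component, which I would take as the definition of $M_{\sigma,n}(\bar\Omega)$. Every such $\mu$ satisfies $\langle\mu,\mathbb{P}\varphi\rangle=\langle\mu,\varphi\rangle$ by (iii) of the proposition, so $\mu\in D_M(\mathbb{P}')$ and $\mathbb{P}'\mu=\mu$. This also yields (iv).

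For (iii), take $\mu\in D_M(\mathbb{P}')$ and write $\mu=\mathbb{P}'\mu+(\mu-\mathbb{P}'\mu)$. Here $\mathbb{P}'\mu$ annihilates $\nabla\rC^1$ and so lies in $\mathrm{rg}_M(\mathbb{P}')$, while $(\mathbb{P}')^2=\mathbb{P}'$ because $(\mathbb{P}^2)'=\mathbb{P}'$ holds on the relevant domains. Finally, the part of $\mathbb{P}'$ in $\rL^1(\Omega,\C^d)$ is closed, being the part of a closed operator in a closed subspace. It is densely defined because its domain contains $\rL^2(\Omega,\C^d)$ by (v), and $\rL^2$ is dense in $\rL^1$. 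The corresponding properties are then inherited by intersecting (i)--(iv) with $\rL^1$: the kernel becomes $\nabla\rW^{1,1}(\Omega)$, since the $\mathrm{BV}$ functions with $\rL^1$ gradient are exactly $\rW^{1,1}$ and an absolutely continuous measure carries no boundary part.
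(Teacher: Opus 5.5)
Your proposal is correct and is essentially the paper's argument. The paper only asserts that the corollary follows ``by duality'' from the preceding proposition, and you carry out exactly that duality: you take $\mathbb{P}'$ as the Banach-space adjoint of the closed, densely defined $\mathbb{P}$ on $\rC(\bar\Omega,\C^d)$, and you identify $\ker_M(\mathbb{P}')=\rC_{\sigma,n}(\Omega)^\perp$ and $\mathrm{rg}_M(\mathbb{P}')=(\nabla\rC^1(\bar\Omega))^\perp$.

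A few minor details need tidying:
\begin{enumerate}[(a)]
    \item The sum $\nabla\mathrm{BV}(\Omega)\oplus\nu M(\partial\Omega)$ is direct simply because the summands are concentrated on the disjoint sets $\Omega$ and $\partial\Omega$. It does not come from \autoref{lem:L1,gradients subspace M}.
    \item The inclusion $\nabla\mathrm{BV}(\Omega)\subset\rC_{\sigma,n}(\Omega)^\perp$ can be obtained by strictly approximating $u$ as in \cite{AG:78}, which avoids an unproved density claim in $\rC_{\sigma,n}(\Omega)$.
    \item In the boundary step you only need uniform density of traces of $\rC_{\sigma,n}(\Omega)$-fields among continuous tangential fields. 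An exact solenoidal extension of each such field is not required.
\end{enumerate}
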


The construction of the Leray projection as an unbounded operator on the spaces of continuous or integrable functions is the best type of result one can expect.

Although standard --the next result seems to be very well-known--, we provide a proof for the reader's convenience that the Helmholtz decomposition systematically fails on $\rL^1(\Omega)$, $\rC(\bar\Omega)$ and $M(\bar\Omega)$.

\begin{prop}\label{prop:L1 Helmholtz always fails} Let $\Omega$ be a bounded $\rC^{1,\alpha}$-domain, $\alpha>0$. It holds
\begin{align*}
    \rL^1(\Omega,\C^d) &\not =  \rL^1_{\sigma,n}(\Omega) \oplus \nabla \rW^{1,1}(\Omega),\\
    \rC(\bar\Omega,\C^d) &\not =  \rC_{\sigma,n}(\Omega) \oplus \nabla \rC^{1}(\bar\Omega),\\
    M(\bar\Omega,\C^d) &\not =  M_{\sigma,n}(\bar\Omega)\oplus [\nabla \mathrm{BV}(\Omega)\oplus \nu M(\partial\Omega)].
\end{align*}
\end{prop}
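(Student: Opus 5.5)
We argue by contradiction in each case, using the closed graph theorem to turn a putative algebraic decomposition into a bounded projection. Suppose the first identity held, i.e. $\rL^1(\Omega,\C^d)=\rL^1_{\sigma,n}(\Omega)\oplus\nabla\rW^{1,1}(\Omega)$ as an algebraic direct sum. By \autoref{lem:L1,gradients subspace M} both summands are closed subspaces of $\rL^1(\Omega,\C^d)$ with trivial intersection. A direct sum of two closed subspaces that exhausts a Banach space is topological, by the closed graph theorem. Hence the associated projection onto $\rL^1_{\sigma,n}(\Omega)$ along $\nabla\rW^{1,1}(\Omega)$ would be bounded on $\rL^1(\Omega,\C^d)$. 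It agrees with the Leray projection $\mathbb{P}$ on $\rL^2(\Omega,\C^d)$, by uniqueness of the $\rL^2$-decomposition. So $\mathbb{P}$ would extend boundedly to $\rL^1$, and $\nabla\rW^{1,1}(\Omega)$ would be complemented in $\rL^1(\Omega,\C^d)$. This contradicts Bourgain--Brezis \cite{BB:02}.

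If one prefers a self-contained contradiction, I would instead show directly that $\mathbb{P}$ is unbounded on $\rL^1$. The plan is to localize near an interior point and use approximate identities $u_\varepsilon=\rho_\varepsilon e_1$, which satisfy $\|u_\varepsilon\|_{\rL^1}=1$. The identity $\mathbb{P}u=u+\nabla(-\Delta_N)^{-1}\div u$ then produces the Calder\'on--Zygmund kernel $\partial_i\partial_j\Gamma$ applied to $\rho_\varepsilon$, plus a smooth boundary correction. The $\rL^1$ norm of this term blows up like $|\log\varepsilon|$, since the kernel is homogeneous of degree $-d$ and not integrable at infinity on annuli.

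For $\rC(\bar\Omega,\C^d)$, the same closed-graph argument applies. Here $\rC_{\sigma,n}(\Omega)$ is closed, and $\nabla\rC^1(\bar\Omega)$ is closed in $\rC(\bar\Omega,\C^d)$: indeed, if $\nabla p_k\to g$ uniformly with $p_k$ mean free, then $p_k$ is Cauchy in $\rC^1$ by Poincar\'e on the connected domain. The intersection is trivial by the Neumann uniqueness argument of \autoref{lem:L1,gradients subspace M}. A decomposition would therefore make $\mathbb{P}$ bounded on $\rC(\bar\Omega,\C^d)$, i.e. $D_0(\mathbb{P})$ would be the whole space. Taking adjoints, $\mathbb{P}'$ would be bounded on $M(\bar\Omega,\C^d)$. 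By Corollary \ref{cor:Helmholtz Proj on Radon Meas./L1}(v), its restriction to $\rL^1$ would map into $\rL^1$: the image of an $\rL^1$ function is $\mathbb{P}$ applied to an $\rL^2$-approximation, and the limits are absolutely continuous because $\rL^1$ is closed in $M$ and $\rL^2$ is dense in $\rL^1$. This would give a bounded Leray projection on $\rL^1$, contradicting the first case.

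Conversely, for $M(\bar\Omega,\C^d)$, a decomposition with closed summands would make $\mathbb{P}'$ bounded on $M(\bar\Omega,\C^d)$. Restricting to the closed subspace $\rL^1$, using (v) and density as above, again yields a bounded $\rL^1$ Leray projection, which is a contradiction. For the closedness of $\nabla\mathrm{BV}(\Omega)\oplus\nu M(\partial\Omega)$ and the trivial intersection I would invoke Corollary \ref{cor:Helmholtz Proj on Radon Meas./L1}(iii) together with \autoref{lem:L1,gradients subspace M}.

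The main obstacle is the step showing that the restricted projection really lands in $\rL^1$, i.e. that $\mathbb{P}'$ preserves absolute continuity. The density argument sketched above should handle it, but it is the point most in need of care. If it resists, I would fall back on the explicit $|\log\varepsilon|$ blow-up computation in each setting.
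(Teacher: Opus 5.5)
Your proposal is correct, and for $\rC(\bar\Omega,\C^d)$ and $M(\bar\Omega,\C^d)$ it follows the same reduction chain as the paper: a decomposition in $\rC$ gives a bounded $\mathbb{P}'$ on $M$, which restricts to a bounded Leray projection on $\rL^1$. You also make explicit two steps the paper leaves implicit. These are the closed graph upgrade to a topological decomposition, and the fact that $\mathbb{P}'$ maps $\rL^1$ into $\rL^1$; your density and closedness argument for the latter is sound.

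The real difference is the $\rL^1$ case. Your main line closes it by citing the non-complementedness of $\nabla\rW^{1,1}(\Omega)$ from \cite{BB:02}. That is shorter, but it outsources exactly the statement being proved. The paper itself attributes the $\rL^1$ failure to \cite{BB:02} in the introduction and gives a direct proof here for self-containedness. You would also need to check that the cited result really covers a general bounded $\rC^{1,\alpha}$-domain.

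The paper's proof is precisely your fallback. Take $u_\varepsilon=\rho_\varepsilon(\cdot-x_0)e_1$ with $\rho=\mathbf{1}_{\rB_1(0)}$. The first component of the whole-space gradient $\nabla q_\varepsilon$ is $R_1^2\rho_\varepsilon$. By the mean value property, for $|x|\geqslant 2\varepsilon$ this equals $c_d(dx_1^2-|x|^2)|x|^{-d-2}$, so its $\rL^1$-norm is $\gtrsim\ln(r/2\varepsilon)$. The harmonic Neumann correction has boundary data of size $O(r^{-d})$, so by Jerison--Kenig it is bounded in $\rH^{1,2}(\Omega)$ uniformly in $\varepsilon$.

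This computation also removes the obstacle you flagged. Since $u_\varepsilon\in\rL^\infty\subset\rL^2$, a bounded $\mathbb{P}'$ on $M(\bar\Omega,\C^d)$ together with $\mathbb{P}'=\mathbb{P}$ on $\rL^2$ (item (v) of \autoref{cor:Helmholtz Proj on Radon Meas./L1}) gives $\|\mathbb{P}u_\varepsilon\|_{\rL^1(\Omega)}=\|\mathbb{P}'u_\varepsilon\|_{M(\bar\Omega)}\leqslant C$ directly. The same holds for the bounded projection coming from a decomposition of $M(\bar\Omega,\C^d)$. Either bound contradicts the logarithmic blow-up, with no need to discuss whether $\mathbb{P}'$ preserves absolute continuity.
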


\begin{proof}\textbf{Step 1:} Assume by contradiction that
\begin{align*}
    \rL^1(\Omega,\C^d)  =  \rL^1_{\sigma,n}(\Omega) \oplus \nabla \rW^{1,1}(\Omega).
\end{align*}
Let $x_0\in\Omega$, $r>0$, such that $\overline{\rB_{2r}(x_0)}\subset\Omega$ and consider $\rho= \mathbf{1}_{\rB_1(0)}$.
For any $\varepsilon>0$, we set $u_\varepsilon= \varepsilon^{-d}\rho(\frac{x-x_0}{\varepsilon})e_1 = \rho_\varepsilon(x-x_0)e_1$. Hence, provided $\varepsilon<r$, $u_\varepsilon$ has support in $\overline{\rB_{\varepsilon}(x_0)}\subset{\rB_{r}(x_0)}$, for $\varepsilon<r$. By assumption, taking the divergence of the assumed direct topological decomposition, there exists a unique $p_\varepsilon\in\rW^{1,1}(\Omega)$, since it can be chosen to be mean-free, such that $-\Delta p_\varepsilon =\div u_\varepsilon = \partial_{x_1}\rho_\varepsilon(\cdot-x_0)$ in $\Omega$ and $\nabla  p_\varepsilon\cdot\nu = u_\varepsilon\cdot \nu =0$ on $\partial\Omega$.
We set
\begin{align*}
    q_\varepsilon:= (-\Delta_{\R^d})^{-1}\partial_{x_1}\rho_\varepsilon(\cdot-x_0)
\end{align*}
and  $h_\varepsilon = p_\varepsilon-q_\varepsilon$ is such that $\Delta h_\varepsilon =0$ in $\Omega$ and satisfies $\nabla h_\varepsilon\cdot \nu =- \nabla q_\varepsilon \cdot \nu$ on $\partial\Omega$. For $x\in\partial\Omega$,
\begin{align*}
    |\nabla q_\varepsilon(x)| \leqslant \int_{\rB_r(x_0)} \frac{|\rho_\varepsilon(\cdot-x_0)|}{|x-y|^d} \mathrm{d} y \leqslant \frac{C}{r^d}.
\end{align*}
Thus $\nabla q_\varepsilon \cdot \nu \in\rL^\infty(\partial\Omega)\subset\rL^2(\partial\Omega)$, so that by Theorem~2 from \cite{JK:81}, it holds that $h_\varepsilon\in \rH^{1,2}(\Omega)\subset \rW^{1,1}(\Omega)$ with the following bound uniform with respect to $\varepsilon$:
\begin{align*}
    \lVert \nabla h_\varepsilon\rVert_{\rL^1(\Omega)} \leqslant C^r_{\Omega}.
\end{align*}
On the other hand,
\begin{align*}
    \lVert \nabla q_\varepsilon\rVert_{\rL^1(\Omega)}\geqslant \lVert R^2_1\rho_\varepsilon(\cdot-x_0)\rVert_{\rL^1(\Omega)}.
\end{align*}
Without loss of generality, we can assume $x_0=0$. For $R_1:=\partial_{x_1}(-\Delta_{\R^d})^{-1/2}$ to be the $d$-dimensional Riesz Transform with respect to the first variable, one has for almost every $x\in\Omega$:
\begin{align*}
    \partial_{x_1}q_\varepsilon(x)=R^2_1\rho_\varepsilon(x) = \frac{c_d}{\varepsilon^d}\int_{\rB_{\varepsilon}(0)} \frac{d(x_1-y_1)^2-|x-y|^2}{|x-y|^{d+2}}\mathrm{d} y.
\end{align*}
Now, if $|x|\geqslant 2\varepsilon$, since the integrand is harmonic it holds
\begin{align*}
    R^2_1\rho_\varepsilon(x) = c_d\frac{dx_1^2-|x|^2}{|x|^{d+2}}.
\end{align*}
Therefore, by change of variables in polar coordinates, we can conclude with
\begin{align*}
    \lVert R^2_1\rho_\varepsilon\rVert_{\rL^1(\Omega)} \geqslant c_d \int_{2\varepsilon<|y|<r} \left|\frac{dy_1^2-|y|^2}{|y|^{d+2}}\right|\mathrm{d} y = c_d \int_{2\varepsilon}^r \frac{\tau^{d-1}\tau^2}{\tau^{d+2}} \mathrm{d}\tau = c_d \ln\Big(\frac{r}{2\varepsilon}\Big)\xrightarrow[\varepsilon\rightarrow0_+]{}+\infty.
\end{align*}
\smallskip
\

\noindent\textbf{Step 2:} We check the remaining cases. If one assumes that the Helmholtz decomposition holds for Radon measures, then by \autoref{cor:Helmholtz Proj on Radon Meas./L1}, this decomposition restricts and extends as the bounded and topological Helmholtz decomposition on $\rL^1(\Omega,\C^d)$, which is a contradiction.

Similarly, assume that the Helmholtz decomposition holds for the space of continuous functions $\rC(\bar\Omega,\C^d)$. By duality and  \autoref{cor:Helmholtz Proj on Radon Meas./L1}, this implies that the Helmholtz decomposition holds for the space of Radon measures $M(\bar\Omega,\C^d)$, which leads to another contradiction.
\end{proof}

\subsection{The Sun-dual space}
\label{ssec:sun dual space}

\ 

In order to determine the sun-dual space of the Stokes operator $A$ on $X = \rC_{\sigma,0}(\Omega)$ we first characterize the dual space of $X$.

\begin{lem}\label{lem:X'}
	Let $\Omega \subset \R^d$ be a bounded $\rC^{1,\alpha}$-domain, $\alpha>0$. Then
	\begin{equation*}
		\rC_{\sigma,0}(\Omega)' = \nicefrac{M(\Omega,\C^d)}{\nabla \mathrm{BV}(\Omega)} .
	\end{equation*}
	Moreover, we have
	\begin{enumerate}[(i)]
		\item the isometric embedding $\nicefrac{\rL^1(\Omega,\C^d)}{\nabla \rW^{1,1}(\Omega)} \hookrightarrow \nicefrac{M(\Omega,\C^d)}{\nabla \mathrm{BV}(\Omega)}$;
		\item the canonical continuous embedding $\rL^1_{\sigma,n}(\Omega) \hookrightarrow \nicefrac{\rL^1(\Omega,\C^d)}{\nabla \rW^{1,1}(\Omega)}$ with dense range.
	\end{enumerate}
\end{lem}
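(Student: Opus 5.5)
The plan is to derive $\rC_{\sigma,0}(\Omega)' \cong \nicefrac{M(\Omega,\C^d)}{\nabla\mathrm{BV}(\Omega)}$ from Hahn--Banach and the annihilator calculus. By construction $X=\rC_{\sigma,0}(\Omega)$ is a closed subspace of $\rC_0(\Omega,\C^d)$, and $\rC_0(\Omega,\C^d)'=M(\Omega,\C^d)$. Hence $X'\cong \nicefrac{M(\Omega,\C^d)}{X^\perp}$ isometrically, where $X^\perp$ is the annihilator of $X$ in $M(\Omega,\C^d)$. The whole statement therefore reduces to proving
\begin{equation*}
X^\perp=\nabla\mathrm{BV}(\Omega).
\end{equation*}

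The inclusion $\nabla\mathrm{BV}(\Omega)\subset X^\perp$ is the easy direction. For $u\in\mathrm{BV}(\Omega)$ and $\varphi\in\rC^\infty_{\sigma,c}(\Omega)$ we have $\langle\nabla u,\varphi\rangle=-\int_\Omega u\,\div\varphi=0$. By \autoref{lem:density} and the continuity of $\nabla u$ on $\rC_0$, this extends to all $\varphi\in X$.

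For the converse, take $\mu\in X^\perp$. Then $\mu$ annihilates $\rC^\infty_{\sigma,c}(\Omega)$, so de Rham's theorem gives $\mu=\nabla q$ for some distribution $q\in\mathcal D'(\Omega)$. It remains to show $q\in\rL^1(\Omega)$, since then $q\in\mathrm{BV}(\Omega)$. The first attempt is duality. The divergence $\div\colon\rW^{1,\infty}_0$-type fields $\to\rL^\infty_0(\Omega)$ (mean-free) has no bounded right inverse into $\rC_0$, but the Bogovski\u{\i} operator $B$ maps $\rC^\infty_{c}$ mean-free functions into $\rC^\infty_c$ fields and is bounded $\rL^p_0\to\rW^{1,p}_0$ for $p>d$, hence into $\rC_0$. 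So for mean-free $g\in\rL^p(\Omega)$, $p>d$, set $\langle q,g\rangle:=-\langle\mu,Bg\rangle$. This gives $|\langle q,g\rangle|\le C\|\mu\|_M\|g\|_{\rL^p}$, and therefore $q\in\rL^{p'}(\Omega)\subset\rL^1(\Omega)$ (after fixing constants). The identity $\nabla q=\mu$ follows because two Bogovski\u{\i} representatives differ by an element of $X$, on which $\mu$ vanishes. Then $\nabla q=\mu\in M$ with $q\in\rL^1$, so $q\in\mathrm{BV}(\Omega)$, and the closedness of $\nabla\mathrm{BV}(\Omega)$ from \autoref{lem:L1,gradients subspace M} is consistent with this.

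For (i), note that $\rL^1\cap\nabla\mathrm{BV}(\Omega)=\nabla\rW^{1,1}(\Omega)$ by definition of $\rW^{1,1}$, so the map is injective. For isometry we need $\operatorname{dist}_{M}(f,\nabla\mathrm{BV})=\operatorname{dist}_{\rL^1}(f,\nabla\rW^{1,1})$ for $f\in\rL^1$. Both distances equal $\sup\{|\int f\cdot\varphi|:\varphi\in X,\|\varphi\|_\infty\le1\}$. For the $M$-quotient this is the duality just shown. For the $\rL^1$-quotient it follows from Hahn--Banach, since $(\nabla\rW^{1,1})^\perp\subset\rL^\infty$ consists of the fields in $\rL^\infty_{\sigma,0}$-type weak sense. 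The main obstacle is this step. The annihilator lies in $\rL^\infty$ rather than $\rC_0$, so I would approximate by mollification adapted to divergence-free fields, or more simply use the resolvent/semigroup of \autoref{thm:stokes Linfty}: $\lambda(\lambda+A)^{-1}$ maps $\rL^\infty_{\sigma,n}$ into $X$ with norm $\le C$. This replaces the $\rL^\infty$ test field by an element of $X$ with a uniform constant, giving an equivalence (with constant one after a limiting/weak-$*$ argument). If only equivalence is obtained, that still yields a continuous embedding, which suffices later.

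For (ii), the map $\rL^1_{\sigma,n}\to\nicefrac{\rL^1}{\nabla\rW^{1,1}}$ is clearly contractive and is injective by \autoref{lem:L1,gradients subspace M}. Density of its range follows from \autoref{lem:helmholtz l1 dense}, since the image of $\rL^1_{\sigma,n}+\nabla\rW^{1,1}$ in the quotient equals the image of $\rL^1_{\sigma,n}$ and is dense.
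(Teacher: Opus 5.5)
\textbf{Where your proof is sound.} Your proof of $\rC_{\sigma,0}(\Omega)'=\nicefrac{M(\Omega,\C^d)}{\nabla\mathrm{BV}(\Omega)}$ and of (ii) is correct. It is the paper's Hahn--Banach/de Rham argument. Your Bogovski\u{\i} duality also makes rigorous the step the paper only asserts, namely $\nabla q\in M\Rightarrow q\in\rL^1$; it even makes de Rham superfluous, since the $q\in\rL^{p'}$ defined by duality already satisfies $\nabla q=\mu$.

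\textbf{The gap: the word \emph{isometric} in (i).} On the dual side you must show that $\sup|\int_\Omega f\cdot\varphi|$ over the unit ball of $(\nabla\rW^{1,1}(\Omega))^\perp$ equals the supremum over the unit ball of $\rC_{\sigma,0}(\Omega)$. That annihilator is $\rL^\infty_{\sigma,n}(\Omega)$, not a $\sigma,0$-type space: tangential traces are free. Your substitute $\psi_\lambda=\lambda(\lambda+A)^{-1}\varphi$ does lie in $\rC_{\sigma,0}(\Omega)$. It also converges to $\varphi$ weakly-$*$ against $\rL^1$; this can be justified, since both pairings kill $\nabla\rW^{1,1}$, $\rL^2_{\sigma,n}+\nabla\rW^{1,2}$ is dense, and $A$ is symmetric. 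But this only yields $|\int f\cdot\varphi|\le\limsup_\lambda\|\psi_\lambda\|_\infty\cdot\sup_{\psi\in\rC_{\sigma,0},\,\|\psi\|_\infty\le1}|\int f\cdot\psi|$. The resulting constant is $\sup_\lambda\|\lambda(\lambda+A)^{-1}\|_{\mathcal L(\rL^\infty_{\sigma,n})}$ from \autoref{thm:stokes Linfty}. No weak-$*$ limiting argument lowers it to $1$ unless you know $\limsup_{\lambda\to\infty}\|\lambda(\lambda+A)^{-1}\varphi\|_\infty\le\|\varphi\|_\infty$ for fields with nonzero tangential trace. For such fields a boundary layer forms and the Stokes resolvent has no maximum principle. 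So what you actually prove is an isomorphic embedding. That is weaker than (i) and than the isometric identifications of \autoref{cor:closures}.

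\textbf{The missing idea: work on the primal side, as the paper does.} Choose $q\in\mathrm{BV}(\Omega)$ with $\|f+\nabla q\|_{M(\Omega)}\le\|[f]\|_{\nicefrac{M}{\nabla\mathrm{BV}}}+\varepsilon$. Then replace $q$ by smooth $q_k\in\rW^{1,1}(\Omega)$ while increasing $\|f+\nabla q_k\|$ by at most $\varepsilon$. The paper cites Anzellotti--Giaquinta strict approximation. Be aware that strict convergence alone does not control $\|f+\nabla q_k\|_{\rL^1}$: on $(0,1)$, $q_k(x)=x+\frac{1-\cos(2\pi kx)}{2\pi k}\to x$ strictly, yet with $f=-1$ one has $\|f+q_k'\|_{\rL^1}=2/\pi$. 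Instead use the Meyers--Serrin construction $q_k=\sum_j\rho_{\varepsilon_j}*(\zeta_j q)$ itself. Set $f_k=\sum_j\rho_{\varepsilon_j}*(\zeta_j f)$ and $\nu:=f\,\mathrm{d}x+Dq$. Then
\[ f+\nabla q_k=\sum_j\rho_{\varepsilon_j}*(\zeta_j\nu)+(f-f_k)+\sum_j\big(\rho_{\varepsilon_j}*(q\nabla\zeta_j)-q\nabla\zeta_j\big). \]
The last two terms are small in $\rL^1$ for small $\varepsilon_j$. The first has $\rL^1$-norm at most $\sum_j\int\zeta_j\,\mathrm{d}|\nu|=\|f+\nabla q\|_{M(\Omega)}$, because mollification does not increase total variation. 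This gives $\|[f]\|_{\nicefrac{\rL^1}{\nabla\rW^{1,1}}}\le\|[f]\|_{\nicefrac{M}{\nabla\mathrm{BV}}}$ with constant one and uses no Stokes theory.
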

\begin{proof}
	Trivially, we have the embedding $ \nicefrac{M(\Omega,\C^d)}{\nabla \mathrm{BV}(\Omega)} \hookrightarrow \rC_{\sigma,0}(\Omega)'$. For the opposite embedding, we consider $l \in \rC_{\sigma,0}(\Omega)'$ and obtain by the Hahn-Banach theorem that there exists $\mu \in M(\Omega,\C^d) = \rC_0(\Omega,\C^d)'$ such that
	\begin{equation*}
		l(\varphi)
		= \int_{\Omega} \varphi \,  \mathrm{d} \mu 
		= \langle \mu, \varphi \rangle 
	\end{equation*}
	for all $\varphi \in \rC_{\sigma,0}(\Omega)$. Assume now that there exists another $\mu'$ such that 
	\begin{equation*}
		l = \langle \mu, \cdot \rangle = \langle \mu', \cdot \rangle \qquad \text{ on } \rC_{\sigma,0}(\Omega) .
	\end{equation*} 
	Thus $\langle \mu - \mu', \cdot \rangle = 0$ on $\rC_{\sigma,0}(\Omega)$ and the de Rham Theorem, see Theorem 17' page 95 of \cite{DeRham:84}, guarantees the existence of $\pi \in \mathcal{D}'(\Omega)$ such that $\mu - \mu' = \nabla \pi$ in $\mathcal{D}'(\Omega,\C^d)$. Hence, $\nabla \pi \in M(\Omega,\C^d)$ and therefore $\pi \in \mathrm{BV}(\Omega)$, since $\Omega$ is bounded, which proves the opposite direction.
	
	\smallskip 
	
	For the embedding in (i) we note that by the Radon-Nikodym theorem 
	\begin{equation*}
		\rL^1_{\sigma,n}(\Omega)
		\hookrightarrow
		\nicefrac{\rL^1(\Omega,\C^d)}{\nabla \rW^{1,1}(\Omega)}
		\hookrightarrow 
		\nicefrac{M(\Omega,\C^d)}{\nabla \mathrm{BV}(\Omega)} .
	\end{equation*}
	Now, for $[u] \in 
	\nicefrac{\rL^1(\Omega,\C^d)}{\nabla \rW^{1,1}(\Omega)}
	\hookrightarrow 
	\nicefrac{M(\Omega,\C^d)}{\nabla \mathrm{BV}(\Omega)}$ we obtain
	\begin{equation*}
		\| [u] \|_{\nicefrac{\rL^1(\Omega,\C^d)}{\nabla \rW^{1,1}(\Omega)}}
		= \inf_{\pi \in \rW^{1,1}(\Omega)} \| u + \nabla \pi \|_{\rL^1(\Omega)}
		= \inf_{\pi \in \rW^{1,1}(\Omega)} \| u + \nabla \pi \|_{M(\Omega)}
		= \| [u] \|_{\nicefrac{M(\Omega,\C^d)}{\nabla \rW^{1,1}(\Omega)}} .
	\end{equation*}
	Fix $\varepsilon > 0$ and consider $\pi_{\varepsilon} \in \mathrm{BV}(\Omega)$ such that
	\begin{equation*}
		\| u + \nabla \pi_\varepsilon \|_{M(\Omega)}
		\leqslant  \| [u] \|_{\nicefrac{M(\Omega,\C^d)}{\nabla \mathrm{BV}(\Omega)}} + \varepsilon .
	\end{equation*}
	By \cite[Theorem 1]{AG:78} there exists a sequence $(\pi_{k,\varepsilon})_{k \in \N} \subset \rC^\infty(\bar{\Omega}) \subset \rW^{1,1}(\Omega)$ such that
	\begin{enumerate}[(i)]
		\item $\pi_{k,\varepsilon} \to \pi_{\varepsilon}$ in $\rL^1(\Omega)$ as $k \to + \infty$;
		\item total mass converges, i.e. $\int_{\Omega} | \nabla \pi_{k,\varepsilon} | \to \int_{\Omega} | \nabla \pi_{\varepsilon} |$ as $k \to + \infty$. 
	\end{enumerate}
	Therefore, for sufficiently large $k$ we obtain
	\begin{equation*}
		\| [u] \|_{\nicefrac{M(\Omega,\C^d)}{\nabla \rW^{1,1}(\Omega)}}
		\leqslant \| u + \nabla \pi_\varepsilon \|_{M(\Omega)} + \varepsilon
		\leqslant \| [u] \|_{\nicefrac{M(\Omega,\C^d)}{\nabla \mathrm{BV}(\Omega)}} + 2 \varepsilon . 
	\end{equation*}
	Since $\varepsilon > 0$ is arbitrary, we conclude 
	\begin{equation*}
		\| [u] \|_{\nicefrac{\rL^1(\Omega,\C^d)}{\nabla \rW^{1,1}(\Omega)}}
		= \| [u] \|_{\nicefrac{M(\Omega,\C^d)}{\nabla \rW^{1,1}(\Omega)}}
		\leqslant \| [u] \|_{\nicefrac{M(\Omega,\C^d)}{\nabla \mathrm{BV}(\Omega)}} .
	\end{equation*}
	Combining both inequalities, we conclude for $u \in \rL^1(\Omega,\C^d)$ that
	\begin{equation*}
		\| [u] \|_{\nicefrac{\rL^1(\Omega,\C^d)}{\nabla \rW^{1,1}(\Omega)}}
		= \| [u] \|_{\nicefrac{M(\Omega,\C^d)}{\nabla \mathrm{BV}(\Omega)}} .
	\end{equation*}
	
	\smallskip 
	
	Lastly, the canonical embedding in (ii) is a direct consequence from \autoref{lem:helmholtz l1 dense} 
	\begin{equation*}
		\rL^1_{\sigma,n}(\Omega)
		\hookrightarrow
		\nicefrac{\big[\overline{\rL^1_{\sigma,n}(\Omega) \oplus \nabla \rW^{1,1}(\Omega)}^{\| \cdot \|_{\rL^1}}\big]}{\nabla \rW^{1,1}(\Omega)}
		= \nicefrac{\rL^1(\Omega,\C^d)}{\nabla \rW^{1,1}(\Omega)} . \qedhere 
	\end{equation*}
\end{proof}

We conclude with the following fundamental corollary.

\begin{cor}\label{cor:closures}
	Let $\Omega \subset \R^d$ be a bounded $\rC^{1,\alpha}$-domain, $\alpha>0$. Then we have the following isometric identifications 
	\begin{equation*}
		\nicefrac{\rL^1(\Omega,\C^d)}{\nabla \rW^{1,1}(\Omega)}
		= \overline{\rL^p_{\sigma,n}(\Omega)}^{\| \cdot \|_{\nicefrac{M(\Omega,\C^d)}{\nabla \mathrm{BV}(\Omega)}}}
		= \overline{\rC_{\sigma,0}(\Omega)}^{\| \cdot \|_{\nicefrac{M(\Omega,\C^d)}{\nabla \mathrm{BV}(\Omega)}}}
		= \overline{\rC_{\sigma,c}^\infty(\Omega)}^{\| \cdot \|_{\nicefrac{M(\Omega,\C^d)}{\nabla \mathrm{BV}(\Omega)}}}
	\end{equation*}
	for all $p \in [1,\infty]$. 
\end{cor}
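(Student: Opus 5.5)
The plan is to prove the chain of identifications by a sandwich argument inside the Banach space $\nicefrac{M(\Omega,\C^d)}{\nabla \mathrm{BV}(\Omega)}$, where by \autoref{lem:X'}(i) the space $\nicefrac{\rL^1(\Omega,\C^d)}{\nabla \rW^{1,1}(\Omega)}$ sits isometrically. Being complete, it is a closed subspace there. Every set on the right-hand side, namely $\rC^\infty_{\sigma,c}(\Omega)\subset \rC_{\sigma,0}(\Omega)\subset \rL^\infty_{\sigma,n}(\Omega)\subset\rL^p_{\sigma,n}(\Omega)\subset \rL^1_{\sigma,n}(\Omega)$, lies in $\rL^1(\Omega,\C^d)$. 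Hence every closure appearing in the statement is contained in $\nicefrac{\rL^1(\Omega,\C^d)}{\nabla \rW^{1,1}(\Omega)}$, and it suffices to show that the smallest set, $\rC^\infty_{\sigma,c}(\Omega)$, is dense in it for the quotient norm. The inclusions above show that the closures are monotone in the set, so all of them will then coincide.

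For density, I first use \autoref{lem:X'}(ii): $\rL^1_{\sigma,n}(\Omega)$ embeds continuously with dense range into $\nicefrac{\rL^1(\Omega,\C^d)}{\nabla \rW^{1,1}(\Omega)}$. This ultimately rests on \autoref{lem:helmholtz l1 dense}: given $[u]$, approximate $u$ in $\rL^1$ by $\rL^2$ fields, Helmholtz-decompose them in $\rL^2$, and drop the gradient parts, which vanish in the quotient. Next, by \autoref{lem:density}, $\rC^\infty_{\sigma,c}(\Omega)$ is dense in $\rL^1_{\sigma,n}(\Omega)$ for the $\rL^1$-norm. The quotient norm is dominated by the $\rL^1$-norm, since $\|[v]\|\leqslant \|v\|_{\rL^1}$ by taking $\pi=0$ in the infimum. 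So $\rL^1$-approximation by test fields gives approximation in the quotient norm. Composing the two density steps gives that $\rC^\infty_{\sigma,c}(\Omega)$ is dense in $\nicefrac{\rL^1(\Omega,\C^d)}{\nabla \rW^{1,1}(\Omega)}$. For $p=\infty$ the same holds, because $\rC^\infty_{\sigma,c}(\Omega)\subset\rL^\infty_{\sigma,n}(\Omega)$.

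The identifications are isometric because all spaces carry the restriction of the norm of $\nicefrac{M(\Omega,\C^d)}{\nabla \mathrm{BV}(\Omega)}$, and \autoref{lem:X'}(i) says this restriction agrees with the intrinsic quotient norm on $\nicefrac{\rL^1(\Omega,\C^d)}{\nabla \rW^{1,1}(\Omega)}$.

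The one point needing care is injectivity of the map from the solenoidal spaces into the quotient, so that the closures are genuinely closures of copies of these spaces. This follows from \autoref{lem:L1,gradients subspace M}, since $\rL^1_{\sigma,n}(\Omega)\cap\nabla\mathrm{BV}(\Omega)=\{0\}$. Apart from this I expect no real obstacle. The substantive input, the isometry of (i) via the strict approximation of \cite{AG:78}, is already established in \autoref{lem:X'}.
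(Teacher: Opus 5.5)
Your proposal is correct and is essentially the paper's argument. The paper gives no separate proof and presents the corollary as an immediate consequence of \autoref{lem:X'}(i)--(ii) together with the density of $\rC^\infty_{\sigma,c}(\Omega)$ in $\rL^1_{\sigma,n}(\Omega)$ from \autoref{lem:density}. Your sandwich argument makes that deduction explicit: the isometric copy of $\nicefrac{\rL^1(\Omega,\C^d)}{\nabla \rW^{1,1}(\Omega)}$ is closed, since $\nabla \rW^{1,1}(\Omega)$ is closed by \autoref{lem:L1,gradients subspace M}, and the closures are monotone in the set.
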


Now, we are sufficiently prepared to prove the main result of this subsection.
\begin{prop}\label{prop:sun dual stokes}
	Let $\Omega \subset \R^d$ be a bounded $\rC^{1,\alpha}$-domain.
	Consider 
	the Stokes operator $A$ on $X = \rC_{\sigma,0}(\Omega)$. Its sun dual space is given by
	\begin{equation*}
		X^{\odot} = \nicefrac{\rL^1(\Omega,\C^d)}{\nabla \rW^{1,1}(\Omega)} .
	\end{equation*} 
\end{prop}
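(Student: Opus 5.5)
By \autoref{lem:sun dual closure}, $X^{\odot}=\overline{D(A')}^{X'}$. By \autoref{lem:X'}, $X'\cong \nicefrac{M(\Omega,\C^d)}{\nabla \mathrm{BV}(\Omega)}$, and $\nicefrac{\rL^1(\Omega,\C^d)}{\nabla \rW^{1,1}(\Omega)}$ sits isometrically inside it. By \autoref{cor:closures}, this subspace is the closure of $\rC^\infty_{\sigma,c}(\Omega)$, and also of $\rL^p_{\sigma,n}(\Omega)$, in the quotient norm. It therefore suffices to prove two inclusions: $D(A')\subset \nicefrac{\rL^1(\Omega,\C^d)}{\nabla \rW^{1,1}(\Omega)}$, and some set dense in the latter space lies in $X^{\odot}$. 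The plan is to prove each inclusion and then take closures.

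For the first inclusion, use $D(A')=\mathrm{rg}\,(A')^{-1}=\mathrm{rg}\,(A^{-1})'$, since $A$ is invertible on $X$ by \autoref{thm:stokes Linfty}. Fix $\ell=[\mu]\in X'$ with $\mu\in M(\Omega,\C^d)$, and identify $(A^{-1})'\ell$. For $\varphi\in X$, the regularity in \autoref{thm:stokes Linfty} gives $A^{-1}\varphi=A_q^{-1}\varphi$ for every $q\in(1,\infty)$, so
\begin{align*}
  \langle (A^{-1})'\ell,\varphi\rangle=\langle \mu, A_q^{-1}\varphi\rangle .
\end{align*}
Choose $q>d$ close to $d$. \autoref{prop:stokes Lp} then gives $A_q^{-1}\colon \rL^q_{\sigma,n}(\Omega)\to \rH^{2,q}_{\sigma,D}(\Omega)$ when $\alpha$ permits; if not, use $\rH^{s,q}_{\sigma,0}(\Omega)$ with $s>d/q$, which suffices because only continuity into $\rC_0$ is needed. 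Since $\rH^{s,q}_{\sigma,0}\hookrightarrow \rC_{0}(\Omega,\C^d)$, the functional $\varphi\mapsto\langle\mu,A_q^{-1}\varphi\rangle$ extends boundedly to $\rL^q_{\sigma,n}(\Omega)$. It is therefore represented by some $g\in \rL^{q'}_{\sigma,n}(\Omega)$, with $q'<\frac{d}{d-1}$, through the duality $\rL^q_{\sigma,n}(\Omega)'\cong \nicefrac{\rL^{q'}}{\nabla\rW^{1,q'}}\cong\rL^{q'}_{\sigma,n}(\Omega)$. It follows that $(A^{-1})'\ell=[g]$ with $g\in\rL^{q'}\subset \rL^1$, which proves $D(A')\subset \iota(\rL^{q'}_{\sigma,n}(\Omega))\subset\nicefrac{\rL^1(\Omega,\C^d)}{\nabla \rW^{1,1}(\Omega)}$. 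Taking closures in $X'$ gives $X^{\odot}\subset \nicefrac{\rL^1(\Omega,\C^d)}{\nabla \rW^{1,1}(\Omega)}$.

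For the reverse inclusion, take $g\in \rL^2_{\sigma,n}(\Omega)$ and show $[g]\in D(A')$. By self-adjointness on $\rL^2$ together with consistency of the resolvents, $[g]=(A^{-1})'[h]$ with $h=A_2 g$ whenever $g\in D_2(A)$. Concretely, for $\varphi\in X$,
\begin{align*}
  \langle [A_2g],A^{-1}\varphi\rangle=\langle g,\varphi\rangle,
\end{align*}
so $D_2(A)\subset D(A')$ after identification. The space $D_2(A)$ is dense in $\rL^2_{\sigma,n}$, hence dense in the quotient norm, and by \autoref{cor:closures} its closure in $X'$ is all of $\nicefrac{\rL^1(\Omega,\C^d)}{\nabla \rW^{1,1}(\Omega)}$. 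Since $X^{\odot}$ is closed in $X'$, the reverse inclusion follows and the two sets are equal.

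The main obstacle is the first step: showing rigorously that every element of $D(A')$ has an $\rL^1$ representative, when a priori it is only a measure modulo $\nabla\mathrm{BV}$. This needs a mapping property $A^{-1}\colon \rL^q_{\sigma,n}\to \rC_0$ with some $q<\infty$, consistent with the $\rC_{\sigma,0}$-realization. Obtaining it for a merely $\rC^{1,\alpha}$ boundary, via \autoref{prop:stokes Lp} with $s\in(d/q,1+1/q)$, is where care is needed. Also required is the identification of the dual of $\rL^q_{\sigma,n}$ with $\rL^{q'}_{\sigma,n}$, where the Helmholtz decomposition for $q\in(1,\infty)$ is used.
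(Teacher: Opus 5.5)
Your proof is correct and follows essentially the same route as the paper: you write $X^{\odot}=\overline{D(A')}$ inside $\nicefrac{M(\Omega,\C^d)}{\nabla \mathrm{BV}(\Omega)}$, trap $D(A')$ between a set dense for the quotient norm (your $D_2(A)$, the paper's $\rC^\infty_{\sigma,c}(\Omega)$) and an $\rL^1$-type space obtained by dualizing the mapping of the $\rL^q$/$\rH^{\varepsilon,p}$ Stokes theory into $\rC_{\sigma,0}(\Omega)$, and conclude with \autoref{cor:closures}. The only difference is that you phrase the duality step via $D(A')=\mathrm{rg}\,(A^{-1})'$ and $A^{-1}\colon\rL^q_{\sigma,n}(\Omega)\to\rC_{\sigma,0}(\Omega)$, which is more explicit than the paper's dualization of $D_{\varepsilon,p}(A)\hookrightarrow D_0(A)\hookrightarrow D_p(A)$; this gives only $D(A')\subset\rL^{q'}$ rather than the paper's $D(A')\subset\rW^{1,p'}_{\sigma,0}(\Omega)$, but that stronger regularity is needed later in \autoref{lem:adjoint}, not for this proposition.
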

\begin{proof}
	Let $p > d$ such that $\nicefrac{d}{p} < \varepsilon < 1$ with $\varepsilon > 0$ and $p < \infty$ large. We have the continuous embeddings
	\begin{equation*}
		D_{\varepsilon,p}(A) \hookrightarrow D_0(A) \hookrightarrow D_p(A),
	\end{equation*}
	where $D_{\varepsilon,p}(A)$ is the domain of the Stokes operator on $\rH^{\varepsilon,p}_{\sigma,0}(\Omega)$ for $\frac{1}{p} < \frac{d}{p} < \varepsilon < 1 < 1 + \frac{1}{p}$, and $D_0(A)$ the domain of the Stokes operator on $\rC_{\sigma,0}(\Omega)$ from \autoref{thm:stokes Linfty}.
	Using \autoref{lem:X'} we obtain by duality that
	\begin{equation}
		D_{p'}(A') \hookrightarrow D_{\nicefrac{M(\Omega,\C^d)}{\nabla\mathrm{BV}(\Omega)}}(A') \hookrightarrow D_{-\varepsilon,p'}(A') .
		\label{eq:domain embeddings}
	\end{equation}
	By \autoref{prop:stokes Lp}, this implies
	\begin{equation*}
		\rC^\infty_{\sigma,c}(\Omega)
		\hookrightarrow 
		\rW^{2,p'}_{0,\sigma}(\Omega)
		\hookrightarrow D_{\nicefrac{M(\Omega,\C^d)}{\nabla\mathrm{BV}(\Omega)}}(A')	
		\hookrightarrow 
		\rW^{1,p'}_{\sigma,0}(\Omega)
		\hookrightarrow 
		\rL^1_{\sigma,n}(\Omega) .
	\end{equation*}
	Now, using \autoref{lem:sun dual closure} and \autoref{cor:closures} we conclude
	\begin{equation*}
		X^{\odot}
		= \overline{D_{\nicefrac{M(\Omega,\C^d)}{\nabla\mathrm{BV}(\Omega)}}(A')}^{\| \cdot \|_{\nicefrac{M(\Omega,\C^d)}{\nabla\mathrm{BV}(\Omega)}}}	
		= 
		\overline{\rL^1_{\sigma,n}(\Omega)}^{\| \cdot \|_{\nicefrac{M(\Omega,\C^d)}{\nabla\mathrm{BV}(\Omega)}}} = \nicefrac{\rL^1(\Omega,\C^d)}{\nabla \rW^{1,1}(\Omega)} . \qedhere 
	\end{equation*}
\end{proof}

\subsection{The Stokes operator on 
	$ \nicefrac{\rL^1(\Omega,\C^d)}{\nabla \rW^{1,1}(\Omega)}$}
\label{ssec:sun dual semigroup}

\

Using \autoref{lem:X'}, we calculate the adjoint of the Stokes operator $A \colon D_0(A) \subset \rC_{\sigma,0}(\Omega)\to\rC_{\sigma,0}(\Omega)$. 

\begin{lem}\label{lem:adjoint}
	Let $\Omega \subset \R^d$ be a bounded $\rC^{1,\alpha}$-domain, $\alpha>0$. Consider the Stokes operator $A \colon D(A) \subset \rC_{\sigma,0}(\Omega)\to\rC_{\sigma,0}(\Omega)$ from \autoref{thm:stokes Linfty}, then its adjoint $A' \colon D_{\nicefrac{M(\Omega)^d}{\nabla\mathrm{BV}(\Omega)}}(A') \subset X' \to X'$ on $X' = \nicefrac{M(\Omega,\C^d)}{\nabla \mathrm{BV}(\Omega)}$ is given by
	\begin{align*}
		A' [\mu] &= [-\Delta \mu+\nabla\pi] ,\quad \textrm{ with domain }\\
        D_{\nicefrac{M(\Omega)^d}{\nabla\mathrm{BV}(\Omega)}}(A') &= \left\{ [\mu] \in  \nicefrac{M(\Omega,\C^d)}{\nabla \mathrm{BV}(\Omega)}\ \colon\ \mu\in \rW^{1,1}_{\sigma,0}(\Omega)\  \textrm{s.t.}\  \exists \pi\in \rL^1(\Omega), \textrm{ s.t. } 
		  \Delta {\mu}-\nabla \pi \in  {M(\Omega,\C^d)} \right\} .
	\end{align*}
\end{lem}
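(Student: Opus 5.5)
We prove the two inclusions between the adjoint $A'$ and the operator $B$ defined in the statement, $B[\mu]=[-\Delta\mu+\nabla\pi]$ on the displayed domain, using the duality $X'=\nicefrac{M(\Omega,\C^d)}{\nabla\mathrm{BV}(\Omega)}$ from \autoref{lem:X'}. The main tool is the invertibility of $A$ on $X=\rC_{\sigma,0}(\Omega)$: since $0\in\rho(A)$ by \autoref{thm:stokes Linfty}, also $0\in\rho(A')$ and $(A')^{-1}=(A^{-1})'$. Thus it suffices to identify $(A^{-1})'$ and to show that $B$ is a well-defined injective extension-compatible operator.

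\textbf{Step 1 (well-definedness of $B$ and $B\subset A'$).} First we check that $B$ does not depend on representatives. If $\mu_0,\mu_1\in\rW^{1,1}_{\sigma,0}(\Omega)$ both represent $[\mu]$, then $\mu_0-\mu_1\in\nabla\mathrm{BV}(\Omega)\cap\rL^1_{\sigma,n}(\Omega)=\{0\}$ by \autoref{lem:L1,gradients subspace M}. Hence the class $[-\Delta\mu+\nabla\pi]$ modulo $\nabla\mathrm{BV}(\Omega)$ is independent of the choice of $\pi$.

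Next, for $[\mu]$ in the domain and $\varphi\in D_0(A)$, we need
\[
\langle-\Delta\mu+\nabla\pi,\varphi\rangle=\langle\mu,A\varphi\rangle .
\]
Write $A\varphi=-\Delta\varphi+\nabla q$ with $(\varphi,q)\in\rC^{1,\alpha}\times\rC^\alpha$. For $\mu\in\rC^\infty_{\sigma,c}(\Omega)$ and $\varphi\in\rC^\infty_{\sigma,c}(\Omega)$ this is plain integration by parts, using $\div\mu=\div\varphi=0$ and the vanishing traces to kill the pressures. To pass to general $\mu$, I would first apply the identity to $\varphi\in D_{\varepsilon,p}(A)$, which is dense in $D_0(A)$ in the graph norm, and then approximate. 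One convenient route is to observe that $\nabla\mu$ has a trace-sense pairing against $\varphi\in\rC^{1}$ with $\varphi|_{\partial\Omega}=0$. The boundary terms then vanish because $\mu|_{\partial\Omega}=0$ and $\varphi|_{\partial\Omega}=0$, and the pressure terms vanish because $\div\varphi=0$ and $\div\mu=0$. This gives $[\mu]\in D(A')$ with $A'[\mu]=B[\mu]$.

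\textbf{Step 2 ($A'\subset B$).} Let $[\mu]\in D(A')$ and set $[f]:=A'[\mu]\in X'$, with a representative $f\in M(\Omega,\C^d)$. Then $[\mu]=(A^{-1})'[f]$, so it suffices to show that $(A^{-1})'$ maps $X'$ into the domain of $B$ and inverts $B$. By \autoref{prop:sun dual stokes}'s argument, namely the duality embeddings \eqref{eq:domain embeddings} and \autoref{prop:stokes Lp}, $(A^{-1})'[f]$ lies in $D_{-\varepsilon,p'}(A')$, which is realized by $\mu\in\rW^{1,p'}_{\sigma,0}(\Omega)\subset\rW^{1,1}_{\sigma,0}(\Omega)$. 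Concretely, $\mu$ is the very weak Stokes solution with measure data: $M(\Omega)\hookrightarrow\rW^{-1+\varepsilon,p'}$ for $p'<d/(d-1+\varepsilon)$, and the $\rH^{s,p'}$-isomorphisms of \autoref{prop:stokes Lp} with $s=-1+\varepsilon$ give $\mu\in\rH^{1+\varepsilon,p'}_{\sigma,0}$ together with a pressure $\pi\in\rH^{\varepsilon,p'}\subset\rL^1$ satisfying $-\Delta\mu+\nabla\pi=f$ in $\mathcal{D}'$. Hence $\Delta\mu-\nabla\pi\in M$, so $[\mu]$ lies in the domain of $B$ and $B[\mu]=[f]=A'[\mu]$.

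\textbf{Main obstacle.} The delicate point is the rigorous identification of $(A^{-1})'[f]$ with this very weak solution, including the existence of the pressure. Testing against $\rC^\infty_{\sigma,c}(\Omega)$ yields $-\Delta\mu-f=-\nabla\pi$ only up to a distribution $\pi$, by de Rham as in \autoref{lem:X'}. The required regularity $\pi\in\rL^1$ must then come from $\nabla\pi\in\rW^{-1+\varepsilon,p'}$ together with Ne\v{c}as' inequality on the $\rC^{1,\alpha}$ domain. The same compatibility must hold uniformly with respect to the chosen representative $f$, which follows because changing $f$ by $\nabla g$ with $g\in\mathrm{BV}$ only shifts $\pi$ by $g$.
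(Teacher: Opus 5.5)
Your proposal is correct and follows essentially the same route as the paper. The domain is identified through the duality embeddings \eqref{eq:domain embeddings} and the negative-order isomorphisms of \autoref{prop:stokes Lp}, which you make explicit via $(A')^{-1}=(A^{-1})'$. The action $A'[\mu]=[-\Delta\mu+\nabla\pi]$ then follows from Green's formula, with the pressure terms vanishing because $\div\mu=\div\varphi=0$ and the boundary terms vanishing because $\mu$ and $\varphi$ both vanish on $\partial\Omega$; you also fill in points the paper leaves implicit, namely uniqueness of the $\rW^{1,1}_{\sigma,0}$-representative via \autoref{lem:L1,gradients subspace M} and recovery of $\pi\in\rL^{p'}(\Omega)\subset\rL^1(\Omega)$ by de Rham and Ne\v{c}as.
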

\begin{proof}
	The domain follows from \eqref{eq:domain embeddings} and \autoref{prop:stokes Lp}. 
	Now, consider $[\mu] \in D_{\nicefrac{M(\Omega)^d}{\nabla\mathrm{BV}(\Omega)}}(A')$ and $u \in D_0(A)$. Note that this in particular implies $\div u = 0$ weakly, $u|_{\partial \Omega} = 0$ and $\div (A u) = 0$, $(Au)|_{\partial \Omega} = 0$ weakly. Hence, we obtain for $\mu_1,\mu_2 \in [\mu]$, we have $\mu_1 = \mu_2 + \nabla p\in\rW^{1,1}_0(\Omega,\C^d)$ with $p \in \mathrm{BV}(\Omega)$. This implies
	\begin{equation*}
		\langle A u , \mu_1 \rangle 
		= \langle A u , \mu_2 \rangle + \langle A u , \nabla p \rangle
		= \langle A u , \mu_2 \rangle - \langle \div( A u) , p \rangle + \int_{\partial\Omega} (Au)\cdot\nu\, p \,  \mathrm{d} \sigma = \langle A u , \mu_2 \rangle .
	\end{equation*}
	and therefore $\langle A u , [\mu] \rangle$ is well-defined. By the definition of the adjoint, we see
	\begin{equation*}
		\langle u, A' [\mu] \rangle 
		= \langle A u, [\mu] \rangle .
	\end{equation*}
	By Sobolev embedding, we obtain $[\mu] \in D_{\nicefrac{M(\Omega,\C^d)}{\nabla\mathrm{BV}(\Omega)}}(A') \hookrightarrow \rW^{1,1}_{\sigma,0}(\Omega)$, in particular $\div \mu = 0$ and $\mu |_{\partial \Omega} = 0$. Now, we obtain
	\begin{equation*}
		\langle \nabla \pi, \mu \rangle
		= \int_{\Omega} \nabla \pi \mu \,  \mathrm{d} x 
		= - \int_{\Omega} \pi \div \mu \,  \mathrm{d} x 
		= 0 .
	\end{equation*}
	Finally, Green's formula implies
	\begin{equation*}
		\langle A u, \mu \rangle 
		= \langle -\Delta u + \nabla \pi_u, \mu \rangle 
		= \langle- \Delta u, \mu \rangle
		= \langle u, -\Delta \mu +\nabla \pi\rangle .  \qedhere 
	\end{equation*}
\end{proof}

\begin{rem}
	Note that, by \autoref{prop:stokes Lp}, we have the canonical Sobolev embeddings
	\begin{equation*}
		D_{\nicefrac{M(\Omega,\C^d)}{\nabla\mathrm{BV}(\Omega)}}(A') \hookrightarrow  
		\bigcap_{\frac{d}{p}<\varepsilon<1} \rW^{2-\varepsilon,p'}_{\sigma,0}(\Omega)
		\hookrightarrow
		\bigcap_{\frac{d}{p}<\varepsilon<1} \rW^{2-\varepsilon,1}_{\sigma,0}(\Omega)
		\cap \bigcap_{r < \frac{d}{d-1}} \rW^{1,r}_{\sigma,0}(\Omega)
		\hookrightarrow \rW^{1,1}_{\sigma,0}(\Omega) .
	\end{equation*}
\end{rem}

Now, we are sufficiently prepared to prove our main result. 

\begin{thm}\label{thm:sun-stokes}
	Let $\Omega \subset \R^d$ be a bounded $\rC^{1,\alpha}$-domain, $\alpha>0$.
	The Stokes operator $\tilde{A} \colon D_{1}(\tilde{A}) \subset \nicefrac{\rL^1(\Omega,\C^d)}{\nabla \rW^{1,1}(\Omega)} \to \nicefrac{\rL^1(\Omega,\C^d)}{\nabla \rW^{1,1}(\Omega)}$ defined by
	\begin{align*}
		\tilde{A} [u] &= [-\Delta u+\nabla \pi], 
		\quad \text{ with domain }\\
        \quad D_{1}(\tilde{A})
		&= \left\{ [u],\, u \in  
		\bigcap_{\frac{d}{p}<\varepsilon<1} \rW^{2-\varepsilon,p'}_{\sigma,0}(\Omega)  \colon \exists \pi\in\rL^1(\Omega)\,\textrm{s.t.}\, [\Delta u-\nabla \pi] \in  \nicefrac{\rL^1(\Omega,\C^d)}{\nabla \rW^{1,1}(\Omega)} \right\} ,
	\end{align*}
	is the (negative) generator of a bounded, compact and analytic $\rC_0$-semigroup $(\tilde{T}(t))_{t\geqslant 0}$ of angle $\frac{\pi}{2}$ on $\nicefrac{\rL^1(\Omega,\C^d)}{\nabla \rW^{1,1}(\Omega)}$. Moreover, we have the continuous embedding $\iota \colon \rL^1_{\sigma,n}(\Omega) \hookrightarrow \nicefrac{\rL^1(\Omega,\C^d)}{\nabla \rW^{1,1}(\Omega)}$ with dense range and the semigroup $(\tilde{T}(t))_{t \geqslant 0}$ leaves $\iota (\rL^1_{\sigma,n}(\Omega))$ invariant.
	
\end{thm}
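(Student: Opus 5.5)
The plan is to identify $\tilde{A}$ with the sun-dual operator $A^{\odot}$ of the Stokes operator $A$ on $X=\rC_{\sigma,0}(\Omega)$. All semigroup properties then follow from \autoref{prop:sun dual}.

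\textbf{Step 1: the sun-dual generator.} By \autoref{thm:stokes Linfty}, $-A$ generates a bounded, compact, analytic $\rC_0$-semigroup of angle $\frac{\pi}{2}$ on $X$. \autoref{prop:sun dual stokes} gives $X^{\odot}=\nicefrac{\rL^1(\Omega,\C^d)}{\nabla \rW^{1,1}(\Omega)}$. By \autoref{lem:X'}(i) this space sits isometrically inside $X'=\nicefrac{M(\Omega,\C^d)}{\nabla\mathrm{BV}(\Omega)}$, so the norm of $X^{\odot}$ as a subspace of $X'$ is the quotient $\rL^1$-norm. \autoref{prop:sun dual} then shows that $-A^{\odot}$ generates a bounded, compact, analytic $\rC_0$-semigroup of angle $\frac{\pi}{2}$ on $X^{\odot}$. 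Here $A^{\odot}$ is the part of $A'$ in $X^{\odot}$, that is,
\begin{equation*}
D(A^{\odot})=\{[\mu]\in D(A')\colon A'[\mu]\in X^{\odot}\}.
\end{equation*}

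\textbf{Step 2: $A^{\odot}=\tilde{A}$.} I would use \autoref{lem:adjoint} and the subsequent remark. There, $[\mu]\in D(A')$ means that $\mu$ has a representative in $\bigcap_{d/p<\varepsilon<1}\rW^{2-\varepsilon,p'}_{\sigma,0}(\Omega)\subset\rW^{1,1}_{\sigma,0}(\Omega)$, together with a pressure $\pi\in\rL^1(\Omega)$ such that $\Delta\mu-\nabla\pi\in M(\Omega,\C^d)$. In that case $A'[\mu]=[-\Delta\mu+\nabla\pi]$. Requiring in addition $A'[\mu]\in X^{\odot}$ means that the class $[\Delta\mu-\nabla\pi]\in\nicefrac{M}{\nabla\mathrm{BV}}$ lies in the image of $\nicefrac{\rL^1}{\nabla\rW^{1,1}}$, which is exactly the condition in $D_1(\tilde{A})$. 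The action of the two operators agrees as well.

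One subtlety must be checked. An element of $\nicefrac{M}{\nabla\mathrm{BV}}$ that has an $\rL^1$ representative has, by definition, a representative $f$ with $f-(\Delta\mu-\nabla\pi)\in\nabla\mathrm{BV}$. I would absorb that gradient into the pressure: replace $\pi$ by $\pi+q$ with $q\in\mathrm{BV}\subset\rL^1$. This preserves the form of the domain in the theorem, where $\pi$ is only required to lie in $\rL^1$. With this, $D(A^{\odot})=D_1(\tilde{A})$ and $\tilde{A}=A^{\odot}$, which gives the generation statement.

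\textbf{Step 3: the embedding and invariance.} The embedding $\iota$ with dense range is \autoref{lem:X'}(ii). For invariance, I would first show that $\tilde{T}(t)\iota u=\iota\,\mathrm{e}^{-tA_1}u$ for $u\in\rL^1_{\sigma,n}(\Omega)$, where $\mathrm{e}^{-tA_1}$ is the operator from \autoref{lem:almost L1}. Both sides are bounded from $\rL^1_{\sigma,n}(\Omega)$ into $X^{\odot}$, and $\rL^2_{\sigma,n}(\Omega)$ is dense in $\rL^1_{\sigma,n}(\Omega)$, so it suffices to check the identity for $u\in\rL^2_{\sigma,n}(\Omega)$, or even $u\in\rC^\infty_{\sigma,c}(\Omega)$. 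For such $u$ and $\varphi\in X$, the pairing $\langle \tilde{T}(t)[u],\varphi\rangle=\langle u,\mathrm{e}^{-tA}\varphi\rangle$ equals $\langle \mathrm{e}^{-tA_2}u,\varphi\rangle$. This uses the symmetry of the Stokes semigroup on $\rL^2_{\sigma,n}(\Omega)$ and the consistency of the semigroups on $\rC_{\sigma,0}(\Omega)\subset\rL^2_{\sigma,n}(\Omega)$. Since $\mathrm{e}^{-tA_1}$ maps $\rL^1_{\sigma,n}(\Omega)$ into itself, invariance of $\iota(\rL^1_{\sigma,n}(\Omega))$ follows.

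\textbf{Main obstacle.} I expect the hardest part to be the precise domain identification in Step 2. The pressure is determined only modulo $\mathrm{BV}$ versus $\rW^{1,1}$ representatives, and I must make sure that the sun-dual condition "$A'[\mu]\in X^{\odot}$" translates exactly into the stated $\rL^1$-quotient condition. The isometry of \autoref{lem:X'}(i) should settle this.
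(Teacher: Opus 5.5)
Your proposal is correct and follows the paper's proof. Like the paper, you identify $\tilde{A}$ with the sun-dual operator $A^{\odot}$ of the Stokes operator on $\rC_{\sigma,0}(\Omega)$, using the characterization $X^{\odot}=\nicefrac{\rL^1(\Omega,\C^d)}{\nabla \rW^{1,1}(\Omega)}$ and the description of $A'$, and then read off generation, boundedness, compactness and analyticity of angle $\frac{\pi}{2}$ from the abstract sun-dual theory. Your absorption of the $\nabla\mathrm{BV}$ ambiguity into the pressure, and your duality and consistency argument for $\tilde{T}(t)\iota=\iota\,\mathrm{e}^{-tA_1}$, are correct and only spell out what the paper leaves implicit when it states that the domains coincide and that invariance follows from the $\rL^1$ smoothing lemma.
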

\begin{proof}
	From \autoref{prop:sun dual}, \autoref{thm:stokes Linfty} and \autoref{prop:sun dual stokes} we conclude that the sun-dual operator $A^{\odot} \colon D(A^{\odot}) \subset  \nicefrac{\rL^1(\Omega,\C^d)}{\nabla \rW^{1,1}(\Omega)} \to  \nicefrac{\rL^1(\Omega,\C^d)}{\nabla \rW^{1,1}(\Omega)}$ is the negative generator of a bounded, compact, analytic $\rC_0$-semigroup $(T^{\odot}(t))_{t\geqslant 0}$ of angle $\frac{\pi}{2}$.
	Since $A^{\odot}$ is the part of adjoint $A'$ in $\nicefrac{\rL^1(\Omega,\C^d)}{\nabla \rW^{1,1}(\Omega)}$, we have by \autoref{lem:adjoint} that
	\begin{equation*}
		A^{\odot} [u] = A' [u] = [-\Delta u+\nabla \pi] = \tilde{A} [u] 
	\end{equation*}
	with the domain
	\begin{equation*}
		D_{\nicefrac{M(\Omega)^d}{\nabla\mathrm{BV}(\Omega)}}(A^{\odot})
		= \left\{ [u] \in D_{\nicefrac{M(\Omega,\C^d)}{\nabla\mathrm{BV}(\Omega)}}(A') \colon\exists \pi\in\rL^1(\Omega)\,\textrm{s.t.}\,  [\Delta u-\nabla \pi] \in \nicefrac{\rL^1(\Omega,\C^d)}{\nabla \rW^{1,1}(\Omega)} \right\} = D_{1}(\tilde{A}) .
	\end{equation*}
	Hence, $\tilde{A}$ is the negative generator of a bounded, compact, analytic $\rC_0$-semigroup $(\tilde{T}(t))_{t\geqslant 0}$ of angle $\frac{\pi}{2}$.
	The embedding has been proven in \autoref{lem:X'}. Finally, \autoref{lem:almost L1} implies that $(\tilde{T}(t))_{t \geqslant 0}$ leaves $\iota (\rL^1_{\sigma,n}(\Omega))$ invariant.
\end{proof}

It is worth pointing out that the Stokes semigroup on $\nicefrac{\rL^1(\Omega,\C^d)}{\nabla \rW^{1,1}(\Omega)}$ is an extension of the Stokes semigroups on $\rL^p_{\sigma,n}(\Omega)$. More precisely, the following holds.

\begin{rem}
	For all $p \in (1,\infty)$, we have the canonical identifications embedding $ \rL^p_{\sigma,n}(\Omega) \cong \nicefrac{\rL^p(\Omega,\C^d)}{\nabla \rW^{1,p}(\Omega)}$ and, since $\Omega$ is bounded, a canonical embedding $\nicefrac{\rL^p(\Omega,\C^d)}{\nabla \rW^{1,p}(\Omega)}\hookrightarrow \nicefrac{\rL^1(\Omega,\C^d)}{\nabla \rW^{1,1}(\Omega)} $. We denote their composition by $\iota_p\,:\, \rL^p_{\sigma,n}(\Omega)\hookrightarrow\nicefrac{\rL^1(\Omega,\C^d)}{\nabla \rW^{1,1}(\Omega)}$ and the map is injective and has an unbounded closed and densely defined inverse. In this case, the semigroup $(\tilde{T}(t))_{t \geqslant 0}$ leaves $\iota_p (\rL^p_{\sigma,n}(\Omega))$ invariant. Since the restricted semigroup $(T(t))_{t \geqslant 0}$ with $\iota_p T(t) = \widetilde{T}(t) \iota_p$ is the classical Stokes semigroup on $\rL^p_{\sigma,n}(\Omega)$ and hence a $\rC_0$-semigroup, we obtain by \cite[Section II.2.3]{EN:00} that its generator $A$ is the part of $\tilde{A}$ in $\iota_p (\rL^p_{\sigma,n}(\Omega))$. Hence, as discussed in the introduction, using the isomorphism induced by the Helmholtz decomposition, we see that $A$ is the Stokes operator on $\rL^p_{\sigma,n}(\Omega)$.   
\end{rem}

\subsection{The Stokes operator on $\rL^1_{\sigma,n}(\Omega)$}
\label{ssec:L1-stokes}

\

Finally, we prove the non-generation result for the Stokes operator on $\rL^1_{\sigma,n}(\Omega)$, which shows that \autoref{thm:sun-stokes} is the best result one can expect.

\begin{thm}\label{cor:L_1-stokes}
	Let $\Omega \subset \R^d$ be a bounded $\rC^{1,\alpha}$-domain, $\alpha>0$.
	Consider the Stokes operator $$A \colon D_1(A) \subset \rL^1_{\sigma,n}(\Omega) \to \rL^1_{\sigma,n}(\Omega)$$  as defined in \autoref{cor:Stokes Operator on L1}.
	Then $-A$ does not generate a $\rC_0$-semigroup on  $\rL^1_{\sigma,n}(\Omega)$. 
\end{thm}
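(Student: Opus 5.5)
We argue by contradiction. Suppose $-A$ generates a $\rC_0$-semigroup $(S(t))_{t\geqslant 0}$ on $\rL^1_{\sigma,n}(\Omega)$. By \autoref{lem:almost L1}, the operators $\mathrm{e}^{-zA}$ are already well defined on $\rL^1_{\sigma,n}(\Omega)$ and agree with the $\rL^2$-Stokes semigroup on the dense subspace $\rL^2_{\sigma,n}(\Omega)$. Since a generator determines its semigroup, $S(t)$ must coincide with this semigroup. The introduction says the non-generation comes from scaling plus \cite[Theorem 5.1]{DHP:01}. So the plan is to extract from the assumed semigroup a uniform-in-scale $\rL^1$ estimate near a boundary point, to blow it up to the half-space $\R^d_+$, and to get a contradiction with the half-space non-generation result.

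\textbf{Step 1 (uniform bounds from generation).} Generation gives $\|S(t)\|_{\mathcal{L}(\rL^1)}\leqslant M\mathrm{e}^{\omega t}$. Hence $\|R(\lambda,-A)\|\leqslant M/(\lambda-\omega)$ for real $\lambda>\omega$, and all Hille--Yosida powers are bounded uniformly. This is strictly stronger than the resolvent bound in the Corollary preceding \autoref{cor:Stokes Operator on L1}, which only yields $(1+|\lambda|)^{-(1-\varepsilon/2)}$. The key is to use the sharp decay $|\lambda|^{-1}$ as $\lambda\to\infty$.

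\textbf{Step 2 (localization and rescaling).} Fix $x_0\in\partial\Omega$ and flatten the boundary locally by a $\rC^{1,\alpha}$ chart with small Hölder constant. Take $f\in\rC^\infty_{\sigma,c}(\R^d_+)$ and set $f_\delta(x)=\delta^{-d}f((x-x_0)/\delta)$, transported through the chart. A Bogovskiĭ correction, whose $\rL^1$-cost vanishes as $\delta\to0$, restores exact solenoidality with zero normal trace. Let $u_\delta=R(\lambda\delta^{-2},-A)f_\delta$ and rescale it back to $\R^d_+$. The rescaled functions solve a Stokes resolvent problem at parameter $\lambda$ on dilated domains $\Omega_\delta$ that converge to $\R^d_+$, with coefficients perturbed by $O(\delta^\alpha)$. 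The $\rL^1$ norm is invariant under this scaling, so Step 1 gives $\|\lambda v_\delta\|_{\rL^1}\leqslant M'\|f\|_{\rL^1}$ uniformly in $\delta$. The same holds for all powers $\lambda^n R^n$.

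\textbf{Step 3 (passing to the limit).} Use the local $\rW^{2^-,1}$ and pressure bounds from the Corollary preceding \autoref{cor:Stokes Operator on L1}, together with their rescaled counterparts, and extract a subsequence converging weakly in $\rL^1_{\mathrm{loc}}$ or in distributions. The limit $v$ solves the half-space Stokes resolvent problem with $f$ on the right-hand side. Uniqueness of that problem in a suitable class, e.g. $v\in\rL^1+\rL^2$ via the $\rL^2$ theory, identifies $v=R(\lambda,-A_{\R^d_+})f$. Weak lower semicontinuity of the $\rL^1$ norm, in the form of tested duality against $\rC_c$, then gives Hille--Yosida bounds $\|(\lambda R(\lambda,-A_{\R^d_+}))^n f\|_{\rL^1}\leqslant M\|f\|_{\rL^1}$ for all $\lambda>0$, first for dense $f$ and then by density for all $f$. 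Thus the half-space Stokes operator would generate a bounded $\rC_0$-semigroup on $\rL^1_{\sigma,n}(\R^d_+)$. This contradicts \cite[Theorem 5.1]{DHP:01}, provided one checks that their obstruction is also incompatible with a non-analytic generation statement. If it is not, one should first show the Hille--Yosida bounds on a sector, using $\sigma(A)\subset(0,\infty)$ and the Corollary's sectorial resolvent, and so reduce to analytic generation.

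\textbf{Main obstacle.} The hardest part is Step 3: controlling the limit uniformly in $\rL^1$ despite the lack of a Leray projection. The risks are loss of mass at infinity or on the boundary, and the pressure, which is only in $\rW^{1^-,1}$ locally. I would first test the resolvent against $\varphi\in\rC^\infty_{\sigma,c}$, so the pressure drops out, and treat the domain perturbation as a small relatively bounded error using the smallness of the $\rC^{1,\alpha}$ constant.
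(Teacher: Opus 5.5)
Your architecture matches the paper's: contradiction, the large-$\lambda$ resolvent bound, blow-up at $x_0\in\partial\Omega$ with parameter $\mu r^{-2}$, a limit on $\R^d_+$, and a contradiction with \cite[Theorem~5.1]{DHP:01}. The gap is Step 3, which you flag as the main obstacle but do not close, and the tools you propose for it do not close it.

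A uniform $\rL^1$ bound only gives weak-$*$ compactness in measures, since bounded sets in $\rL^1$ are not weakly compact. Testing against $\varphi\in\rC^\infty_{\sigma,c}(\R^d_+)$ removes the pressure, but it also discards the no-slip condition. A limit obtained this way is only known to satisfy $\mu v-\Delta v-f\in\nabla\mathcal{D}'(\R^d_+)$ in the open half-space, with no control of its trace on $\{y_d=0\}$. So it is not identified with $R(\mu,-A_+)f$. Appealing to ``uniqueness in $\rL^1+\rL^2$ via the $\rL^2$ theory'' does not help, because you never show the limit lies in a class where the boundary condition makes sense and $\rL^2$-uniqueness applies. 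The Corollary's estimates use inhomogeneous norms with constants on the fixed domain $\Omega$. At best they yield local $\rL^1$-type bounds, whose limit would still require an $\rL^1$-based uniqueness theorem on $\R^d_+$ that you do not have.

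The missing idea is the domain-independent energy estimate. Since $h\in\rC^\infty_{\sigma,c}(\R^d_+)\subset\rL^2$, the rescaled solutions $v_r$ on $\Omega_r$ are also energy solutions. Testing with $\overline{v_r}$ gives $\mu\|v_r\|_{\rL^2}\leqslant\|h\|_{\rL^2}$ and $\sqrt{\mu}\|\nabla v_r\|_{\rL^2}\leqslant\|h\|_{\rL^2}$, uniformly in $r$. The zero extensions are then bounded in $\rH^1(\R^d)$. A weak limit vanishes on $\{y_d<0\}$ because $\Omega_r\to\R^d_+$ locally, so it lies in $\rH^1_0(\R^d_+)$ and is solenoidal. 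The weak formulation passes to the limit and extends by density to $\rH^{1,2}_{0,\sigma}(\R^d_+)$, so uniqueness of the energy solution gives $v=R(\mu,-A_+)h$. Rellich plus a diagonal extraction gives a.e. convergence, and Fatou then transfers the uniform $\rL^1$ bound to $v$.

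Several of your other ingredients are superfluous, and one is unjustified. Since $\operatorname{supp}h\Subset\R^d_+$, a rotation $\rQ$ and a dilation already give $f_r=r^{-d-2}\rQ h(\rQ^{\mathsf T}(\cdot-x_0)/r)\in\rC^\infty_{\sigma,c}(\Omega)$ for small $r$. The rescaled problem is then the exact Stokes resolvent problem on $\Omega_r=\{y: x_0+r\rQ y\in\Omega\}$. So you need no chart, no Bogovskii correction and no perturbed coefficients.

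You also need neither the Hille--Yosida powers nor the sectorial fallback. The fallback would not work anyway: $\rC_0$-generation together with the non-sharp bound $(1+|\lambda|)^{-(1-\varepsilon/2)}$ does not give the sharp sectorial estimate needed for analyticity. The single bound $\lambda\|R(\lambda,-A)\|\leqslant C$ for $\lambda\geqslant\lambda_0$ rescales to $\mu\|R(\mu,-A_+)h\|_{\rL^1}\leqslant C\|h\|_{\rL^1}$ for every $\mu>0$. After a density step, which identifies the $\rL^1$-limit through an $\rL^1\to\rL^q$ resolvent bound with $q>1$ close to $1$, this is exactly what \cite[Theorem~5.1]{DHP:01} excludes.
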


\begin{proof} We argue by contradiction. Since $-A$ generates a $\rC_0$-semigroup on $\rL^1_{\sigma,n}(\Omega)$, there exist constants $\lambda_0>0$ and $C>0$ such that
\begin{align*}
\left\|\lambda R(\lambda,-A)\right\|_{\mathcal{L}(\rL^1_{\sigma,n}(\Omega))}\leqslant C,\qquad \lambda\geqslant\lambda_0.
\end{align*}
Equivalently,
\begin{align}
\lambda\left\|R(\lambda,-A)f\right\|_{\rL^1(\Omega)}\leqslant C\left\|f\right\|_{\rL^1(\Omega)}\label{eq: Uniform Resolv Est L1 Proof by contradic Domain}
\end{align}
for every $f\in \rL^1_{\sigma,n}(\Omega)$ and every $\lambda\geqslant\lambda_0$. Our goal is to prove, for $\mu>0$, the estimate
\begin{align}\label{eq: Uniform Resolv Est L1 Proof by contradic Rn+}
    \mu\left\|R(\mu,-A_+)h\right\|_{\rL^1(\R^d_+)}\leqslant C\left\|h\right\|_{\rL^1(\R^d_+)}.
\end{align}
for all $h\in \rC^\infty_{\sigma,c}(\R^d_+)$, where  $A_+$ is the Stokes operator on $\R^d_+$, and derive a contradiction.

\textbf{Step 1:} We prove the half-space estimate for smooth data. Fix $\mu>0$ and such a vector field $h$. Choose $x_0\in\partial\Omega$. After applying an orthogonal transformation $\rQ$, we may assume that the tangent half-space to $\Omega$ at $x_0$ is identified with
\begin{align*}
\R^d_+:=\left\{y=(y',y_d)\in\R^d:y_d>0\right\}.
\end{align*}
For $r>0$, define $\Omega_r:=\left\{y\in\R^d:x_0+r\rQ y\in\Omega\right\}$. The $\rC^{1,\alpha}$ regularity of the boundary implies that $\Omega_r$ converges locally to $\R^d_+$ as $r\downarrow0$. More precisely, for every compact set $K\Subset\R^d_+$, there exists $r_K>0$ such that
\begin{align*}
K\subset\Omega_r,\qquad 0<r<r_K.
\end{align*}
Since $\operatorname{supp}h\Subset\R^d_+$, it follows that  $\operatorname{supp}h\subset\Omega_r$ for all sufficiently small $r>0$.

Define
\begin{align*}
f_r(x):=r^{-d-2}\rQ h\left(\frac{\rQ^\mathsf{T}(x-x_0)}{r}\right),\qquad x\in\Omega.
\end{align*}
For sufficiently small $r>0$, one has $
f_r\in \rC_{\sigma,c}^\infty(\Omega)\subset\rL^2_{\sigma,n}(\Omega)\subset\rL^1_{\sigma,n}(\Omega)$. A change of variables gives
\begin{align*}
\left\|f_r\right\|_{\rL^1(\Omega)}=r^{-2}\left\|h\right\|_{\rL^1(\R^d_+)}.
\end{align*}

Set $\lambda_r:=\frac{\mu}{r^2}$, for sufficiently small $r>0$, one has $\lambda_r\geqslant\lambda_0$. Define $u_r:=R(\lambda_r,-A)f_r$, so that the resolvent estimate on $\Omega$ \eqref{eq: Uniform Resolv Est L1 Proof by contradic Domain} gives
\begin{align*}
\lambda_r\left\|u_r\right\|_{\rL^1(\Omega)}\leqslant C\left\|f_r\right\|_{\rL^1(\Omega)}.
\end{align*}

We now reverse the dilation. Define $v_r(y):=r^d\rQ^\mathsf{T}u_r(x_0+r\rQ y),\qquad y\in\Omega_r$. If $\pi_r$ denotes the pressure associated with $u_r$, define $p_r(y):=r^{d+1}\pi_r(x_0+r\rQ y)$. Then $(v_r,p_r)$ satisfies
\begin{align*}
\left\{\begin{array}{rrl}
     \mu v_r-\Delta v_r+\nabla p_r&=h &\text{in }\Omega_r,\\
\operatorname{div}v_r&=0 &\text{in }\Omega_r,\\
v_r&=0 &\text{on }\partial\Omega_r.
\end{array}\right.
\end{align*}
The normalization in the definition of $v_r$ gives
\begin{align*}
\left\|v_r\right\|_{\rL^1(\Omega_r)}=\left\|u_r\right\|_{\rL^1(\Omega)}.
\end{align*}
Since $\mu=r^2\lambda_r$, it follows that
\begin{align*}
\mu\left\|v_r\right\|_{\rL^1(\Omega_r)}
=r^2\lambda_r\left\|u_r\right\|_{\rL^1(\Omega)}\leqslant Cr^2\left\|f_r\right\|_{\rL^1(\Omega)}=C\left\|h\right\|_{\rL^1(\R^d_+)}.
\end{align*}
Thus,
\begin{align}
\mu\left\|v_r\right\|_{\rL^1(\Omega_r)}\leqslant C\left\|h\right\|_{\rL^1(\R^d_+)},\label{By Contradiction uniform Rescaled L1  Resolvent Estimate}
\end{align}
where the constant $C$ is independent of $r$, $\mu$, and $h$. For smooth solenoidal data, since  $\Omega$ is bounded, $\rL^2\hookrightarrow\rL^1$, so that the $\rL^1$- and $\rL^2$-resolvents on $\Omega$ are consistent. Hence $v_r$ is also the unique energy solution of the above resolvent problem. Testing the equation against $v_r$ gives
\begin{align*}
\mu\left\|v_r\right\|_{\rL^2(\Omega_r)}^2+\left\|\nabla v_r\right\|_{\rL^2(\Omega_r)}^2=\operatorname{Re}\int_{\Omega_r}h\cdot\overline{v_r}.
\end{align*}
By the Cauchy--Schwarz inequality,
\begin{align*}
\operatorname{Re}\int_{\Omega_r}h\cdot\overline{v_r}\leqslant \left\|h\right\|_{\rL^2(\R^d_+)}\left\|v_r\right\|_{\rL^2(\Omega_r)}.
\end{align*}
Consequently,
\begin{align*}
\mu\left\|v_r\right\|_{\rL^2(\Omega_r)}\leqslant \left\|h\right\|_{\rL^2(\R^d_+)}\quad
\text{ and }\quad
\sqrt{\mu}\left\|\nabla v_r\right\|_{\rL^2(\Omega_r)}\leqslant \left\|h\right\|_{\rL^2(\R^d_+)}.
\end{align*}

Extend $v_r$ by zero to all of $\R^d$, and denote this extension by $\widetilde v_r$. Since $v_r\in \rH^1_0(\Omega_r;\C^d)$, one has $\widetilde{v_r}\in \rH^1(\R^d;\C^d)$, together with
\begin{align*}
\left\|\widetilde{v_r}\right\|_{\rL^2(\R^d)}=\left\|v_r\right\|_{\rL^2(\Omega_r)} \quad\text{ and }\quad
\left\|\nabla\widetilde v_r\right\|_{\rL^2(\R^d)}=\left\|\nabla v_r\right\|_{\rL^2(\Omega_r)}.
\end{align*}
Thus, for fixed $\mu$ and $h$, the family $(\widetilde v_r)_r$ is bounded in $\rH^1(\R^d;\C^d)$. After passing to a subsequence, there exists $\widetilde v\in \rH^1(\R^d;\C^d)$ such that
\begin{align*}
\widetilde v_r\rightharpoonup\widetilde v\qquad\text{weakly in }\rH^1(\R^d;\C^d).
\end{align*}
For every $R>0$, the Rellich compactness theorem on the fixed ball $\rB_R$ yields
\begin{align*}
\widetilde v_r\longrightarrow\widetilde v\qquad\text{strongly in }\rL^2(\rB_R;\C^d).
\end{align*}
After a diagonal extraction, we may also assume that
\begin{align*}
\widetilde v_r(y)\longrightarrow\widetilde v(y),\qquad\text{ for almost every $y\in\R^d$}.
\end{align*}

Let $K\Subset\{y_d<0\}$. The local convergence $\Omega_r\to\R^d_+$ implies that $K\cap\Omega_r=\varnothing$ for all sufficiently small $r>0$. Hence, $\widetilde v=0$ almost everywhere in $\{y_d<0\}$. It follows from the characterization by zero extension that
\begin{align*}
v:=\widetilde v|_{\R^d_+}\in \rH^1_0(\R^d_+;\C^d).
\end{align*}
Moreover, since the zero extensions $\widetilde v_r$ are distributionally divergence-free, one has
\begin{align*}
\operatorname{div}v=0\qquad\text{in }\mathcal D'(\R^d_+).
\end{align*}

Let $\varphi\in \rC^\infty_{\sigma,c}(\R^d_+)$. For all sufficiently small $r>0$, one has $\operatorname{supp}\varphi\subset\Omega_r$. The weak formulation for $v_r$ gives
\begin{align*}
\mu\int_{\Omega_r}v_r\cdot\overline{\varphi}+\int_{\Omega_r}\nabla v_r:\nabla\overline{\varphi}=\int_{\R^d_+}h\cdot\overline{\varphi}.
\end{align*}
Passing to the limit yields
\begin{align*}
\mu\int_{\R^d_+}v\cdot\overline{\varphi}+\int_{\R^d_+}\nabla v:\nabla\overline{\varphi}=\int_{\R^d_+}h\cdot\overline{\varphi}.
\end{align*}
By density, this identity holds for every $\varphi\in \rH^{1,2}_{0,\sigma}(\R^d_+)$. The uniqueness of the energy solution therefore implies that $v=R(\mu,-A_+)h$. Now, using Fatou's lemma and the uniform $\rL^1$ estimate for $v_r$ \eqref{By Contradiction uniform Rescaled L1  Resolvent Estimate}, we obtain
\begin{align*}
\mu\left\|R(\mu,-A_+)h\right\|_{\rL^1(\R^d_+)}=\mu\left\|v\right\|_{\rL^1(\R^d_+)}\leqslant\liminf_{r\downarrow0}\mu\left\|v_r\right\|_{\rL^1(\Omega_r)}\leqslant C\left\|h\right\|_{\rL^1(\R^d_+)}.
\end{align*}
We have therefore proved
\begin{align*}
\mu\left\|R(\mu,-A_+)h\right\|_{\rL^1(\R^d_+)}\leqslant C\left\|h\right\|_{\rL^1(\R^d_+)}
\end{align*}
for every $h\in \rC^\infty_{\sigma,c}(\R^d_+)$. 

\textbf{Step 2:} We provide a contradiction. It remains to extend this estimate to arbitrary $h\in\rL^1_{\sigma,n}(\R^d_+)$. We recall that  $\rL^{1}_{\sigma,n}(\R^d_+)=\overline{\rC^\infty_{\sigma,c}(\R^d_+)}^{\lVert\cdot\rVert_{\rL^{1}(\R^d_+)}}$, see \cite[Chap.~4]{BG:25}. Choose $(h_k)_{k\in\N}\subset \rC^\infty_{\sigma,c}(\R^d_+)$ such that
\begin{align*}
\left\|h_k-h\right\|_{\rL^1(\R^d_+)}\xrightarrow[k\to+\infty]{}0.
\end{align*}
Applying the preceding estimate to $h_k-h_\ell$ gives
\begin{align*}
\mu\left\|R(\mu,-A_+)h_k-R(\mu,-A_+)h_\ell\right\|_{\rL^1(\R^d_+)}\leqslant C\left\|h_k-h_\ell\right\|_{\rL^1(\R^d_+)}.
\end{align*}
Thus $\big(R(\mu,-A_+)h_k\big)_{k\in\mathbb N}$ is a Cauchy sequence in $\rL^1(\R^d_+;\C^d)$. To identify its limit with the canonical half-space resolvent, choose $q>1$ sufficiently close to $1$ so that
\begin{align*}
\frac{d}{2}\left(1-\frac{1}{q}\right)<1.
\end{align*}
By Sobolev embeddings, the canonical half-space resolvent estimates, see for instance \cite[Theorem~5.2]{BG:25} combined with \cite{GMS:99}, imply
\begin{align*}
\left\|R(\mu,-A_+)g\right\|_{\rL^q(\R^d_+)}\leqslant C_{\mu,q}\left\|g\right\|_{\rL^1(\R^d_+)},\quad g\in\rL^1_{\sigma,n}(\R^d_+).
\end{align*}
Consequently,
\begin{align*}
R(\mu,-A_+)h_k\xrightarrow[k\to+\infty]{}R(\mu,-A_+)h\qquad\text{in }\rL^q(\R^d_+;\C^d).
\end{align*}
On the other hand, the same sequence converges in $\rL^1(\R^d_+;\C^d)$ to some vector field $u$. Both convergences imply convergence in $\mathcal D'(\R^d_+;\C^d)$, and therefore 
$u=R(\mu,-A_+)h$. Passing to the limit in the estimate for $h_k$ gives
\begin{align*}
\mu\left\|R(\mu,-A_+)h\right\|_{\rL^1(\R^d_+)}\leqslant C\left\|h\right\|_{\rL^1(\R^d_+)}.
\end{align*}
Since $\mu>0$ and $h\in\rL^1_{\sigma,n}(\R^d_+)$ were arbitrary, the proof is complete since it contradicts \cite[Theorem~5.1]{DHP:01}.
\end{proof}

\subsection*{Acknowledgments}

TB acknowledges the support of the DFG Walter-Benjamin Fellowship no. 538212014.
Part of this work was carried out during TB's visit to Princeton University. TB would like to thank Princeton University for its warm hospitality.
The authors completed part of this work during a joint visit to Oberwolfach and would like to thank the MFO for its warm hospitality.


\end{document}